\definecolor{azure}{rgb}{0.0, 0.5, 1.0}
\definecolor{awesome}{rgb}{1.0, 0.13, 0.32}
\tikzset{
commutative diagrams/.cd,
arrow style=tikz,
diagrams={>=latex}} 
      \string\usetikzlibrary{decorations.markings} to use arrows with markings}{}}{}%
\setlist[itemize]{noitemsep, nolistsep}
\setlist[enumerate]{noitemsep, nolistsep}
\newtheorem{thm}{Theorem}[section]
\newtheorem{prop}[thm]{Proposition}
\newtheorem{lm}[thm]{Lemma}
\newtheorem{cor}[thm]{Corollary}
\theoremstyle{definition}
\newtheorem{defn}[thm]{Definition}
\newtheorem{ex}[thm]{Example}
\theoremstyle{remark}
\newtheorem{rke}[thm]{Remark}
\newcommand{\p}{\mathfrak{p}}
\newcommand{\m}{\mathfrak{m}}
\newcommand{\ie}{\emph{i.e.,} }
\newcommand{\eg}{\emph{e.g.,} }
\newcommand{\cf}{\emph{cf.} }
\newcommand{\K}{\mathbb{K}}
\newcommand{\R}{\mathbb{R}}
\newcommand{\C}{\mathbb{C}}
\newcommand{\Q}{\mathbb{Q}}
\newcommand{\Z}{\mathbb{Z}}
\newcommand{\N}{\mathbb{N}}
\newcommand{\Pj}{\mathbb{P}}
\renewcommand{\mod}{\:\mathrm{mod}\:}
\newcommand{\Or}{\mathcal{O}}
\newcommand{\A}{\mathbb{A}}
\newcommand{\ord}{\mathrm{ord}}
\newcommand{\gr}{\mathrm{gr}}
\newcommand{\Gr}{\mathrm{Gr}}
\newcommand{\init}{\mathrm{in}}
\newcommand{\Init}{\mathrm{In}}
\newcommand{\IN}{\mathrm{IN}}
\title{Fibers of phase tropicalizations}
\author{Andrei Bengu\textcommabelow s-Lasnier and Mikhail Shkolnikov
}
\begin{document}

\pagestyle{fancy}
\fancyhf{}
\fancyhead[CE]{\textsc{Andrei Bengu\textcommabelow s-Lasnier and Mikhail Shkolnikov}}
\fancyhead[CO]{\textsc{Fibers of phase tropicalizations}}
\fancyhead[LE,RO]{\thepage}


\address{Andrei Bengu\textcommabelow s-Lasnier\\
Institute of Mathematics and Informatics\\
Bulgarian Academy of Sciences\\
Akad. G. Bonchev, Sofia 1113, Bulgaria \\
email:abengus@math.bas.bg}

\address{Mikhail Shkolnikov\\
Institute of Mathematics and Informatics\\
Bulgarian Academy of Sciences\\
Akad. G. Bonchev, Sofia 1113, Bulgaria \\
email:m.shkolnikov@math.bas.bg}

\begin{abstract}
The subject of the present paper is phase tropicalization, which was used crucially
in the context of Mikhalkin's correspondence theorem for curve counting in the complex
coefficient case. The subject can be traced back to Viro's patchworking for constructing
topological types of real algebraic curves. These two instances correspond to complex
and real phases. Both fall into the category of what can be called "abelian" or
classical tropicalization, referring to degenerations of varieties within an algebraic
torus (or its compactification). In contrast, in "non-abelian" tropicalizations
the ambient torus is replaced by a non-commutative group such as the special linear
group. This is the beginning of a general theory valid for a wide array of coefficient systems
and dimensions. As an application, the paper settles the question of phase tropicalization for
the special linear group $\mathrm{SL}_2$. It also gives an algebraic explanation and phase
extension of the case of curves, previously studied in the purely geometric framework.
To accomplish these tasks we introduce valuative tools that allow us to prove an affine version
of Kapranov's theorem on tropical hypersurfaces and its generalization to arbitrary tropical
varieties. Most notably, we show the functorial properties of the graded ring of a valuation and
exhibit the polynomial structure of the graded ring of monomial valuations.
\end{abstract}

\subjclass[2010]{13A18, 14T10, 14T20, 14T90, 14L35}
\keywords{phase tropicalization, non-archimedean tropicalization, Gröbner bases,
monomial valuations, graded algebra of a valued ring, initial forms.}

\maketitle

\tableofcontents

\section{Introduction}\label{sec:intro}

Kapranov's tropical geometric theorem is a foundational result relating tropicalizations
of toric varieties to their defining ideals (see for instance \cite[3.1.3 and 3.2.3]{MS}).
In fact the theorem is stronger and gives the defining ideal of phase tropicalizations.
In this paper we extend these results to affine varieties in the following way. Consider an
algebraic variety $X$ defined over the field of (reversed) Hahn series $(\K,\nu)$, where $\nu$
is its associated valuation. Elements in this field can be written as $ct^\alpha+o(t^\alpha)$,
where $c\in\C,\:\alpha\in\R$ and the term $o(t^\alpha)$ designates lower order terms. Any vector
$A\in\K^n\setminus\{0\}$ can be written as $t^\alpha B+o(t^\alpha),\:B\in\C^n\setminus\{0\}$ by
taking the entries of highest order. Define $\Init_\nu$ as the map that associates to this vector
its leading part $(\alpha, B)$. Our main result states that these leading terms form explicit
algebraic sets.

For any $f=\sum_uc_ux^u\in\K[x_1,\ldots,x_n]$, set $c_u=\lambda_ut^{\alpha_u}+o(t^{\alpha_u})$ and
write $\nu_\alpha(f):=\max\{\nu(c_u)+\alpha|u|\::\:c_u\neq0\}$, where
$|(u_1,\ldots,u_n)|=u_1+\cdots+u_n$. Define then
\[\IN_\alpha(f):=\sum_{\nu(c_u)+\alpha|u|=\nu_\alpha(f)}\lambda_uX^u\in\C[X_1,\ldots,X_n].\]
For any ideal $I$ in $\K[x_1,\ldots,x_n]$, let $\IN_\alpha(I)$ be the ideal in
$\C[X_1,\ldots,X_n]$ generated by the $\IN_\alpha(f),\,f\in I$. Recall that the disjoint union
of a family of sets $(S_j)_{j\in J}$ is constructed as $\bigsqcup_{j\in J}S_j:=
\bigcup_{j\in J}\{j\}\times S_j$. The following result is the main contribution of this paper.

\begin{thm}\label{thm:main}
Consider an algebraic variety $X\subset\K^n\setminus\{0\}$, given by an ideal
$I\subset\K[x_1,\ldots,x_n]$.
\begin{enumerate}
\item The set of initial parts can be described as a fibration of algebraic sets
\[\Init_\nu(X)=\bigsqcup_{\alpha\in\R}\mathbb{V}(\IN_\alpha(I))\setminus\{0\}.\]
\item There exists a set of critical values $\beta_0<\ldots<\beta_r$ such that, for
$i=0,\ldots,r+1$ the function $\alpha\in(\beta_{i-1},\beta_i)\mapsto\IN_\alpha(I)$ is
constant and gives a homogeneous ideal. The ideals $\IN_{\beta_i}(I)$ are not homogeneous
(we set $\beta_{-1}=-\infty$ and $\beta_{r+1}=+\infty$).
\end{enumerate}
\end{thm}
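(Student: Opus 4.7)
The plan is to prove the two parts separately. Part (1) splits naturally into an elementary direction (any element of $X$ has its initial part in $\mathbb{V}(\IN_\alpha(I))$) and a deeper realizability direction which is the affine version of Kapranov's theorem. Part (2) will follow from a piecewise-linear analysis of how $\IN_\alpha(f)$ depends on $\alpha$, combined with a finite basis argument.

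For the easy inclusion in (1), I take $A \in X$ with $\Init_\nu(A) = (\alpha, B)$, so $A = t^\alpha B + o(t^\alpha)$, and substitute into $f = \sum_u c_u x^u \in I$. With $c_u = \lambda_u t^{\alpha_u} + o(t^{\alpha_u})$, the monomial $c_u A^u$ has $t$-order $\alpha_u + \alpha|u|$ with leading coefficient $\lambda_u B^u$. Collecting the terms achieving the maximum $\nu_\alpha(f)$ yields
\[f(A) = t^{\nu_\alpha(f)}\,\IN_\alpha(f)(B) + o(t^{\nu_\alpha(f)}).\]
Since $f(A) = 0$, the top coefficient $\IN_\alpha(f)(B)$ vanishes, hence $B \in \mathbb{V}(\IN_\alpha(I))$. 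The converse is the main obstacle: given $(\alpha, B)$ with $\IN_\alpha(f)(B) = 0$ for every $f \in I$, one must produce an actual $A \in X$ whose leading part is $(\alpha, B)$. My strategy is to identify $\IN_\alpha(I)$ with the initial ideal of $I$ inside the graded ring of the monomial valuation $\nu_\alpha$ — this is where the functoriality of the graded-ring construction and the polynomial structure of the graded ring of a monomial valuation (both highlighted in the abstract) enter. Once this identification is in place, one constructs the coefficients of $A - t^\alpha B$ by a Newton-type successive approximation in $t$, solving order-by-order so that every $f \in I$ keeps vanishing to higher and higher $t$-order.

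For part (2), I first analyze a single $f$: the function $\alpha \mapsto \nu_\alpha(f)$ is the pointwise maximum of the finitely many affine functions $\alpha \mapsto \alpha_u + \alpha|u|$, hence piecewise linear with finitely many breakpoints. On the interior of any linear piece, only monomials of a single degree $|u|$ achieve the max, so $\IN_\alpha(f)$ is homogeneous; at a breakpoint two slopes tie, forcing $\IN_\alpha(f)$ to contain monomials of two distinct degrees and thereby breaking homogeneity. To promote this to the ideal, I invoke a finite tropical basis $\{f_1, \ldots, f_m\}$ for which $\IN_\alpha(I) = \langle \IN_\alpha(f_i)\rangle$ for every $\alpha$; its existence is part of the valuative package developed earlier in the paper. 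The finite union of breakpoints of the $f_i$'s is then a finite set of candidate critical values, and outside these each $\IN_\alpha(f_i)$ is constant and homogeneous, hence so is $\IN_\alpha(I)$. Finally, defining $\{\beta_0 < \cdots < \beta_r\}$ to be those candidates at which $\IN_\alpha(I)$ genuinely fails to be homogeneous — and verifying, again via the graded-ring interpretation, that crossing a discarded candidate does not alter the ideal — yields the stated structure.
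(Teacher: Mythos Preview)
Your easy inclusion in Part~(1) and all of Part~(2) follow the paper's route. One refinement for Part~(2): the paper does not assume the ``finite tropical basis'' exists from earlier machinery; it manufactures one inside the proof by homogenizing $I$ to $I^{\mathrm{hom}}\subset\K[x_0,x_1,\ldots,x_n]$, taking a universal Gr\"obner basis $\{G_1,\ldots,G_s\}$ of $I^{\mathrm{hom}}$ (available for homogeneous ideals via \cite[Cor.~2.5.11]{MS}), and dehomogenizing to $g_j=G_j|_{x_0=1}$. One then has $\IN_{\underline\alpha}(I)=\langle\IN_{\underline\alpha}(g_1),\ldots,\IN_{\underline\alpha}(g_s)\rangle$ for every $\alpha$, and your breakpoint analysis applies verbatim to the $g_j$.

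The genuine gap is the hard inclusion in Part~(1). A Newton-type order-by-order lift is fine for a single hypersurface equation, but for an arbitrary ideal there is no mechanism guaranteeing that the correction at each order can be chosen to kill \emph{all} $f\in I$ simultaneously; that would require a smoothness or Jacobian condition at $B$, which is not assumed and need not hold. The paper's argument is quite different. It first reduces to $I$ prime (replacing $I$ by $\sqrt{I}$ and then by each associated prime, using \cref{lm:INIT} to see that $\mathbb{V}_\alpha(\IN_{\underline\alpha}(-))$ is unchanged). Given $\theta\in\mathbb{V}_\alpha(\IN_{\underline\alpha}(I))$ with some zero entries, it applies a triangular $\Or_\nu$-linear change of coordinates (sending $x_j\mapsto x_j+x_1$ on the zero slots) to move $\theta$ to a phase with all entries nonzero; the functoriality of $\gr$ and $\Gr$---this is where the graded-ring package is actually deployed---shows that both $\Init_\nu(X)_\alpha$ and $\mathbb{V}_\alpha(\IN_{\underline\alpha}(I))$ transform compatibly under this change. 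After the change one is inside the torus $(\K^\times)^n$, and the lift is supplied directly by the classical Kapranov theorem \cite[Prop.~3.2.11]{MS}. The paper does sketch, in an appendix, an alternative proof that uses a hypersurface Newton step, but only after a refined Noether normalization collapsing the prime case onto a hypersurface---a reduction your proposal does not contain.
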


Our main application of this result concerns $\mathrm{SL}_2$ (or $\mathrm{PSL}_2$)
tropicalizations as they were introduced in \cite{MS22}. Simplifying, they associate
to a family of varieties $X=(X_t)_{t>0}$ with $X_t\subset\mathrm{PSL}_2(\C)$ a limit degeneration
that we denote $\mathrm{VAL}(X)\subset\mathrm{PSL}_2(\C)$. Consider the polar decomposition of
$\mathrm{PSL}_2(\C)\simeq\mathbb{H}^3\times\mathrm{PSU}(2)$. The set $\mathbb{H}^3$ of positive
definite, Hermitian and unimodular matrices is a (Hermitian) model of the hyperbolic space.
We write $O$ for its center, which is the identity matrix. The natural metric gives rise to a
distance function that can be calculated explicitly: $\mathrm{dist}(O,B)=|\ln(\lambda(B))|$,
where $\lambda(B)$ is one of the eigenvalues of $B$. Focusing on the Hermitian parts
of $\mathrm{VAL}(X)$, we obtain the hyperbolic tropicalization of $X$. The authors of \cite{MS22}
obtain that the hyperbolic tropicalizations of families of complex algebraic curves are unions of
concentric spheres around $O$ and segments extending to geodesics passing through $O$. In
\cite{PS25}, it is shown that the hyperbolic tropicalization of a family of surfaces $(X_t)_{t>0}$
in $\mathrm{PSL}_2(\C)$ is a complement to an open ball in $\mathbb{H}^3$ centered at the image of
the identity matrix denoted by $O$.

In this work, \cref{thm:main} completes \cite[Thm. 3, 4]{SP25}, thus proving an
$\mathrm{SL}_2$ version of Kapranov's theorem at the algebraic level for complex surfaces, and
reprove and refine the analogous theorem for curves. Write $\widetilde{\mathrm{VAL}}(X)$ for the
$\mathrm{SL}_2$ tropicalization of $X$. When restricting our varieties to convergent Hahn series,
one can still see $X$ as a family of varieties
\footnote{More specifically, suppose $X$ is given by equations $X=\mathbb{V}(f_1,\ldots,f_s)$.
Each polynomial $f_j$ has coefficients in $\K$, thus they are functions in $t$, so we
can write $f_j=f_j(t)$. We can thus define the family $(X_t)_{t>0}$ by
$X_t=\mathbb{V}(f_1(t),\ldots,f_s(t))$.}. In this situation, the critical levels
correspond to the radii of the concentric spheres in $\mathbb{H}^3$ of the hyperbolic
tropicalization.

To help in describing the hyperbolic degenerations for surfaces we upgrade our hyperbolic picture
to a double hyperbolic one. More precisely, consider the two polar decompositions of a matrix
$C\in\mathrm{SL}_2(\C)$, $C=PU$ and its dual $B=U'P'$ with $P^*=P$ positive definite, unimodular,
$UU^*=U^*U=1$ and likewise for $P',U'$. Then associate to $C$, its two Hermitian parts $(P,P')$
and denote it $\widehat{\varkappa}(C)$. We prove the following in \cref{sec:layered}.

\begin{thm}\label{thm:surfaces}
Consider a surface $X\subset\mathrm{SL}_2(\K)$ and let $\beta_0<\beta_1<\dots<\beta_r$ be its
critical levels. Then the double-hyperbolic tropicalization is
\[\widehat{\varkappa}(\widetilde{\mathrm{VAL}}(X))=\bigcup_{i=0}^r\{\beta_i\}\times Q_2(\mathbb{C})
\cup\bigcup_{k=0}^r(\beta_i,\beta_{i+1})\times C_k,\]
where $\beta_{r+1}=+\infty$ and $C_0,\dots,C_r$ are complex algebraic curves on
$Q_2(\C)$ of symmetric bi-degree increasing with $i$.
\end{thm}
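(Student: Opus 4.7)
The plan is to reduce the statement to \cref{thm:main} by viewing the surface $X\subset\mathrm{SL}_2(\K)$ as a subvariety of $\K^4\setminus\{0\}$ cut out by the ideal $I$ together with the polynomial relation $\det=1$. The theorem then supplies critical values $\beta_0<\cdots<\beta_r$ and a stratification $\Init_\nu(X)=\bigsqcup_{\alpha\in\R}\mathbb{V}(\IN_\alpha(I))\setminus\{0\}$, with $\IN_\alpha(I)$ being a fixed homogeneous ideal on each open interval $(\beta_i,\beta_{i+1})$ and failing to be homogeneous at the critical values. By Theorems 3 and 4 of \cite{SP25} (refined by \cref{thm:main}), the $\mathrm{SL}_2$ tropicalization $\widetilde{\mathrm{VAL}}(X)$ is read off from these initial strata, and the double-hyperbolic map $\widehat{\varkappa}$ factors through the polynomial assignment $C\mapsto(CC^*,C^*C)$. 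The task is thus to compute the image under $\widehat{\varkappa}$ of each layer $\mathbb{V}(\IN_\alpha(I))\setminus\{0\}$.

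For a non-critical layer $\alpha\in(\beta_i,\beta_{i+1})$, item (ii) of \cref{thm:main} makes $\mathbb{V}(\IN_\alpha(I))$ a cone in $\C^4$, independent of $\alpha$ in the interval. Projectivizing and intersecting with the projective quadric $\{\det=0\}\subset\mathbb{P}^3$ produces a projective curve that maps under $\widehat{\varkappa}$ to a complex algebraic curve $C_i\subset Q_2(\C)$. The symmetric bi-degree of $C_i$ reflects the involution $C\mapsto C^*$, which preserves $\widetilde{\mathrm{VAL}}(X)$ and swaps the two rulings of $Q_2(\C)\simeq\mathbb{P}^1\times\mathbb{P}^1$; monotonicity in $i$ follows from a Gröbner-type flatness argument that tracks how crossing a critical level $\beta_i$ can only add top-degree components to the initial ideal. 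Homogeneity guarantees independence of $\alpha$ in the open interval, giving the cylindrical stratum $(\beta_i,\beta_{i+1})\times C_i$.

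At a critical level $\alpha=\beta_i$, the ideal $\IN_{\beta_i}(I)$ is inhomogeneous. Extracting the two extreme homogeneous pieces of its generators recovers the ideals of $C_{i-1}$ and $C_i$, but the genuinely mixed-degree relations introduce further directions. The claim to establish is that, combined with $\det=0$, these mixed relations force $\mathbb{V}(\IN_{\beta_i}(I))$ to project onto the entire quadric, so that $\widehat{\varkappa}$ produces all of $Q_2(\C)$. I would carry this out by a dimension count: a surface in $\mathrm{SL}_2$ imposes at most one condition on the three-dimensional ambient group, while the failure of homogeneity ensures that the critical fiber is not itself a cone, and these two facts together preclude the image from being a proper subvariety of $Q_2(\C)$.

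The main obstacle is this last step. Between critical levels the analysis is a routine homogeneous reduction, but at a critical level one must exploit the substantive content of inhomogeneity supplied by item (ii) of \cref{thm:main} and show that it translates into full-quadric behaviour after the polar-decomposition map. Once the per-layer pictures are in place, gluing them together is straightforward: part (ii) of \cref{thm:main} provides the rigidity needed to see that $C_{i-1}$ and $C_i$ are genuinely distinct curves, and the decomposition in the statement follows by taking the union over all $\alpha\in\R$.
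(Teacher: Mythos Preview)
Your overall architecture matches the paper's: pull back to the valuative tropicalization $\Init_\nu(X)\subset\R\times\C^4$, use \cref{thm:main} (really \cref{thm:phase_trop} and \cref{prop:crit_values}) to get the layered structure, observe that on each open interval $(\beta_i,\beta_{i+1})$ the homogeneous ideal cuts a cone over a curve in $Q_2(\C)$, and at a critical level the inhomogeneous $2$-dimensional variety in the $\C^*$-bundle $\{\det=0\}\to Q_2(\C)$ must dominate $Q_2(\C)$. Your dimension count for the last point is exactly the paper's argument, phrased slightly differently (the paper cites \cref{rke:dimension} for $\dim\mathbb{V}(\IN_{\underline{\beta_i}}(I))=2$ and notes that an inhomogeneous surface cannot sit in a single $\C^*$-fiber).

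The gap is in the symmetric bi-degree and monotonicity claims. Your involution argument does not work: there is no reason for $X$, or for $\mathbb{V}(\IN_{\underline\alpha}(I))$, to be closed under $B\mapsto B^*$, so you cannot conclude that $C_i$ is invariant under swapping the rulings of $Q_2(\C)\cong\mathbb{P}^1\times\mathbb{P}^1$. The paper instead invokes Klein's theorem: any surface in the smooth quadric $\mathrm{SL}_2$ is a hypersurface section, so $X=\mathbb{V}(f,\det-1)$ for a \emph{single} extra polynomial $f$. Then on each open interval $\tilde C_i$ is cut out in $\{\det=0\}$ by one homogeneous polynomial $f_{d_i}$ of degree $d_i$, and a degree-$d$ hypersurface section of $Q_2\subset\mathbb{P}^3$ automatically has bi-degree $(d,d)$. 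Monotonicity then comes for free from the shape of the tropical polynomial of $f$: the degrees $d_0\leqslant d_1\leqslant\cdots$ are the slopes of the piecewise-linear function $\alpha\mapsto\mathrm{Trop}(f)\cdot\alpha$, which are non-decreasing. Your ``Gr\"obner-type flatness'' gesture is in the right spirit but, without the single-equation reduction, you have no handle on the bi-degree of the generators of $\IN_{\underline\alpha}(I)$.
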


To summarize, this work settles the question of how  phase tropical limits of surfaces in
$\mathrm{SL}_2(\C)$ may look like in general. In particular we resolve the previous issue
of showing that inclusions in Theorems 3 and 4 of \cite{SP25} are actually equalities.
This opens the door to extending the general principle that phase tropicalizations restore
the topology of the initial variety. The first instance of this idea was Viro's patchworking
\cite{V83}, which, in essence, is a real phase toric tropicalization. The second instance
was accomplished in theorems for complex phases, of Kerr and Zharkov \cite{KZ18},
and of Kim and Nisse \cite{KN21}. The $\mathrm{SL}_2$ versions of these facts are subjects
of current investigation.

We reiterate that what follows is building foundations of a more general framework
which may be referred to as {\it affine initial forms}, inspired by and applied to
$\mathrm{SL}_2$ phase tropicalization, but not limited to it. The closest set of examples
are affine quadrics as ambient spaces with the first relevant instance being akin to
Brugallé's conic floor diagrams \cite{B15}. Nevertheless in order to generalize the above
statements to other groups, one would be required to go beyond the first-order term in the
asymptotic expansion (see \cref{ex:counter-ex-dim3}). This necessitates an adequate analytic,
geometric, representation, and valuation theoretic sophistication that has yet to be developed.

\begin{rke}
After the first version of the article appeared on arXiv, a similar generalization of Kapranov's
theorem and of the fundamental theorem of tropical geometry has been communicated to us.
Indeed in \cite{MaSm}, tropical varieties are defined via hyperfield arithmetic and it is proven
that a valuation is a particular type of hyperfield morphism. Their notion of fine tropical
variety appears to be related to our sets $\Init_\nu(\mathbb{V}(I))$. However, at the moment of
writing, it is unclear to us how to precisely relate these two notions and thus how to recover
our \cref{thm:main} from the results of Maxwell and Smith.
\end{rke}

The paper is organized as follows. In \cref{sec:fibers} we detail different types of
tropicalizations that help clarify the general picture of the hyperbolic degenerations.
In \cref{sec:prelim} and \cref{sec:composing_init} we introduce the elements of valuation
that are needed for the proof of \cref{thm:main}. Notably we make extensive use of
the graded algebra associated to a valuative pair. Additionally, we introduce a similar
construction, that we name phase space. This is the structure inside which the initial terms
of vectors live. It is more convenient for our proofs and future applications to establish
the formalism in full generality. One important property of graded algebras is functoriality.
Roughly speaking, functoriality allows one to make a change of variables. It is the key
ingredient in our arguments towards the Kapranov-like theorem. Our proof is divided between
\cref{sec:lifting} and \cref{sec:layered}. More precisely, the first point of \cref{thm:main}
is shown in \cref{thm:phase_trop} and the second is contained in \cref{prop:crit_values}.
We sketch an alternative proof to \cref{thm:phase_trop} in \cref{app:alt_proof} and in
\cref{app:diffeo} we detail the proof of \eqref{eq:diffeo} and show how it defines a diffeomorphism
between $\mathrm{SL}_2(\C)$ and its non-abelian tropicalization.

\textbf{Funding.} This work has been supported by a "Peter Beron i NIE"
fellowship [KP-06-DB-5] from the Bulgarian Science Fund for the first author, and by
the Simons Foundation, grant [SFI-MPS-T-Institutes-00007697],
the Ministry of Education and Science of the Republic of Bulgaria,
grant [DO1-239/10.12.2024], as well as by the National Science Fund,
The Ministry of Education and Science of the Republic of Bulgaria, under contract [KP-06-N92/2]
for the second author.

\textbf{Acknowledgments.} We would like to extend our gratitude to Peter Petrov, Grigory Mikhalkin
and Ilia Zharkov for proposing to us this project, their ample support and fruitful discussions.
The second author gratefully acknowledges the hospitality of IMPA (Rio de Janeiro),
where this work was completed during his visit.

\textbf{Conflict of interest.} The authors have no conflicts of interest to declare.

\section{Fibers of \texorpdfstring{$\mathrm{SL}_2$}{SL2}
phase tropicalization}\label{sec:fibers}

Before diving into the principal algebraic narrative of this article, we would like to detail
our main geometric application. We start by defining the $\mathrm{SL}_2$
degenerations. Consider a family of matrices $(A_t)_{t>0},\:A_t\in\mathrm{SL}_2(\C)$. We see it
as a function $\R_{>0}\to\mathrm{SL}_2(\C)$, and we assume it has a first asymptotic
$t^\alpha B+o(t^\alpha),\:B\in\mathrm{Mat}_2(\C)$ for large $t$. For $h>0$ fix the operator
$R_h:\mathrm{SL}_2(\C)\to\mathrm{SL}_2(\C)$ that acts in the following way: for
$A\in\mathrm{SL}_2(\C)$ consider its polar decomposition $A=PU$ where $P=P^*$ and
$UU^*=U^*U=1$; then $R_h(A)=P^hU$. Our geometric degeneration of $(A_t)_t$ is the limit
\[\lim_{t\to+\infty}R_{\log(t)^{-1}}(A_t)\in\mathrm{SL}_2(\C).\]
We write this limit as $\widetilde{\mathrm{VAL}}(A)$. For a family of varieties $(X_t)_{t>0},\:
X_t\subset\mathrm{SL}_2(\C)$ one defines its $\mathrm{SL}_2$ \textit{tropicalization} as
$\widetilde{\mathrm{VAL}}(X)=\{\lim_{t\to\infty}R_{\log(t)^{-1}}(A_t):\:A_t\in X_t,\:t>0\}$.

In \cref{app:diffeo} we prove the following formula.
\begin{equation}\label{eq:diffeo}
\lim_{t\to+\infty}R_{\log(t)^{-1}}(A_t)=[\![e^\alpha B+e^{-\alpha}(B^*)^{adj}]\!],
\end{equation}
where $[\![C]\!]=(\det(C))^{-1/2}C\in\mathrm{SL}_2(\C)$ is the unique normalization assuming
$\det(C)\in\mathbb{R}_{>0}$. A similar formula for $\mathrm{PSL}_2(\C)$ appears first in
\cite{SP24}, with the proof sketched in \cite{SP25}. This expression is hard to work with directly,
nevertheless one first remarks that it only depends on the first asymptotics of the entries in
$(A_t)_t$. It is thus well-defined, regardless of whether the entries of $(A_t)_t$ are convergent
Hahn series or general formal power series. Thus, instead of considering families of points
$(A_t)_t$ or families of varieties $(X_t)_t$, where the entries of the matrices or coefficients
of the defining polynomials vary with a parameter $t$, one can consider points and varieties with
entries and coefficients in a valued field $\K$. The typical prototype for $\K$ is the field of
Hahn series, converging or not. Thus from now on, we assume $A\in\mathrm{SL}_2(\K)$ and
$X\subset\mathrm{SL}_2(\K)$ is a subvariety. We can still define $\widetilde{\mathrm{VAL}}(X)$
via the right-hand side expression of \eqref{eq:diffeo}.

We give a geometric picture of this process.
Let $\mathcal{S}$ denote the total space of a circle bundle over the quadric surface
$Q_2(\C)=\{[B]_{\C^*}:\:\det(B)=0\}\subset\mathbb{CP}^3$, with the fiber over a point
$[B]_{\C^*}$ being $\{[cB]_{\mathbb{R}_{>0}}:c\in U(1)\}$. In \cref{app:diffeo} we show
there is a diffeomorphism $(0,+\infty)\times\mathcal{S}\to\mathrm{SL}_2(\C)/\mathrm{SU}(2)$
given by $(\alpha,[B]_{\mathbb{R}_{>0}})\mapsto[\![e^\alpha B+e^{-\alpha}(B^*)^{adj}]\!]$.
This allows us to see $\mathrm{SL}_2(\C)$ as a disjoint union
$\big(\{0\}\times\mathrm{SU}(2)\big)\cup\big((0,\infty)\times\mathcal{S}\big)$.
Our situation is thus dramatically simplified, since we can now focus on
$(\alpha,[B]_{\mathbb{R}_{>0}})$ instead of $\widetilde{\mathrm{VAL}}$.

We abbreviate $\{B\in\mathrm{Mat}_2(\C)\setminus\{0\}:\:\det(B)=0\}$ with $\{\det=0\}$.
The $\mathrm{SL}_2$ tropicalization is dominated via a surjection
\[\pi_{\mathbb{R}}:\Big(\{0\}\times\mathrm{SL}_2(\C)\Big)\cup
\Big((0,\infty)\times\{\det=0\}\Big)\to
\Big(\{0\}\times\mathrm{SU}(2)\Big)\cup
\Big((0,\infty)\times\mathcal{S}\Big),\]
that is defined as follows. Consider the polar decomposition of the special
linear group: $\mathrm{SL}_2(\C)\simeq\mathbb{H}^3\times\mathrm{SU}(2)$.
For any matrix $B\in\mathrm{SL}_2(\C)$ consider its polar decomposition $B=PU$
where $P\in\mathbb{H}^3$ and $U\in\mathrm{SU}(2)$. Set
\[\pi_\R(0,B)=(0,U)\quad\text{and}\quad
\pi_\R(\alpha,B)=(\alpha,[B]_{\mathbb{R}_{>0}})\quad\text{for }\alpha>0.\]
We can thus shift our focus to this new set
\[\Big(\{0\}\times\mathrm{SL}_2(\C)\Big)\cup
\Big((0,\infty)\times\{\det=0\}\Big)\subset\R\times\C^4.\]
This can be seen as the first instance of the new type of phase tropicalization of
$\mathrm{SL}_2(\K)$ for $\mathbb{K}$ being a complex-coefficient real-powered series.
We will henceforth call it \textit{valuative tropicalization}.

It was mentioned in the introduction, that $\mathrm{SL}_2$ tropicalizations of surfaces
are almost trivial to describe: they are complements to open balls $B_r(O)$ of radius
$r\geq 0$ centered in $O\in\mathbb{H}^3$. Thus, one is motivated to look at a
corresponding phase tropicalization $X\subset\mathrm{SL}_2(\C)$ fibered over
$\mathbb{H}^3\setminus B_r(O)$. Via the polar decomposition we can identify the
quotient $\mathrm{SL}_2(\C)/\mathrm{SU}(2)$ with $\mathbb{H}^3$. Indeed, if $B=PU$
is a polar decomposition then $P^2=BB^*$. Since $P\mapsto P^2$ is a diffeomorphism on
positive definite matrices, one can consider the map $\varkappa:\mathrm{SL}_2(\C)\to
\mathbb{H}^3,B\mapsto BB^*$ as a fibration that we call the \textit{hyperbolic amoeba map}.
We call the images of tropicalizations through this map \textit{hyperbolic tropicalizations}.

To better see the global picture of the phase tropicalization, we propose the
following change of perspective on the ambient space. So far, we were dealing with
the projection $\varkappa\colon\mathrm{SL}_2(\C)\to\mathbb{H}^3$ which arose from
taking the right quotient by the maximal compact subgroup $\mathrm{SU}(2)$.
If we consider the left quotient $\mathrm{SU}(2)\setminus\mathrm{SL}_2(\C)$
instead, we are led to defining the dual fibration $\varkappa^*
\colon\mathrm{SL}_2(\C)\to\mathbb{H}^3,BB\mapsto B^*B$. What one observes,
is that the distances from $O$ to $\varkappa(A)$ and to $\varkappa^*(A)$ coincide.
Thus all possible pairs $\widehat\varkappa(A)=(\varkappa(A),\varkappa^*(A))$
belong to a five-dimensional subspace $\mathcal{C}$ of
$\mathbb{H}^3\times\mathbb{H}^3$, which may be identified with a real cone over
$(\partial\mathbb{H}^3)^2=\mathbb{CP}^1\times\mathbb{CP}^1=Q_2(\C)$. It is natural
to fix the map $\widehat\varkappa\colon\mathrm{SL}_2(\C)\to\mathcal{C}$.
The fiber of $\widehat\varkappa$ over $(O,O)$, the tip of the cone, is $\mathrm{SU}(2)$,
and for all other points $(P_1,P_2)$ the fiber is a circle.
The images of our tropicalizations through $\widehat\varkappa$ are called
\textit{double-hyperbolic tropicalizations}. They dominate the hyperbolic tropicalizations.
We write $\mathrm{pr}_1:\mathrm{Cone}_\R(Q_2)\to\mathrm{SL}_2(\C)/\mathrm{SU}(2)$ for the
first projection, \ie $\varkappa=\mathrm{pr}_1\circ\widehat\varkappa$, and
$\pi_1:\mathrm{Cone}_\R(Q_2(\C))\to\mathrm{Cone}_\R(\C\Pj^1)$ induced by the
projection on the first factor of $\Q_2(\C)=\C\Pj^1\times\C\Pj^1$.
In \cref{fig:diagram}, we represent the different levels of tropicalizations and mappings.
Each level dominates the level below it.

\begin{figure}[ht]
\centering
\begin{tikzcd}
\parbox{3cm}{Phase\\ Space}&[-1.4cm]
    \R\times\C^4 &[-1.5cm]& \K^4\setminus\{0\} \ar[ll,"\Init_\nu"'] \\
\parbox{3cm}{Valuative\\ Tropicalization} &
    \big(\{0\}\times\mathrm{SL}_2(\C)\big)\cup
    \big((0,+\infty)\times\{\det=0\}\big)
    \ar[d,"\pi_\R"'] \arrow[draw=none]{u}[sloped,auto=false]{\subseteq} &&
        \mathrm{SL}_2(\K) \ar[ll] \ar[d,"\widetilde{\mathrm{VAL}}"]
        \arrow[draw=none]{u}[sloped,auto=false]{\subseteq} \\
\parbox{3cm}{$\mathrm{SL}_2$ \\ Tropicalization} &
    \big(\{0\}\times\mathrm{SU}(2)\big)\cup
    \big((0,+\infty)\times\mathcal{S}\big)
    \ar[rr,"\Xi"] \ar[d,"\pi_\C"'] &&
        \mathrm{SL}_2(\C) \ar[d,"\widehat{\varkappa}"] \\
\parbox{3cm}{Double-hyperbolic\\ Tropicalization} &
    \{*\}\cup\big((0,+\infty)\times Q_2(\C)\big) \ar[d,"\pi_1"'] \ar[rr,equal] &&
        \mathrm{Cone}_\R(Q_2(\C)) \ar[d,"\mathrm{pr}_1"] \\
\parbox{3cm}{Hyperbolic\\ Tropicalization} &
    \mathrm{Cone}_\R(\C\mathbb{P}^1) \arrow[draw=none]{r}[sloped,auto=false]{\equiv} &
        \mathbb{H}^3 \arrow[draw=none]{r}[sloped,auto=false]{\equiv} &
            \mathrm{SL}_2(\C)/\mathrm{SU}(2)
\end{tikzcd}
\caption{The different levels of tropicalization}
\label{fig:diagram}
\end{figure}
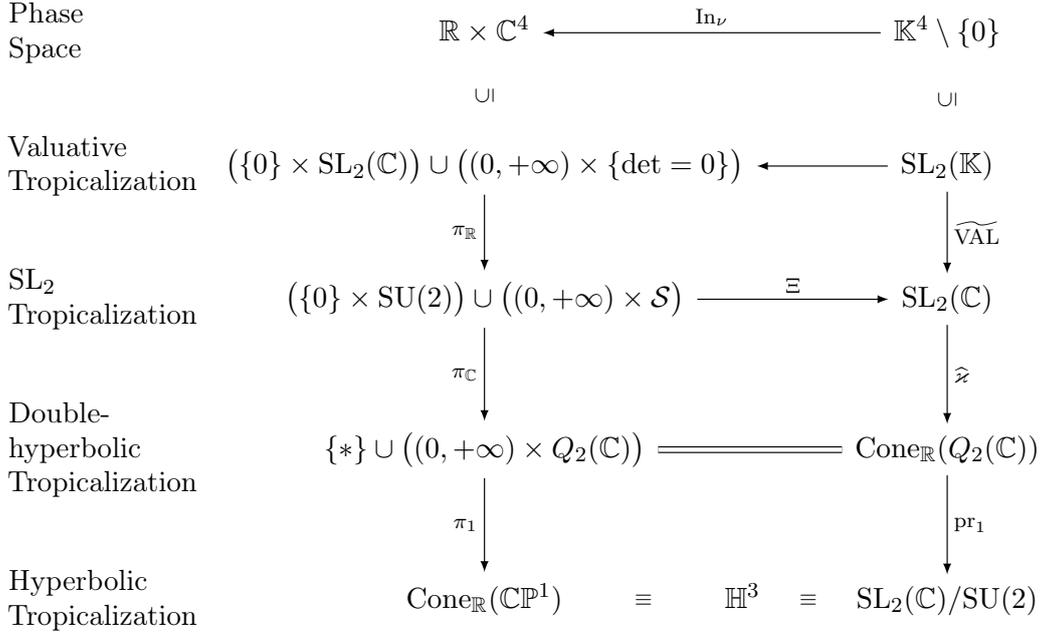

In order to prove \cref{thm:surfaces}, we describe the picture at the level of the valuative
tropicalization (second row in \cref{fig:diagram}). For any surface $X\subset\mathrm{SL}_2(\K)$
\[\Init_\nu(X) = \bigcup_{k=0}^r\{\beta_i\}\times\tilde S_i
\cup\bigcup_{k=0}^r(\beta_i,\beta_{i+1})\times \tilde C_i,\]
where $\tilde C_i$ is a homogeneous surface projecting to the above curve $C_i$ under
$\{\det=0\}\rightarrow Q_2$ and $\tilde S_i$ is an inhomogeneous surface in the total
space of the $\mathbb{C}^*$-bundle $\{\det=0\}$ over the quadric $Q_2$, which is a
particular case of \cref{thm:phase_trop} and \cref{prop:crit_values}. Even more concretely,
there is a sequence of homogeneous polynomials $f_0,f_1,\dots f_{d_{r+1}}$ of degrees
$0,1,\dots d_{r+1},$ such that $\tilde C_i$ is defined by $f_{d_{i+1}}=0$ and $S_i$
is given by $f_{d_{i-1}}+f_{d_{i-1}+1}+\cdots+f_{d_{i}}=0$. Moreover, every set of the above
form is realizable as a tropicalization of a surface given by $\sum t^{\gamma_i}f_i=0$,
such that the tropical polynomial $T\mapsto\max_i(\gamma_i+iT)$ has roots
$\{\beta_i:\:i=0,\ldots,r\}$. This is essentially the content of \cref{prop:crit_values} and
\cref{prop:principal}.

\begin{rke}\label{rke:future_facts}
We enumerate several facts about the fibers $X_P:=\varkappa^{-1}(P),\:P\in\mathbb{H}^3$ for
a general surface $X$.
\begin{enumerate}
\item First of all, the fibers $X_P$ are real semi-algebraic sets, thus we may speak about
the dimension of their irreducible components. One striking fact is that zero-dimensional
fibers never appear.
\item For a generic point $P$ the fiber $X_P$ is a generalized Hopf link, \ie a union of
pairwise linked circles in $\varkappa^{-1}(P)\cong\mathbb{S}^3$. More specifically, $X_P$
is a preimage of a finite set under a Hopf fibration defined by taking the quotient by
the stabilizer of $P$ under the action by $\mathrm{SU}(2)$, \ie
$\mathbb{S}^3=\mathrm{SU}(2)\to\mathrm{SU}(2)/\mathrm{Stab}(P)\cong\mathbb{S}^2$,
this finite subset does not locally depend on the level $\mathrm{dist}(O,P)$ while it is non-critical.
\item There is a finite number of {\it critical levels}, the first of which is $r>0$,
the radius of the open ball
$B_r(O)=\mathbb{H}^3\setminus\varkappa(\widetilde{\mathrm{VAL}}(X))$.
For each critical level $\beta>0$, a generic point $P\in\mathbb{H}^3$ at distance $\beta$
from $O$ has a two-dimensional fiber $X_P$. More precisely, $X_P\subset\mathbb{S}^3\subset\C^2$
may be seen as the radial projection of a complex algebraic curve in $\C^2$ with Newton
polygon having vertices $(n,0),(m,0),(0,n),(0,m)$, for some natural $n$ and $m$
depending only on the critical level.
\item For some phase tropicalizations $X$ there might be also three-dimensional fibers $X_P$.
This can happen, for instance, if the first critical level $r$ is $0$, \ie
the phase-forgetful tropicalization is the whole $\mathbb{H}^3$. Then $X_O$ is a spherical
coamoeba of some complex surface in $\mathrm{SL}_2(\C)$, which is often, but not always,
three-dimensional. Otherwise, the locus of $P$ with $X_P$ three-dimensional (and actually being the whole three-sphere) is a
finite union of segments extending to geodesics passing through $O$, which corresponds to the case when the above curve $C_i$ has an irreducible component of bi-degree $(0,1).$  
\end{enumerate}
\end{rke}

Similarly, tropicalization of curves in $\mathrm{SL}_2(\K)$ admit a floor diagram
decomposition, for which the floors are the critical levels. This is a phase extension of
the analogous fact about the geometric statement for tropical limits of hyperbolic
amoebas of families of curves established earlier in \cite{MS22} using very different
techniques.

To complete the picture, we clarify that if $\beta_0=0$, \ie the hyperbolic tropicalization
is the whole $\mathbb{H}^3$, the $\mathrm{SL}_2$ tropicalization acquires an extra
spherical coamoeba component $\{0\}\times\varkappa^\circ(S_0)$, for some surface $S_0$
in $\mathrm{SL}_2(\C)$, where $\varkappa^\circ:\mathrm{SL}_2(\C)\to\mathrm{SU}(2)$ is the
coamoeba map defined by taking the unitary part in the polar decomposition. At the
level of valuative tropicalization, this contribution simply becomes $\{0\}\times S_0$. 

At the more conceptual level, the case of $\mathrm{SL}_2$ may be seen as another building
block, besides the well-understood tori. The class of connected algebraic reductive groups
appears to be the suitable basis for performing tropicalization, since they possess a compact
real form, which would provide the phase for the generalized amoeba maps.

We finally formulate a couple of conjectural statements in line with this paper.
By Klein's theorem (that any hypersurface on a smooth quadric is a hypersurface section),
$X=\mathbb{V}(f,\det-1)$.
\begin{enumerate}
\item Assume the following.
\begin{enumerate}
    \item $\Init_\nu(X)$ is given by this additional equation $f$, \ie
    \[\Init_\nu(X)=\bigsqcup_{\alpha\in\R}\mathbb{V}(\IN_{\underline{\alpha}}(f),
    \IN_{\underline{\alpha}}(\det-1)).\]
    This situation is general in the coefficients of $f$.
    \item The polynomial $f$ has exactly $d$ tropical roots, where $d=\deg(f)$, they are
    positive and coincide with the critical levels of $X$.
    \item The curves from \cref{thm:surfaces} $C_1,\ldots,C_d$ are smooth and $C_i,\:C_{i+1}$
    intersect transversally. By Bertini's theorems, this is again a general situation.
\end{enumerate}
We can then conclude that the closure of $\mathrm{SL}_2(\mathbb{C})$ phase tropical diagram is a
topological $4$ manifold\footnote{Communicated to us by Ilia Zharkov.}. It is expected that it
is homeomorphic to the smooth complex surface given by a generic degree $d$ complex polynomial
and $\det-1$.
\item In the above assumptions, assume in addition that the $\mathrm{SL}_2$ phase tropical diagram is
invariant under conjugation of coordinates. We conjecture that its real part realizes an isotopy
type of a smooth real algebraic surface in $\mathrm{SL}_2(\mathbb{R})$ of degree $2d$. This justifies future
work towards an $\mathrm{SL}_2(\R)$ version of Viro's patchworking.
\end{enumerate}

Finally we wish to indicate the limit of our process, hinting at the difficulties towards
generalizing our work to higher dimensions or other reductive groups.

\begin{ex}\label{ex:counter-ex-dim3}
These results do not immediately generalize to $\mathrm{SL}_\ell$ for $\ell\geqslant3$.
Fix a value $\alpha\in(0,1)$ and consider, for instance, the diagonal matrix
\[A_t=\begin{pmatrix} t & & \\  & t^\alpha & \\ & & t^{-1-\alpha}\end{pmatrix}
=t\begin{pmatrix} 1 && \\ &0& \\ &&0\end{pmatrix}+o(t).\]
Clearly
\[\widetilde{\mathrm{VAL}}(A)=\begin{pmatrix} e && \\ & e^\alpha & \\ && e^{-1-\alpha}
\end{pmatrix}\]
depends on $\alpha$, however the first asymptotic of $A_t$ does not recover $\alpha$.
One needs to account for the lower order asymptotics.
\end{ex}

\section{Preliminaries}\label{sec:prelim}
In this section we present the basics of valuation theory necessary to define
a phase space and prove a generalized version of Kapranov's theorem.
We adopt the convention that the set $\N$ contains
$0$: $\N=\{0,1,2,3,\ldots\}$. We use the $(\max,+)$ convention for valuations,
\ie we adopt the following notations. Let $R$ be a commutative, unitary ring and
$\Gamma$ a totally ordered abelian group. We write $+$ for the addition law
of this group and we add an element not in $\Gamma$, that we denote $-\infty$.
We write $\Gamma_\infty=\Gamma\cup\{-\infty\}$ and extend the addition operation
and order relation so that $-\infty$ plays the role of the smallest element.

\begin{defn}
A \emph{valuation} $\nu$ on $R$ is a map $\nu:R\longrightarrow\Gamma_\infty$ satisfying
\begin{itemize}[noitemsep]
    \item[(V1)] $\forall a,b\in R,\:\nu(a\cdot b)=\nu(a)+\nu(b)$.
    \item[(V2)] $\forall a,b\in R,\:\nu(a+b)\leqslant\max\{\nu(a),\nu(b)\}$. This
    is called the ultrametric inequality.
    \item[(V3)] $\nu(1_R)=0$ and $\nu(0)=-\infty$.
    \item[(V4)] $\nu^{-1}(-\infty)=(0)$.
\end{itemize}
We often write $(R,\nu)$ for a \emph{valuative pair}, \ie a ring equipped with a valuation.
The mapping $\nu:R\setminus\{0\}\to\Gamma$ may not be surjective.
We write $\Gamma_\nu$ the abelian group generated by $\nu(R\setminus\{0\})$, called
the \textit{group of values}. The valuation $\nu$ is called \textit{trivial} if
$\Gamma_\nu=\{0\}$. We only consider nontrivial valuations in this paper.
\end{defn}

\begin{rke}\label{rke:basics}
\begin{enumerate}
\item When we talk of embedding a pair $(R,\nu)$ into $(S,\mu)$, we
mean we set an injective morphism $R\xhookrightarrow{\iota} S$, such
that $\mu\circ\iota=\nu$. If $R\subset S$ we can abbreviate by saying that
$\mu$ is an extension of $\nu$.
\item (V4) implies that $R$ is a domain. One could then extend $\nu$
to a valuation over $\text{Frac}(R)$, the quotient field of $R$, by setting
$\nu(a/b)=\nu(a)-\nu(b)$. It is easy to see it is well-defined.
\item For any integer $n\in\Z,\:\nu(n\cdot1_R)\leqslant0$. Indeed
\[\nu(n\cdot1_R)=\nu(\underbrace{1_R+\ldots+1_R}_{n\text{ times}})\leqslant\nu(1_R)=0.\]
Furthermore, we have $\nu(-1_R)=\nu(1_R)=0$ as $0=\nu(1_R)=\nu((-1_R)^2)=2\nu(-1_R)$.
\item By (V2) one can prove the following: for any $a,b\in R$, if $\nu(a)<\nu(b)$, then
$\nu(a+b)=\nu(a)$.
\end{enumerate}
\end{rke}

For the special case when $R$ is a field we define the following.

\begin{defn}
The \emph{valuation ring} of the valued field $(\K,\nu)$ is a local subring of $\K$,
written $\Or_\nu$ with maximal ideal $\m_\nu$ and residual field $\kappa_\nu$.
We define them as follows:
\begin{align*}
\Or_\nu &= \{a\in K\ |\ \nu(a)\leqslant0\}\\
\m_\nu &= \{a\in K\ |\ \nu(a)<0\}\\
\kappa_\nu &= \Or_\nu/\m_\nu.
\end{align*}
We will write $\bar{a}\in\kappa_\nu^\times$ for $\init_\nu(a)$, where $a\in\K$ is such that
$\nu(a)=0$ (\ie $a\in\Or_\nu^\times$).
\end{defn}

From a valuative pair $(R,\nu)$ we construct the \textit{graded algebra} of the pair.
For $\gamma\in\Gamma$ define the following groups

\begin{align*}
\mathcal{I}_\gamma=\mathcal{I}_\gamma(R,\nu) &= \{a\in R\ |\ \nu(a)\leqslant\gamma\}\\
\mathcal{I}^+_\gamma=\mathcal{I}^+_\gamma(R,\nu) &= \{a\in R\ |\ \nu(a)<\gamma\}.
\end{align*}

The graded ring $\gr_\nu(R)$ is
\[\gr_\nu(R)=\bigoplus_{\gamma\in\Gamma}\frac{\mathcal{I}_\gamma}{\mathcal{I}^+_\gamma}.\]

We abbreviate the $\gamma$ homogeneous factor of $\gr_\nu(R)$ by
$g_\gamma=\mathcal{I_\gamma}/\mathcal{I}^+_\gamma$.
The graded ring comes equipped with a map, called the \emph{initial form},
$\init_\nu\ :\ R\setminus\{0\}\to\text{gr}_\nu(R)$ assigning to $a\in R$
its class modulo $\mathcal{I}_\gamma^+(R,\nu)$ with $\gamma=\nu(a)$. We can additionally
extend the initial form map, by assigning $\init_\nu(0)=0$.

By definition, any homogeneous element \footnote{\ie any element belonging to one of the
direct factors $\mathcal{I}_\gamma/\mathcal{I}^+_\gamma$.}
is thus the initial form of some element in $R$ and any initial form
of any non-zero element is a non-zero element in the graded algebra.

\begin{rke}
\begin{enumerate}
\item IF $R=\K$ is a field, then $\kappa_\nu$ is simply $g_0$, the $0\in\Gamma_\nu$
component of $\gr_\nu(\K)$.
\item The valuation $\nu$ can be factored through the graded algebra in the following way.
We call the degree of a homogeneous element $\theta=\init_\nu(a)\in\gr_\nu(R)$ the value
$\gamma\in\Gamma$ such that $\theta\in\mathcal{I}_\gamma/\mathcal{I}^+_\gamma$.
Clearly $\gamma=\nu(a)$. For any element $\theta=\sum_i\theta_i\in\gr_\nu(R)$ with
$\theta_i$ homogeneous, we call $\deg(\theta)$, the smallest of $\deg(\theta_i)$.
We thus have a factorization
\[\begin{tikzcd}
& \gr_\nu(R) \ar[rd,"\deg"] & \\
R\setminus\{0\} \ar[ur,"\init_\nu"] \ar[rr,"\nu"] & & \Gamma
\end{tikzcd}\]
\item If $R=\K$ is a field, then each $g_\gamma$ is a natural $\kappa_\nu$ vector
space: for any $a\in\mathcal{O}_\nu$ and $r\in\K$ we have
$\overline{a}\cdot\init_\nu(r)=\init_\nu(ar)$.
\end{enumerate}
\end{rke}

For general filtered modules, rings, algebras etc. there is also a
notion of initial form which fails to be a morphism in general. It may not
even be multiplicative, however when considering the graded algebra
associated to a valuation, we can state some simple properties.

\begin{prop}\label{prop:in_compute}
Let $a,b\in R$. We have the following
\begin{enumerate}[noitemsep]
\item $\init_\nu(a\cdot b)=\init_\nu(a)\cdot\init_\nu(b)$.
Thus the graded ring is an integral domain.
\item if $\nu(a)>\nu(b)$ then $\init_\nu(a+b)=\init_\nu(a)$.
\item if $\nu(a)=\nu(b)>\nu(a+b)$ then $\init_\nu(a+b)\neq\init_\nu(a)+\init_\nu(b)=0$.
\item if $\nu(a)=\nu(b)=\nu(a+b)$ then $\init_\nu(a+b)=\init_\nu(a)+\init_\nu(b)$.
\item we can generalize the previous point to any finite number of terms: for all
$x_1,\ldots,x_n\in R$ with $\nu(x_1)=\ldots=\nu(x_n)=\nu(x_1+\cdots+x_n)$ we have
\[\init_\nu(x_1+\cdots+x_n)=\init_\nu(x_1)+\cdots+\init_\nu(x_n).\]
\end{enumerate}
\end{prop}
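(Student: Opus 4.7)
The plan is to read off each statement directly from the definitions of $\mathcal{I}_\gamma$, $\mathcal{I}^+_\gamma$ and the initial form map, invoking only the valuation axioms (V1)--(V4) and the ultrametric inequality. The whole proof is essentially definitional and no single step is hard; the mildest subtlety lies in proving integrality of $\gr_\nu(R)$ for inhomogeneous elements in (1), where one must use the total ordering of $\Gamma$ to extract a top-degree summand, but this is the standard argument in any $\Gamma$-graded setting.

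For (1), set $\gamma=\nu(a)$ and $\delta=\nu(b)$. By (V1), $\nu(ab)=\gamma+\delta$, so $ab$ represents $\init_\nu(ab)$ in $g_{\gamma+\delta}$. By the very definition of multiplication on $\gr_\nu(R)$, the product $\init_\nu(a)\cdot\init_\nu(b)$ is also the class of $ab$ in $g_{\gamma+\delta}$, so they coincide. For integrality, first handle the homogeneous case: if $\alpha=\init_\nu(a)$ and $\beta=\init_\nu(b)$ are both nonzero then $a,b\neq 0$, hence $ab\neq 0$ (since $R$ is a domain by \cref{rke:basics}), and because $\init_\nu$ sends nonzero elements to nonzero classes we get $\alpha\beta=\init_\nu(ab)\neq 0$. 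For arbitrary nonzero $x=\sum x_\gamma$ and $y=\sum y_\delta$ in $\gr_\nu(R)$, pick the largest indices $\gamma_0,\delta_0$ with $x_{\gamma_0},y_{\delta_0}\neq 0$; the $(\gamma_0+\delta_0)$-component of $xy$ equals $x_{\gamma_0}y_{\delta_0}$, which is nonzero by the homogeneous case, so $xy\neq 0$.

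For (2), I would apply the ultrametric inequality twice. Directly, $\nu(a+b)\leqslant\max(\nu(a),\nu(b))=\nu(a)$. Applied to the identity $a=(a+b)+(-b)$, and using $\nu(-b)=\nu(b)$ (a consequence of (V1) and $\nu(-1_R)=0$), it gives $\nu(a)\leqslant\max(\nu(a+b),\nu(b))$; since $\nu(b)<\nu(a)$, this forces $\nu(a+b)\geqslant\nu(a)$, hence equality. The difference $(a+b)-a=b$ lies in $\mathcal{I}^+_{\nu(a)}$, so $a$ and $a+b$ define the same class in $g_{\nu(a)}$, yielding $\init_\nu(a+b)=\init_\nu(a)$.

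For (3)--(5), set $\gamma=\nu(x_1)=\cdots=\nu(x_n)$ (taking $n=2$ with $x_1=a,x_2=b$ for (3) and (4)). All summands lie in $\mathcal{I}_\gamma$, hence so does their sum, and the quotient map $\mathcal{I}_\gamma\twoheadrightarrow g_\gamma$ is an additive group homomorphism, so $\init_\nu(x_1)+\cdots+\init_\nu(x_n)$ is literally the class of $x_1+\cdots+x_n$ in $g_\gamma$. If $\nu(x_1+\cdots+x_n)=\gamma$, this class is nonzero and coincides by definition with $\init_\nu(x_1+\cdots+x_n)$, proving (4) and (5). If instead $\nu(a+b)<\gamma$ as in (3), then $a+b\in\mathcal{I}^+_\gamma$ so its class in $g_\gamma$ vanishes, whereas $\init_\nu(a+b)$ is a nonzero element of the strictly different graded piece $g_{\nu(a+b)}$, giving (3).
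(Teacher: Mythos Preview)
Your proof is correct and follows essentially the same approach as the paper: the paper's own proof is a two-sentence sketch (``Statement (1) is simply a consequence of (V1). The rest amount to using (V2) and the fact that $\mathcal{I}_\gamma\to\mathcal{I}_\gamma/\mathcal{I}^+_\gamma$ is a group morphism''), and you have simply unpacked these same ideas with full detail. Your treatment of integrality for inhomogeneous elements via the top-degree summand is a standard argument that the paper leaves entirely implicit, so if anything you are more complete.
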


\begin{proof}
Statement (1) is simply a consequence of (V1). The rest amount to using (V2) and the fact
that $\mathcal{I}_\gamma\to\mathcal{I}_\gamma/\mathcal{I}^+_\gamma$ is a group morphism.
\end{proof}

\begin{ex}\label{ex:Hahn}
Fix any field $F$ and an abelian ordered group $\Gamma$. We define the valued field
of Hahn series $\K=F[[t^{\Gamma}]]$ as follows. A Hahn series is a formal expression of
the form $x=\sum_{\gamma\in\Gamma}x_\gamma t^\gamma,\:x_\gamma\in F$, where the set
$-\mathrm{supp}(x):=\{-\gamma:\:x_\gamma\neq0\}$ is well-ordered: any subset of
$\mathrm{supp}(x)$ has a largest element. One can define the term-wise addition and the
convolution product, thus giving $F[[t^{\Gamma}]]$ a field structure (\cf \cite{Poo} for
details). Finally we equip this field with a canonical valuation $\nu=\ord_t$, defined as
\[\ord_t(x):=\max(\mathrm{supp}(x)).\]
Let $\Gamma_{\leqslant0}$ be the set of non-negative elements of $\Gamma$ and $\Gamma_{<0}$
the set of its negative elements. We have
\[\Or_{\nu}=F[[t^{\Gamma_{\leqslant0}}]],\:\mathfrak{m}_\nu=F[[t^{\Gamma_{<0}}]],
\:\kappa_\nu=F,\:\text{ and }\:\gr_\nu(\K)=F[\Gamma],\]
where $F[\Gamma]$ is the group algebra of $\Gamma$ over $F$, \ie it is the set of formal
expressions $\sum_{\gamma\in\Gamma}x_\gamma[\gamma]$ where all but finitely many
$x_\gamma\in F$ are zero. To focus on the dominating term of
$x=\sum_{\gamma\in\Gamma}x_\gamma t^\gamma$, \ie the one of highest value, we often write
$x=x_\alpha t^\alpha+o(t^\alpha)$ where $\alpha=\ord_t(\alpha)$.

In fact for any ring $R$ such that $F[t^\Gamma]\subset R\subset F[[t^{\Gamma}]]$, equipped with
the restriction of $\ord_t$ as its valuation $\nu$, we have $\gr_\nu(R)=F[t^\Gamma]$.
\end{ex}

\section{Composing initial forms}\label{sec:composing_init}

A key ingredient in proving our Kapranov theorem is functoriality. One could see this
result as a means to substantiate a change of variables on the level of the graded
algebra, mirroring a change of variables on the level of the polynomial algebra.

\subsection{Functoriality}
We show how the construction of the graded algebra is functorial.
We extend \cite[Thm. 1.8, p. 9]{BL}.

\begin{thm}\label{thm:funct}
Consider two valuative pairs $(R_1,\nu_1),\:(R_2,\nu_2)$. We assume $\nu_1,\nu_2$
take values inside a common ordered subgroup $\Gamma$. Let $\Phi:R_1\to R_2$
be a ring morphism such that $\nu_1\geqslant\nu_2\circ\Phi$,
\ie $\forall a\in R_1,\:\nu_1(a)\geqslant\nu_2(\Phi(a))$.
\begin{enumerate}
\item There is a canonical homogeneous map  of graded rings
(\ie sending homogeneous elements to either $0$ or homogeneous of same degree)
\[\gr(\Phi):\gr_{\nu_1}(R_1)\longrightarrow\gr_{\nu_2}(R_2),\]
sending $\init_\nu(a),\:a\in R_1$ to $\Phi(a)\mod
\mathcal{I}^+_{\gamma}(R_2,\nu_2)$, where $\gamma=\nu_1(a)$.
\item The kernel of $\gr(\Phi)$ is the homogeneous ideal
\[\mathrm{Ker}(\gr(\Phi))=\Big\langle\init_{\nu_1}(I)\Big\rangle,
\text{ where }I=\{a\in R_1\,:\:\nu_1(a)>\nu_2(\Phi(a))\}.\]
\item The above construction $\gr(-)$ is functorial in the following sense.
Let $(R_3,\nu_3)$ be another valued pair and let $\Psi:R_2\to R_3$ be another
ring morphism such that $\nu_2\geqslant\nu_3\circ\Psi$. Then one has
\[\gr(\Psi\circ\Phi)=\gr(\Psi)\circ\gr(\Phi).\]
\end{enumerate}
\end{thm}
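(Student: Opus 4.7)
The plan is to verify the three claims successively, working one graded piece at a time and leveraging \cref{prop:in_compute} for the ring structure.

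\textbf{Part (1).} The inequality $\nu_1 \geq \nu_2 \circ \Phi$ immediately gives $\Phi(\mathcal{I}_\gamma(R_1,\nu_1)) \subset \mathcal{I}_\gamma(R_2,\nu_2)$ and $\Phi(\mathcal{I}^+_\gamma(R_1,\nu_1)) \subset \mathcal{I}^+_\gamma(R_2,\nu_2)$ for every $\gamma \in \Gamma$. Passing to quotients yields a well-defined group morphism on each graded piece, and the direct sum over $\gamma$ assembles into a homogeneous additive map $\gr(\Phi)$ satisfying the stated formula. For multiplicativity, it suffices by bilinearity to check on products of initial forms: by \cref{prop:in_compute}, $\init_{\nu_1}(a)\init_{\nu_1}(b) = \init_{\nu_1}(ab)$, and both sides of the target identity collapse to $\Phi(a)\Phi(b) \mod \mathcal{I}^+_{\nu_1(a)+\nu_1(b)}(R_2,\nu_2)$ since $\Phi$ is a ring morphism.

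\textbf{Part (2).} One inclusion is immediate: for $a \in I$ we have $\nu_2(\Phi(a)) < \nu_1(a)$, so $\Phi(a) \in \mathcal{I}^+_{\nu_1(a)}(R_2,\nu_2)$ and hence $\gr(\Phi)(\init_{\nu_1}(a)) = 0$; since $\ker\gr(\Phi)$ is an ideal, it contains $\langle\init_{\nu_1}(I)\rangle$. For the converse, I use that $\ker\gr(\Phi)$ is a homogeneous ideal (being the kernel of a graded morphism), so it suffices to test on a homogeneous $\xi \in \ker\gr(\Phi)$. Writing $\xi = \init_{\nu_1}(a)$ with $\nu_1(a) = \deg \xi$, the condition $\gr(\Phi)(\xi) = 0$ reads $\nu_2(\Phi(a)) < \nu_1(a)$, i.e.\ $a \in I$, giving $\xi \in \init_{\nu_1}(I) \subset \langle\init_{\nu_1}(I)\rangle$.

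\textbf{Part (3).} Chaining the hypotheses gives $\nu_1 \geq \nu_3 \circ (\Psi \circ \Phi)$, so $\gr(\Psi \circ \Phi)$ is defined by Part (1). On a homogeneous generator $\init_{\nu_1}(a)$, both $\gr(\Psi \circ \Phi)$ and $\gr(\Psi) \circ \gr(\Phi)$ produce $\Psi(\Phi(a)) \mod \mathcal{I}^+_{\nu_1(a)}(R_3,\nu_3)$; since both sides are additive and the homogeneous elements generate $\gr_{\nu_1}(R_1)$ as an abelian group, agreement on generators extends to agreement on the whole ring.

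The only real subtlety is an asymmetry of degrees: $\init_{\nu_1}(a)$ has source-degree $\nu_1(a)$, yet $\Phi(a)$ may have strictly smaller valuation in $(R_2,\nu_2)$. The cleanest way to handle this uniformly is to always reduce $\Phi(a)$ modulo $\mathcal{I}^+_{\nu_1(a)}$ of the target, regardless of where $\Phi(a)$ actually lives. This treats the degree-preserving and degree-dropping cases in one stroke, the latter producing zero and corresponding precisely to the elements of $I$ that drive Part (2). Once the convention is fixed, everything else is formal.
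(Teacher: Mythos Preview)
Your proof is correct and follows essentially the same approach as the paper: both derive the map on each graded piece from the filtration inclusions, check multiplicativity on products of initial forms, identify the kernel via homogeneous elements, and obtain functoriality by comparing the induced maps piece by piece. Your closing remark on the degree asymmetry is a helpful clarification but does not constitute a different argument.
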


\begin{proof}
By hypothesis we have for every $\gamma\in\Gamma$
\begin{align*}
\Phi(\mathcal{I}_\gamma(R_1,\nu_1)) & \subseteq\mathcal{I}_\gamma(R_2,\nu_2)\\
\Phi(\mathcal{I}^+_\gamma(R_1,\nu_1)) & \subseteq\mathcal{I}^+_\gamma(R_2,\nu_2).
\end{align*}
This induces a mapping of abelian groups
\[\frac{\mathcal{I}_\gamma(R_1,\nu_1)}
{\mathcal{I}^+_\gamma(R_1,\nu_1)}\longrightarrow
\frac{\mathcal{I}_\gamma(R_2,\nu_2)}{\mathcal{I}^+_\gamma(R_2,\nu_2)},\quad
\init_\nu(a)\longmapsto\Phi(a)\mod\mathcal{I}^+_{\nu_1(a)}(R_2,\nu_2).\]
These maps put together generate a global mapping of abelian groups
$\gr(\Phi):\gr_{\nu_1}(R_1)\to\gr_{\nu_2}(R_2)$.

Since $\Phi$ is a ring morphism, it sends units to units, so $\gr(\Phi)$ clearly sends the
unit of $\gr_{\nu_1}(R_1)$ to the unit of $\gr_{\nu_2}(R_2)$. By (V1) and the fact that
$\Phi$ is multiplicative, $\gr(\Phi)$ is also multiplicative. It is enough to show it for two
homogeneous elements $\init_{\nu_1}(a),\init_{\nu_1}(b),\,a,b\in R_1$
\begin{align*}
\gr(\Phi)(\init_{\nu_1}(a)\cdot\init_{\nu_1}(b))
&=\gr(\Phi)(\init_{\nu_1}(ab))\\
&=\Phi(ab)\mod\mathcal{I}^+_{\nu_1(ab)}(R_2,\nu_2)\\
&=\Phi(a)\Phi(b)\mod\mathcal{I}^+_{\nu_1(a)+\nu_1(b)}(R_2,\nu_2)\\
&=\Big(\Phi(a)\mod\mathcal{I}^+_{\nu_1(a)}(R_2,\nu_2)\Big)\cdot
    \Big(\Phi(b)\mod\mathcal{I}^+_{\nu_1(b)}(R_2,\nu_2)\Big)\\
&=\gr(\Phi)(\init_{\nu_1}(a))\cdot\gr(\Phi)(\init_{\nu_1}(b)).
\end{align*}

The kernel of $\gr(\Phi)$ is the abelian subgroup of $\gr_{\nu_1}(R_1)$
generated by the homogeneous elements $\init_{\nu_1}(a)$ that are sent to $0$
in $\gr_{\nu_2}R_2$. In other words, these homogeneous elements are the
$\init_{\nu_1}(a)$ such that $\Phi(a)\in\mathcal{I}^+_{\nu_1(a)}(R_2,\nu_2)$.
This last condition is equivalent to $\nu_1(a)>\nu_2(\Phi(a))$.

Lastly, functoriality is a consequence of the fact that the morphism
$\frac{\mathcal{I}_\gamma(R_1,\nu_1)}{\mathcal{I}^+_\gamma(R_1,\nu_1)}\to
\frac{\mathcal{I}_\gamma(R_3,\nu_3)}{\mathcal{I}^+_\gamma(R_3,\nu_3)}$, induced by
$\Psi\circ\Phi$ is the composition of the morphisms
$\frac{\mathcal{I}_\gamma(R_2,\nu_2)}{\mathcal{I}^+_\gamma(R_2,\nu_2)}\to
\frac{\mathcal{I}_\gamma(R_3,\nu_3)}{\mathcal{I}^+_\gamma(R_3,\nu_3)}$ induced by $\Psi$
and
$\frac{\mathcal{I}_\gamma(R_1,\nu_1)}{\mathcal{I}^+_\gamma(R_1,\nu_1)}\to
\frac{\mathcal{I}_\gamma(R_2,\nu_2)}{\mathcal{I}^+_\gamma(R_2,\nu_2)}$ induced by $\Phi$.
\end{proof}

\subsection{Monomial valuations}
In order to give a better picture of phase tropicalizations, we will need to define a set
of valuations on $\K[x_1,\ldots,x_n]$ extending $\nu$. We frequently use multi-index notation:
if $u\in\N^n$, then $A^u$ is assumed to mean that the tuple $u=(u_1,\ldots,u_n)$ and the elements
$A_1,\ldots,A_n$ are well-defined, and $A^u=A_1^{u_1}\cdots A_n^{u_n}$.

These valuations on $\K[x_1,\ldots,x_n]$ can have an excessively complicated structure or
definition. We focus on a specific class of valuations that is easy to classify. Namely
we study the valuations which are perfectly determined by their values on monomials
(or equivalently, the values of the coordinates $x_i$).

We introduce some additional notations. Write $f=\sum_{u\in U}c_ux^u$. We define
\begin{align*}
\IN_{\underline{\gamma}}(f)
&=\sum_{u\in U_0}\init_\nu(c_u)X^u\in\gr_\nu(\K)[X_1,\ldots,X_n]\quad\text{and}\\
[f]_{\underline{\gamma}}&=\sum_{u\in U_0}c_ux^u\in\K[x_1,\ldots,x_n]
\end{align*}
where
\[U_0=\{u\in U:\:\nu_{\underline{\gamma}}(c_ux^u)=\nu_{\underline{\gamma}}(f)\}.\]

\begin{prop}\label{prop:mono_is_valuation}
Let $\nu:\K\to\Gamma_\infty$ be a valuation (with the mapping $\nu$ not necessarily
surjective). Let $\underline{\gamma}=(\gamma_1,\ldots,\gamma_n)\in\Gamma^n$ and for
$f=\sum_{u\in U}c_ux^u\in\K[x_1,\ldots,x_n]\setminus\{0\}$ define
\[\nu_{\underline{\gamma}}(f)=\max_{u\in U,\,c_u\neq0}\nu_{\underline{\gamma}}(c_ux^u)=
\max_{u\in U,\,c_u\neq0}\left(\nu(c_u)+\sum_{i=1}^nu_i\gamma_i\right).\]
The map $\nu_{\underline{\gamma}}$ is a valuation. We clearly have
$\nu_{\underline{\gamma}}(x^u)=u\cdot\underline{\gamma}:=\sum_{i=1}^nu_i\gamma_i$.
\end{prop}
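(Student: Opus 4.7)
The plan is to verify the four valuation axioms (V1)--(V4) from the definition. Axioms (V3) and (V4) are immediate: a nonzero polynomial has at least one nonzero coefficient $c_u$, for which $\nu(c_u) \in \Gamma$, so the max is in $\Gamma$ and not $-\infty$; and $\nu_{\underline\gamma}(1) = \nu(1) = 0$. For the ultrametric inequality (V2), if $f = \sum c_u x^u$ and $g = \sum d_u x^u$, the coefficients of $f+g$ are $c_u + d_u$, and applying (V2) for $\nu$ term-by-term gives $\nu_{\underline\gamma}(f+g) \leqslant \max\{\nu_{\underline\gamma}(f), \nu_{\underline\gamma}(g)\}$.

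The substantive point is multiplicativity (V1). The inequality $\nu_{\underline\gamma}(fg) \leqslant \nu_{\underline\gamma}(f) + \nu_{\underline\gamma}(g)$ is routine: the coefficient of $x^w$ in $fg$ is $\sum_{u+v=w} c_u d_v$, to which (V1) and (V2) for $\nu$ apply, and the $\underline\gamma$-weighting distributes over $w = u+v$. The reverse inequality is where the work lies, and I would extract it from the integral-domain structure of $\gr_\nu(\K)$ established in \cref{prop:in_compute}(1).

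Concretely, let $U_0, V_0$ be the subsets of exponents achieving the maxima for $f$ and $g$ respectively. Consider the formal polynomial
\[P := \Bigl(\sum_{u \in U_0} \init_\nu(c_u) X^u\Bigr)\cdot\Bigl(\sum_{v \in V_0} \init_\nu(d_v) X^v\Bigr) \in \gr_\nu(\K)[X_1,\ldots,X_n].\]
Since $\gr_\nu(\K)$ is an integral domain, so is the polynomial ring over it, and $P \neq 0$. Hence there exists $w$ such that the coefficient $\sum_{u+v=w,\,u\in U_0,\,v\in V_0} \init_\nu(c_u)\init_\nu(d_v)$ is nonzero in $\gr_\nu(\K)$. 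All such terms have the common $\nu$-value $\nu_{\underline\gamma}(f) + \nu_{\underline\gamma}(g) - w\cdot\underline\gamma$, while the remaining pairs $(u,v)$ with $u+v=w$ contribute strictly smaller value. By the additivity property of $\init_\nu$ on equal-valuation sums (\cref{prop:in_compute}(2),(5)), the initial form of $\sum_{u+v=w} c_u d_v$ equals the nonzero sum above. This forces $\nu\bigl(\sum_{u+v=w} c_u d_v\bigr) + w\cdot\underline\gamma = \nu_{\underline\gamma}(f) + \nu_{\underline\gamma}(g)$, giving the missing lower bound.

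The main obstacle is that, a priori, cancellations among different contributions to the coefficient of some $x^w$ in $fg$ could strictly drop its valuation; the argument above rules this out precisely at one well-chosen $w$, and does so by reducing to the fact that a product of two nonzero polynomials over a domain is nonzero. This is the same nondegeneracy phenomenon that makes Kapranov-style results work, and it will be reused in later sections.
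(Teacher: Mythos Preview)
Your argument is correct, but it takes a different route from the paper for the key inequality $\nu_{\underline\gamma}(fg)\geqslant\nu_{\underline\gamma}(f)+\nu_{\underline\gamma}(g)$. The paper argues directly at the level of $\K[x_1,\dots,x_n]$: it fixes a monomial order, picks the leading monomials $c_{u_0}x^{u_0}$ of $[f]_{\underline\gamma}$ and $d_{v_0}x^{v_0}$ of $[g]_{\underline\gamma}$, and observes that $c_{u_0}d_{v_0}x^{u_0+v_0}$ is the unique leading term of $[f]_{\underline\gamma}[g]_{\underline\gamma}$, hence uncancelled; a separate estimate then shows $\nu_{\underline\gamma}(fg)=\nu_{\underline\gamma}([f]_{\underline\gamma}[g]_{\underline\gamma})$. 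You instead pass to $\gr_\nu(\K)[X_1,\dots,X_n]$ and invoke that it is a domain (via \cref{prop:in_compute}(1)) to locate \emph{some} exponent $w$ with nonvanishing coefficient in the product $P$, then pull back to $\K$. Your approach is more conceptual and foreshadows \cref{prop:graded_monomial}; the paper's is more elementary and self-contained, not relying on the graded ring at all. Both hinge on the same underlying fact that polynomial rings over domains are domains, but package it differently.

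One small cleanup: when you write ``By the additivity property of $\init_\nu$ on equal-valuation sums (\cref{prop:in_compute}(2),(5)), the initial form of $\sum_{u+v=w} c_u d_v$ equals the nonzero sum above'', the logical flow actually needs the $n$-term analogue of \cref{prop:in_compute}(3)/(4) first (nonvanishing of $\sum\init_\nu(c_ud_v)$ forces $\nu\bigl(\sum c_ud_v\bigr)$ to equal the common value), and only then does (5) identify the initial form. This is immediate from the definition of $g_\gamma=\mathcal{I}_\gamma/\mathcal{I}_\gamma^+$, but the citation as written is slightly off.
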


\begin{proof}
Properties (V3) and (V4) are clear. Let us show (V2). Consider $f=\sum_{u\in U}c_ux^u$
and $g=\sum_{u\in U}d_ux^u$. Take $u^0\in U$ such that $\nu_{\underline{\gamma}}(f+g)=
\nu(c_{u^0}+d_{u^0})+u^0\cdot\underline{\gamma}$. We clearly have
\begin{align*}
\nu_{\underline{\gamma}}(f+g) &= \nu(c_{u^0}+d_{u^0})+u^0\cdot\underline{\gamma} \\
&\leqslant\max\{\nu(c_{u^0}),\nu(d_{u^0})\}+u^0\cdot\underline{\gamma} \\
&=\max\left\{\nu(c_{u^0})+u^0\cdot\underline{\gamma},\:
    \nu(d_{u^0})+u^0\cdot\underline{\gamma}\right\} \\
&\leqslant\max\{\nu_{\underline{\gamma}}(f),\:\nu_{\underline{\gamma}}(g)\}.
\end{align*}
It remains to show (V1) which is the most difficult property to prove.
Fix $f=\sum_{u\in U}c_ux^u$ and $g=\sum_{v\in V}d_vx^v$. We proceed in several steps,
by showing a double inequality between $\nu_{\underline{\gamma}}(fg)$ and
$\nu_{\underline{\gamma}}(f)+\nu_{\underline{\gamma}}(g)$. Note that since we have (V2)
we can use one of its consequences, namely \cref{rke:basics} (4).

\textit{First step:} we show one inequality. For arbitrary $f,g$ write
\[fg=\sum_{w\in W}e_wx^w,\quad\mathrm{where}\quad
e_w=\sum_{\substack{u\in U,\:v\in V \\u+v=w}}c_ud_v.\]
Then
\begin{align*}
\nu_{\underline{\gamma}}(fg) &
    \leqslant\max_{w\in W}\max_{\substack{u\in U,\:v\in V \\u+v=w}}
    \nu_{\underline{\gamma}}(c_ux^u)+\nu_{\underline{\gamma}}(d_vx^v) \\
&=\max_{u\in U,v\in V}\nu_{\underline{\gamma}}(c_ux^u)+\nu_{\underline{\gamma}}(d_vx^v) \\
&=\max_{u\in U}\nu_{\underline{\gamma}}(c_ux^u)+\max_{v\in V}\nu_{\underline{\gamma}}(d_vx^v) \\
&=\nu_{\underline{\gamma}}(f)+\nu_{\underline{\gamma}}(g).
\end{align*}

\textit{Second step:}
Consider a monomial ordering on $\K[x_1,\ldots,x_n]$ (\eg the lexicographic ordering).
Consider $u_0\in U_0$ and $v_0\in V_0$ such that $c_{u_0}x^{u_0}$ and $d_{v_0}x^{v_0}$ are
the highest terms in $[f]_{\underline\gamma}$ and $[g]_{\underline\gamma}$ respectively,
and set $w_0=u_0+v_0$. Then $e_{w_0}x^{w_0}$ is the highest term in
$[f]_{\underline\gamma}[g]_{\underline\gamma}$ and it can be decomposed
in $e_{w_0}x^{w_0}=c_{u_0}x^{u_0}d_{v_0}x^{v_0}$. We thus have
\begin{align*}
\nu_{\underline{\gamma}}([f]_{\underline\gamma})+
\nu_{\underline{\gamma}}([g]_{\underline\gamma})&\geqslant
\nu_{\underline{\gamma}}([f]_{\underline\gamma}[g]_{\underline\gamma})\geqslant
\nu_{\underline{\gamma}}(e_{w_0}x^{w_0})
=\nu_{\underline{\gamma}}(c_{u_0}x^{u_0}d_{v_0}x^{v_0}) \\
&=\nu_{\underline{\gamma}}(c_{u_0}x^{u_0})+\nu_{\underline{\gamma}}(d_{v_0}x^{v_0})
=\nu_{\underline{\gamma}}([f]_{\underline\gamma})+
\nu_{\underline{\gamma}}([g]_{\underline\gamma}).
\end{align*}
In conclusion, the above inequalities are in fact equalities, thus
\[\nu_{\underline{\gamma}}([f]_{\underline\gamma}[g]_{\underline\gamma})=
\nu_{\underline{\gamma}}([f]_{\underline\gamma})+
\nu_{\underline{\gamma}}([g]_{\underline\gamma}).\]

\textit{Third step:} we show that $\nu_{\underline{\gamma}}(fg)=
\nu_{\underline{\gamma}}([f]_{\underline\gamma}[g]_{\underline\gamma})$.
Observe that
\[\nu_{\underline{\gamma}}(f-[f]_{\underline\gamma})<
\nu_{\underline{\gamma}}(f)=\nu_{\underline{\gamma}}([f]_{\underline\gamma})\quad\text{and}\quad
\nu_{\underline{\gamma}}(g-[g]_{\underline\gamma})<
\nu_{\underline{\gamma}}(g)=\nu_{\underline{\gamma}}([g]_{\underline\gamma}).\]
By the first step we have
\begin{align*}
\nu_{\underline{\gamma}}(fg-[f]_{\underline\gamma}[g]_{\underline\gamma}) &= 
\nu_{\underline{\gamma}}(f(g-[g]_{\underline\gamma})+
[g]_{\underline\gamma}(f-[f]_{\underline\gamma})) \\
&\leqslant\max\left\{\nu_{\underline{\gamma}}(f(g-[g]_{\underline\gamma})),
\nu_{\underline{\gamma}}([g]_{\underline\gamma}(f-[f]_{\underline\gamma}))\right\} \\
&\leqslant\max\left\{\nu_{\underline{\gamma}}(f)+
\nu_{\underline{\gamma}}(g-[g]_{\underline\gamma}),
\nu_{\underline{\gamma}}([g]_{\underline\gamma})+
\nu_{\underline{\gamma}}(f-[f]_{\underline\gamma})\right\} \\
&<\nu_{\underline{\gamma}}([f]_{\underline\gamma})+
\nu_{\underline{\gamma}}([g]_{\underline\gamma}) \\
&=\nu_{\underline{\gamma}}([f]_{\underline\gamma}[g]_{\underline\gamma}).
\end{align*}

By using, in succession, steps three, two and one, we obtain
\[\nu_{\underline{\gamma}}(fg)
=\nu_{\underline{\gamma}}([f]_{\underline\gamma}[g]_{\underline\gamma})
=\nu_{\underline{\gamma}}([f]_{\underline\gamma})+
\nu_{\underline{\gamma}}([g]_{\underline\gamma})
=\nu_{\underline{\gamma}}(f)+\nu_{\underline{\gamma}}(g),\]
which concludes our proof.
\end{proof}

We grade the $\gr_\nu(\K)[X_1,\ldots,X_n]$ with respect to $\nu_{\underline{\gamma}}$:
any element of $\gr_\nu(\K)[X_1,\ldots,X_n]$ can be uniquely decomposed as sum of terms
$\theta X^u,\:u\in\N^n$ with $\theta\in\gr_\nu(\K)$ homogeneous. We associate to it the degree
$\deg(\theta)+u\cdot\underline{\gamma}$. It should come as no surprise that
$\deg(\init_\nu(c)X^u)=\nu_{\underline{\gamma}}(cx^u)$.

Observe finally that $\gr_{\nu_{\underline{\gamma}}}(\K[x_1,\ldots,x_n])$ is a natural
$\gr_\nu(\K)$-algebra. Indeed, by functoriality \cref{thm:funct}, there is a clear
injective ring map $\gr_\nu(\K)\to\gr_{\nu_{\underline{\gamma}}}(\K[x_1,\ldots,x_n])$.
Thus we define the algebra structure as follows: for any $c\in\K,f\in\K[x_1,\ldots,x_n]$
$\init_\nu(c)\cdot\init_{\nu_{\underline{\gamma}}}(f)=\init_{\nu_{\underline{\gamma}}}(cf)$.

Monomial valuations have graded algebras that can be described explicitly as is
demonstrated by the next proposition.

\begin{prop}\label{prop:graded_monomial}
For a monomial valuation $\nu_{\underline{\gamma}}$, the graded ring
$\gr_{\nu_{\underline{\gamma}}}(\K[x_1,\ldots,x_n])$ is isomorphic
(as graded rings) to $\gr_\nu(\K)[X_1,\ldots,X_n]$ (with $X_i$ of weight $\gamma_i$).
Namely
\[\gr_{\nu_{\underline{\gamma}}}(\K[x_1,\ldots,x_n])=\gr_\nu(\K)
[\init_{\nu_{\underline{\gamma}}}(x_1),\ldots,\init_{\nu_{\underline{\gamma}}}(x_n)],\]
where the $\init_{\nu_{\underline{\gamma}}}(x_i)$ are algebraically independent over
$\gr_\nu(\K)$. More precisely, the initial form of $\init_{\nu_{\underline{\gamma}}}(f)$
can be written as the polynomial
$\IN_{\underline{\gamma}}(f)(\init_\nu(x_1),\ldots,\init_\nu(x_n))$.
\end{prop}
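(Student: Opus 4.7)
My plan is to construct the candidate isomorphism explicitly via the universal property of polynomial rings and then verify bijectivity in two separate steps. Applying \cref{thm:funct} to the inclusion $\iota\colon\K\hookrightarrow\K[x_1,\ldots,x_n]$ (for which $\nu_{\underline{\gamma}}\circ\iota=\nu$, so the functoriality hypothesis holds with equality) produces a graded ring morphism $\gr(\iota)\colon\gr_\nu(\K)\to\gr_{\nu_{\underline{\gamma}}}(\K[x_1,\ldots,x_n])$. The universal property of polynomial algebras then furnishes a unique $\gr_\nu(\K)$-algebra morphism
\[\phi\colon\gr_\nu(\K)[X_1,\ldots,X_n]\longrightarrow
\gr_{\nu_{\underline{\gamma}}}(\K[x_1,\ldots,x_n]),\qquad
X_i\longmapsto\init_{\nu_{\underline{\gamma}}}(x_i).\]
Since $\init_{\nu_{\underline{\gamma}}}(x_i)$ is homogeneous of degree $\gamma_i$, the morphism $\phi$ sends a term $\theta X^u$ with $\theta$ homogeneous of degree $d$ to a homogeneous element of degree $d+u\cdot\underline{\gamma}$, matching the grading on the source.

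For surjectivity together with the explicit formula at the end of the statement, I would take an arbitrary homogeneous generator $\init_{\nu_{\underline{\gamma}}}(f)$ with $f=\sum_uc_ux^u$. The tail $f-[f]_{\underline{\gamma}}$ has strictly smaller $\nu_{\underline{\gamma}}$-value and so does not contribute to the initial form, while the surviving summands $c_ux^u$ with $u\in U_0$ all share the common value $\nu_{\underline{\gamma}}(f)$. Then \cref{prop:in_compute}(5), together with the multiplicativity of $\init_{\nu_{\underline{\gamma}}}$ on monomials, yields
\[\init_{\nu_{\underline{\gamma}}}(f)=\sum_{u\in U_0}\init_{\nu_{\underline{\gamma}}}(c_u)\cdot\init_{\nu_{\underline{\gamma}}}(x)^u=\phi\!\left(\IN_{\underline{\gamma}}(f)\right),\]
where $\init_{\nu_{\underline{\gamma}}}(c_u)$ coincides with $\init_\nu(c_u)$ via $\gr(\iota)$. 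This settles surjectivity and the explicit formula simultaneously.

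The main obstacle is injectivity, i.e., the algebraic independence of the $\init_{\nu_{\underline{\gamma}}}(x_i)$ over $\gr_\nu(\K)$. I would take $P=\sum_u\theta_uX^u$ in $\ker\phi$ and decompose $\theta_u=\sum_d\theta_{u,d}$ into homogeneous pieces. Since $\phi$ is graded it suffices to treat each total degree $\delta$ separately, which reduces to the vanishing of $\sum_u\theta_{u,d_u}\cdot\init_{\nu_{\underline{\gamma}}}(x)^u$ with $d_u=\delta-u\cdot\underline{\gamma}$. Lifting $\theta_{u,d_u}=\init_\nu(c_u)$ (with $\nu(c_u)=d_u$ when $\theta_{u,d_u}\neq0$ and $c_u=0$ otherwise) and setting $g=\sum_uc_ux^u$, this rewrites, via the additivity of the quotient map $\mathcal{I}_\delta\to\mathcal{I}_\delta/\mathcal{I}_\delta^+$, as $\init_{\nu_{\underline{\gamma}}}(g)=0$, i.e., $\nu_{\underline{\gamma}}(g)<\delta$. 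Here the crucial feature of monomial valuations enters: the defining max formula applied to $g$, whose monomials $x^u$ are pairwise distinct, forces $\nu_{\underline{\gamma}}(g)=\delta$ as soon as a single $c_u\neq0$. Hence every $c_u$, and therefore every $\theta_{u,d}$, vanishes, so $P=0$ and $\phi$ is injective.
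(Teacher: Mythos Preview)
Your argument is correct and follows essentially the same route as the paper's proof: both establish surjectivity by stripping off the tail $f-[f]_{\underline{\gamma}}$ and applying \cref{prop:in_compute}(5), and both prove injectivity by reducing to a homogeneous relation, lifting the coefficients to $c_u\in\K$ with $\nu(c_u)+u\cdot\underline{\gamma}=\delta$, and observing that the monomial $\max$-formula forces $\nu_{\underline{\gamma}}(g)=\delta$, so $g$ cannot lie in $\mathcal{I}_\delta^+$ unless every $c_u$ vanishes. Your framing via \cref{thm:funct} and the universal property is a bit more explicit than the paper's, but the substance is identical; one minor notational point is that ``$\init_{\nu_{\underline{\gamma}}}(g)=0$'' should be read as ``the class of $g$ in $\mathcal{I}_\delta/\mathcal{I}_\delta^+$ is zero'', since for nonzero $g$ the initial form is by definition nonzero---you do clarify this immediately afterward.
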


\begin{proof}
We fix $f=\sum_{u\in U}c_ux^u\in\K[x_1,\ldots,x_n]$. By definition
$\nu_{\underline{\gamma}}(f-[f]_{\underline{\gamma}})<\nu_{\underline{\gamma}}(f)=
\nu_{\underline{\gamma}}([f]_{\underline{\gamma}})$. Thus
\begin{equation}\label{eq:reduc_1}
\init_{\nu_{\underline{\gamma}}}(f)=
\init_{\nu_{\underline{\gamma}}}([f]_{\underline{\gamma}}).
\end{equation}
Furthermore, each term $c_ux^u$ of $[f]_{\underline{\gamma}}$ have equal valuation to that of
$[f]_{\underline{\gamma}}$. By \cref{prop:in_compute} (5) we have
\begin{equation}\label{eq:reduc_2}
\init_{\nu_{\underline{\gamma}}}([f]_{\underline{\gamma}})
=\sum_{u\in U_0}\init_{\nu_{\underline{\gamma}}}(c_ux^u)
=\sum_{u\in U_0}\init_\nu(c_u)\init_{\nu_{\underline{\gamma}}}(x)^u
=\IN_{\underline{\gamma}}(f)(\init_{\nu_{\underline{\gamma}}}(x)).
\end{equation}
By combining \eqref{eq:reduc_1} and \eqref{eq:reduc_2} we conclude that indeed
$\gr_{\nu_{\underline{\gamma}}}(\K[x_1,\ldots,x_n])$ is a polynomial algebra over
$\gr_\nu(\K)$. We show that the elements $\init_{\nu_{\underline{\gamma}}}(x_1),\ldots,
\init_{\nu_{\underline{\gamma}}}(x_n)$ are algebraically independent over $\gr_\nu(\K)$.
Let $F\in\gr_\nu(\K)[X_1,\ldots,X_n]$ be non-zero. We can decompose it into homogeneous
components $F=F_1+\ldots+F_N,\:F_i\neq0$, according to the grading of
$\gr_\nu(\K)[X_1,\ldots,X_n]$. For each $i=1,\ldots,N$ it is clear that
$F_i(\init_{\nu_{\underline{\gamma}}}(x))$ is homogeneous of degree $\deg(F_i)$.
Without loss of generality, we can thus suppose that $F$ is homogeneous and non-zero
and we show that $F(\init_{\nu_{\underline{\gamma}}}(x))\neq0$. Indeed, since $F$ is
homogeneous, we can write $F=\sum_{u\in U}\init_{\nu}(c_u)X^u$. Since $F$ is homogeneous,
we clearly have $F=\IN_{\underline{\gamma}}(f)$, where $f=\sum_{u\in U}c_ux^u$. Thus, by a
computation that is similar to that in the first part of the proof
\[F(\init_{\nu_{\underline{\gamma}}}(x))
=\IN_{\underline{\gamma}}(f)(\init_{\nu_{\underline{\gamma}}}(x))
=\init_{\nu_{\underline{\gamma}}}(f)\]
and $\init_{\nu_{\underline{\gamma}}}(f)\neq0$.
\end{proof}

\begin{cor}\label{cor:IN_mult}
We deduce that $\IN_{\underline{\gamma}}$ is multiplicative: for any $f,g\in\K[x_1,\ldots,x_n]$
\[\IN_{\underline{\gamma}}(fg)=\IN_{\underline{\gamma}}(f)\IN_{\underline{\gamma}}(g).\]
\end{cor}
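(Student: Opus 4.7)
The plan is to reduce the statement directly to the two facts we have already proved: the multiplicativity of the initial form for any valuation (\cref{prop:in_compute}(1) applied to $\nu_{\underline{\gamma}}$), and the explicit polynomial description of $\gr_{\nu_{\underline{\gamma}}}(\K[x_1,\ldots,x_n])$ given by \cref{prop:graded_monomial}.

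First, I would apply \cref{prop:in_compute}(1) to the valuation $\nu_{\underline{\gamma}}$ on $\K[x_1,\ldots,x_n]$, which yields
\[\init_{\nu_{\underline{\gamma}}}(fg) = \init_{\nu_{\underline{\gamma}}}(f)\cdot\init_{\nu_{\underline{\gamma}}}(g).\]
Next, I would invoke the identity from \cref{prop:graded_monomial}, namely
\[\init_{\nu_{\underline{\gamma}}}(h) = \IN_{\underline{\gamma}}(h)\bigl(\init_{\nu_{\underline{\gamma}}}(x_1),\ldots,\init_{\nu_{\underline{\gamma}}}(x_n)\bigr),\]
applied to $h=f$, $h=g$ and $h=fg$. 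Substituting gives
\[\IN_{\underline{\gamma}}(fg)\bigl(\init_{\nu_{\underline{\gamma}}}(x)\bigr) = \IN_{\underline{\gamma}}(f)\bigl(\init_{\nu_{\underline{\gamma}}}(x)\bigr)\cdot\IN_{\underline{\gamma}}(g)\bigl(\init_{\nu_{\underline{\gamma}}}(x)\bigr),\]
an identity in $\gr_{\nu_{\underline{\gamma}}}(\K[x_1,\ldots,x_n])$.

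Finally, I would use the second assertion of \cref{prop:graded_monomial}, which says that $\init_{\nu_{\underline{\gamma}}}(x_1),\ldots,\init_{\nu_{\underline{\gamma}}}(x_n)$ are algebraically independent over $\gr_\nu(\K)$. Consequently, the evaluation map $\gr_\nu(\K)[X_1,\ldots,X_n]\to\gr_{\nu_{\underline{\gamma}}}(\K[x_1,\ldots,x_n])$ sending $X_i$ to $\init_{\nu_{\underline{\gamma}}}(x_i)$ is injective, so any identity at the level of evaluations lifts to an identity of polynomials. Applying this to the previous display yields
\[\IN_{\underline{\gamma}}(fg) = \IN_{\underline{\gamma}}(f)\cdot\IN_{\underline{\gamma}}(g)\]
in $\gr_\nu(\K)[X_1,\ldots,X_n]$, as desired. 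There is no real obstacle here since all the work has already been done in the preceding proposition; the only subtle point is remembering to use algebraic independence to cancel the evaluation step.
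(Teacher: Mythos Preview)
Your proof is correct and follows essentially the same approach as the paper: both use multiplicativity of $\init_{\nu_{\underline{\gamma}}}$, translate via the identity of \cref{prop:graded_monomial}, and then cancel the evaluation by algebraic independence of the $\init_{\nu_{\underline{\gamma}}}(x_i)$. The only cosmetic difference is that you cite \cref{prop:in_compute}(1) explicitly, whereas the paper simply invokes ``the fact that $\init_\nu$ is multiplicative.''
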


\begin{proof}
We apply \cref{prop:graded_monomial} and use the fact that $\init_\nu$ is multiplicative:
\begin{align*}
\IN_{\underline{\gamma}}(fg)(\init_\nu(x_1),\ldots,\init_\nu(x_n))
&=\init_{\nu_{\underline{\gamma}}}(fg)\\
&=\init_{\nu_{\underline{\gamma}}}(f)\cdot\init_{\nu_{\underline{\gamma}}}(g) \\
&=\IN_{\underline{\gamma}}(f)(\init_\nu(x_1),\ldots,\init_\nu(x_n))\\
&\phantom{=}\cdot\IN_{\underline{\gamma}}(g)(\init_\nu(x_1),\ldots,\init_\nu(x_n))
\end{align*}
and we conclude as the elements $\init_\nu(x_1),\ldots,\init_\nu(x_n)$ are algebraically
independent.
\end{proof}

\subsection{Changing variables for initial forms}

Throughout this subsection, $(\K,\nu)$ is a fixed valued field.

\begin{lm}\label{lm:substitute}
Consider a morphism of polynomial algebras $\Psi:\K[x_1,\ldots,x_n]\to\K[y_1,\ldots,y_m]$
given by $\Psi(x_i)=g_i$ for $i=1,\ldots,n$. Equip $\K[x_1,\ldots,x_n]$ with a
monomial valuation $\nu_{\underline{\gamma}}$ and $\K[y_1,\ldots,y_m]$ with a monomial
valuation $\nu_{\underline{\delta}}$. We suppose that $\forall i=1,\ldots,n,\,
\nu_{\underline{\delta}}(g_i)\leqslant\gamma_i$.
If we identify $\gr_{\nu_1}(\K[x_1,\ldots,x_n])$ and $\gr_{\nu_2}(\K[y_1,\ldots,y_m])$
with their respective polynomial algebras $\gr_\nu(\K)[X_1,\ldots,X_n]$ and
$\gr_\nu(\K)[Y_1,\ldots,Y_m]$. Then $\gr(\Psi)$ is determined by the images of
$X_i=\init_{\nu_1}(x_i)$.
\begin{enumerate}
\item If $\nu_2(g_i)<\gamma_i$, then $\gr(\Psi)(X_i)=0$.
\item If $\nu_2(g_i)=\gamma_i$, then $\gr(\Psi)(X_i)=\init_{\nu_2}(g_i)$.
\end{enumerate}
\end{lm}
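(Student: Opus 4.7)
The plan is to apply \cref{thm:funct} directly, after carefully verifying its hypotheses. The first step is to check that the pair $(\Psi, \nu_{\underline{\gamma}}, \nu_{\underline{\delta}})$ satisfies the required inequality $\nu_{\underline{\gamma}} \geqslant \nu_{\underline{\delta}} \circ \Psi$. For $f = \sum_u c_u x^u$ one has $\Psi(f) = \sum_u c_u g^u$, and since $\nu_{\underline{\delta}}$ extends $\nu$ on $\K$,
\[
\nu_{\underline{\delta}}(c_u g^u) = \nu(c_u) + \sum_i u_i \nu_{\underline{\delta}}(g_i) \leqslant \nu(c_u) + u\cdot\underline{\gamma} = \nu_{\underline{\gamma}}(c_u x^u),
\]
using the standing hypothesis $\nu_{\underline{\delta}}(g_i) \leqslant \gamma_i$. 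Taking the max and applying (V2) yields $\nu_{\underline{\delta}}(\Psi(f)) \leqslant \nu_{\underline{\gamma}}(f)$, so $\gr(\Psi)$ is well-defined.

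Next I would justify the assertion that $\gr(\Psi)$ is fully determined by the images of the $X_i$. By \cref{prop:graded_monomial}, the source $\gr_{\nu_{\underline{\gamma}}}(\K[x_1,\ldots,x_n])$ is the polynomial algebra $\gr_\nu(\K)[X_1,\ldots,X_n]$, and the target is $\gr_\nu(\K)[Y_1,\ldots,Y_m]$. Since $\Psi$ is a $\K$-algebra morphism and both monomial valuations restrict to $\nu$ on $\K$, the definition of $\gr(\Psi)$ in \cref{thm:funct} immediately shows that its restriction to the subring $\gr_\nu(\K)$ is the canonical embedding into $\gr_\nu(\K)[Y_1,\ldots,Y_m]$. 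As $\gr(\Psi)$ is a ring morphism out of a polynomial ring over $\gr_\nu(\K)$, it is then determined by the values $\gr(\Psi)(X_i)$.

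The last step is to compute these values straight from the formula in \cref{thm:funct}:
\[
\gr(\Psi)(X_i) = \gr(\Psi)(\init_{\nu_{\underline{\gamma}}}(x_i)) = \Psi(x_i) \bmod \mathcal{I}^+_{\gamma_i}(\K[y_1,\ldots,y_m], \nu_{\underline{\delta}}) = g_i \bmod \mathcal{I}^+_{\gamma_i}.
\]
In case (1), $\nu_{\underline{\delta}}(g_i) < \gamma_i$ means $g_i \in \mathcal{I}^+_{\gamma_i}$, so its class vanishes. In case (2), $\nu_{\underline{\delta}}(g_i) = \gamma_i$, so by the very definition of the initial form, $g_i \bmod \mathcal{I}^+_{\gamma_i} = \init_{\nu_{\underline{\delta}}}(g_i)$. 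There is no genuine obstacle here; the statement is essentially a bookkeeping consequence of functoriality combined with the polynomial structure of $\gr_{\nu_{\underline{\gamma}}}(\K[x_1,\ldots,x_n])$, and the main care to take is simply ensuring that all valuations extend each other compatibly so that the abstract theorem applies.
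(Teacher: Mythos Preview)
Your proposal is correct and follows essentially the same approach as the paper: verify the inequality $\nu_{\underline{\gamma}}\geqslant\nu_{\underline{\delta}}\circ\Psi$ monomial by monomial via the ultrametric inequality, then read off the images of the $X_i$ from the explicit formula in \cref{thm:funct}. Your write-up is in fact slightly more explicit than the paper's, spelling out why $\gr(\Psi)$ is determined by the $X_i$ (via \cref{prop:graded_monomial}) and unpacking the two cases at the end, but the underlying argument is identical.
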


\begin{proof}
We show that $\Psi$ verifies the condition of \cref{thm:funct}. Take $f\in\K[x_1,\ldots,x_n]$
and write $f=\sum_{u\in U}c_ux^u$. By the ultrametric inequality we have
\begin{align*}
\nu_{\underline{\delta}}(\Psi(f))
&= \nu_{\underline{\delta}}\left(\sum_{u\in U}c_ug_1^{u_1}\cdots g_n^{u_n}\right) \\
&\leqslant\max_{u\in U}\nu_{\underline{\delta}}(c_ug_1^{u_1}\cdots g_n^{u_n}) \\
&=\max_{u\in U}\nu(c_u)+u_1\nu_{\underline{\delta}}(g_1)+\cdots+
    u_n\nu_{\underline{\delta}}(g_n) \\
&\leqslant\max_{u\in U}\nu(c_u)+u_1\gamma_1+\cdots+u_n\gamma_n \\
&=\max_{u\in U}\nu_{\underline{\gamma}}(c_ux^u)\\
&=\nu_{\underline{\gamma}}(f).
\end{align*}
Whether $\gr(\Psi)(X_i)=0$ or $\gr(\Psi)(X_i)=\init_{\nu_2}(g_i)$ comes from the point (2) of
\cref{thm:funct} and the construction of $\gr(\Psi)$.
\end{proof}

We are now ready to explain how to change variables for initial forms.

\begin{lm}\label{lm:sub}
Consider an isomorphism $\Phi:\K[x_1,\ldots,x_n]\to\K[y_1,\ldots,y_n]$. Fix
$\underline{\delta}=(\delta_1,\ldots,\delta_m)$ and
$\underline{\gamma}=(\gamma_1,\ldots,\gamma_n)$ such that $\forall i=1,\ldots,n$
\begin{align*}
    \nu_{\underline{\delta}}(\Phi(y_i)) &= \nu_{\underline{\gamma}}(y_i) \\
    \nu_{\underline{\gamma}}(\Phi^{-1}(x_i)) &= \nu_{\underline{\delta}}(x_i).
\end{align*}
Then $\gr(\Phi):\gr_{\nu_{\underline{\delta}}}(\K[x_1,\ldots,x_n])\to
\gr_{\nu_{\underline{\gamma}}}(\K[y_1,\ldots,y_n])$ is an isomorphism.

Upon identifying $\gr_{\nu_{\underline{\delta}}}(\K[y_1,\ldots,y_n])$ and
$\gr_{\nu_{\underline{\gamma}}}(\K[x_1,\ldots,x_n])$ with their respective
polynomial algebras $\gr_{\nu}(\K)[Y_1,\ldots,Y_n]$ and $\gr_{\nu}(\K)[X_1,\ldots,X_n]$,
one can see $\gr(\Phi)$ as an isomorphism between them. Thus
$\forall f\in\K[x_1,\ldots,x_n],\,\nu_{\underline{\delta}}(\Phi(f))=
\nu_{\underline{\gamma}}(f)$ and
\[\gr(\Phi)(\IN_{\underline{\gamma}}(f))=\IN_{\underline{\delta}}(\Phi(f)).\]
\end{lm}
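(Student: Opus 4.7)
The plan is to reduce the statement to the functoriality machinery of Theorem~\ref{thm:funct}, applied simultaneously to $\Phi$ and its inverse. The key point is that the hypothesis is symmetric in $\Phi$ and $\Phi^{-1}$, so whatever we establish in one direction carries over to the other.

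First I would invoke Lemma~\ref{lm:substitute} for both $\Phi$ and $\Phi^{-1}$: the stated equalities on valuations of the images of variables comfortably imply the inequality hypothesis of that lemma. This produces graded-ring maps $\gr(\Phi)$ and $\gr(\Phi^{-1})$ between $\gr_{\nu_{\underline{\gamma}}}(\K[x_1,\ldots,x_n])$ and $\gr_{\nu_{\underline{\delta}}}(\K[y_1,\ldots,y_n])$ in opposite directions, each sending the generators of the source polynomial algebra (via Proposition~\ref{prop:graded_monomial}) to the initial forms of the images of the corresponding variables. Functoriality (Theorem~\ref{thm:funct}(3)) applied to $\Phi \circ \Phi^{-1} = \mathrm{id}$ and $\Phi^{-1} \circ \Phi = \mathrm{id}$ then yields $\gr(\Phi) \circ \gr(\Phi^{-1}) = \mathrm{id}$ and symmetrically, so $\gr(\Phi)$ is a graded-ring isomorphism.

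Next, for the valuation identity $\nu_{\underline{\delta}}(\Phi(f)) = \nu_{\underline{\gamma}}(f)$: the ultrametric computation from the proof of Lemma~\ref{lm:substitute} already gives $\nu_{\underline{\delta}}(\Phi(f)) \leqslant \nu_{\underline{\gamma}}(f)$. The reverse inequality drops out by applying the same estimate to $\Phi^{-1}$ with $g := \Phi(f)$, which yields $\nu_{\underline{\gamma}}(f) = \nu_{\underline{\gamma}}(\Phi^{-1}(g)) \leqslant \nu_{\underline{\delta}}(g) = \nu_{\underline{\delta}}(\Phi(f))$. For the explicit formula, I would unwind Theorem~\ref{thm:funct}(1): by construction, $\gr(\Phi)(\init_{\nu_{\underline{\gamma}}}(f))$ is the class of $\Phi(f)$ modulo $\mathcal{I}^+_{\nu_{\underline{\gamma}}(f)}$ on the target side, and the valuation identity just proved makes this class equal to $\init_{\nu_{\underline{\delta}}}(\Phi(f))$. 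Translating both sides through Proposition~\ref{prop:graded_monomial} — which rewrites $\init_{\nu_{\underline{\gamma}}}(f) = \IN_{\underline{\gamma}}(f)(X_1,\ldots,X_n)$ and analogously for $\Phi(f)$ — gives the required identity.

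I do not expect any genuine obstacle; the content is essentially bookkeeping. The only step that needs care is ensuring that source and target graded algebras are consistently identified with polynomial rings over the same base $\gr_\nu(\K)$, so that the abstract morphism $\gr(\Phi)$ can be read off cleanly at the polynomial level and so that $\gr(\Phi)$ really does act by variable substitution as predicted by Lemma~\ref{lm:substitute}.
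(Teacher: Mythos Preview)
Your proposal is correct and follows essentially the same route as the paper: invoke Lemma~\ref{lm:substitute} for both $\Phi$ and $\Phi^{-1}$, use functoriality (Theorem~\ref{thm:funct}(3)) to conclude $\gr(\Phi)$ is an isomorphism, and then read off the explicit formula via Proposition~\ref{prop:graded_monomial}. The only cosmetic difference is that for the valuation identity $\nu_{\underline{\delta}}(\Phi(f)) = \nu_{\underline{\gamma}}(f)$ the paper appeals to the kernel description in Theorem~\ref{thm:funct}(2) (injectivity of $\gr(\Phi)$ forces the set of $f$ with strict inequality to be trivial), whereas you obtain it by applying the ultrametric inequality symmetrically to $\Phi$ and $\Phi^{-1}$; both arguments are equally short.
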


Situations as the ones described above arise naturally in the following way: consider
$\gamma\in\Gamma$ and consider $\varphi_1,\ldots,\varphi_n$, independent linear forms
on $\K^n$ with $\varphi_i=\sum_ja_{i,j}x_j$ and $a_{i,j}=0$ or $\nu(a_{i,j})=0$.
Then the isomorphism defined by $\Phi(x_i)=\varphi(y_1,\ldots,y_n)$ verifies the
conditions of \cref{lm:sub}.

\begin{proof}
Both $\Phi$ and $\Phi^{-1}$ satisfy the conditions of \cref{lm:substitute}, thus both
maps $\gr(\Phi)$ and $\gr(\Phi^{-1})$ can be defined. By \cref{thm:funct} (3), we have
$\gr(\Phi)\circ\gr(\Phi^{-1})=\gr(\Phi\circ\Phi^{-1})=\gr(\mathrm{id})=\mathrm{id}$
and likewise $\gr(\Phi^{-1})\circ\gr(\Phi)=\mathrm{id}$. Thus $\gr(\Phi)$ is an
isomorphism. By \cref{thm:funct} (2), we see that there are no
$f\in\K[x_1,\ldots,x_n]$ such that $\nu_{\underline{\delta}}(\Phi(f))<
\nu_{\underline{\gamma}}(f)$. The last statements are a consequence of
\cref{prop:graded_monomial}.
\end{proof}

\section{Lifting phase tropicalizations}\label{sec:lifting}

\subsection{Phases}
For any natural number $n\in\N$, we set a norm on $\K^n$ as follows
\[z=(z_1,\ldots,z_n)\in\K^n,\:V_\nu(z)=\max_{1\leqslant i\leqslant n}\nu(z_i).\]
Just as we defined the graded algebra $\gr_\nu(\K)$ for the valued field, we can define the
graded abelian groups. For any $\alpha\in\Gamma_\nu$ set
\begin{align*}
    \mathcal{P}_\alpha(\K^n) &:= \{z\in\K^n:\,V_\nu(x)\leqslant\alpha\}\\
    \mathcal{P}^+_\alpha(\K^n) &:= \{z\in\K^n:\,V_\nu(x)<\alpha\}.
\end{align*}
and set
\[\Gr_\nu(\K^n):=\bigsqcup_{\alpha\in\Gamma_\nu}
\frac{\mathcal{P}_\alpha(\K^n)}{\mathcal{P}^+_\alpha(\K^n)}
\quad,\quad
\widetilde{\Gr}_\nu(\K^n):=\bigoplus_{\alpha\in\Gamma_\nu}
\frac{\mathcal{P}_\alpha(\K^n)}{\mathcal{P}^+_\alpha(\K^n)}.\]

There is a clear injection $\Gr_\nu(\K^n)\hookrightarrow\widetilde{\Gr}_\nu(\K^n)$.
If the context is clear, we abbreviate the $\alpha$ homogeneous component
as $G_\alpha:=\mathcal{P}_\alpha(\K^n)/\mathcal{P}^+_\alpha(\K^n)$. Just like $g_\alpha$,
the abelian group $G_\alpha$ is a $\kappa_\nu$ vector space.
We have an initial form mapping
\[\Init_\nu:\K^n\setminus\{0\}\to\Gr_\nu(\K^n),\:z\mapsto z\mod\mathcal{P}^+_\alpha(\K^n),
\text{ where }\alpha=V_\nu(x).\]

\begin{defn}
We call $\Gr_\nu(\K)$ the \textit{phase space} for $(\K,\nu)$.
We call its elements \textit{forms}. A form is thus a couple $(\alpha,\theta)$
where $\theta\in\kappa_\nu^n$. The associated degree $\alpha$ is the \textit{value}
of the form and $\theta$ is its \textit{phase}.

Let $X=\mathbb{V}(I)\subset\K^n$ be an affine algebraic set.
We define the \textit{phase tropicalization of} $X$ as
\[\Init_\nu(X):=\{\Init_\nu(z):\,z\in X\}\subset\Gr_\nu(\K^n).\]
This assignment is clearly functorial: any (set-theoretic) map $u:X\to Y$ gives
rise to a map $\Init_\nu(u):\Init_\nu(X)\to\Init_\nu(Y)$, the phase forms of degree $\alpha$.

We also set $\Init_\nu(X)_\alpha:=\Init_\nu(X)\cap G_\alpha$
\end{defn}

\begin{ex}
We have $\Init_\nu(\K^n\setminus\{0\})=\Gr_\nu(\K^n)\setminus(\Gamma_\nu\times\{0\})$.
\end{ex}

Similarly to the graded algebra construction, $\Gr_\nu$ is a functorial construction.

\begin{lm}\label{prop:funct_Gr}
Suppose $(\K,\nu)$ and $(\mathbb{L},\mu)$ are two valued fields with values in a 
common valued group $\Gamma$. They induce the norms $V_\nu$ and $V_\mu$ on $\K^n$ and
$\mathbb{L}$ respectively. Let $\phi:\K^n\to\mathbb{L}^m$ be a mapping (any kind),
such that $V_\nu\circ\phi\leqslant V_\mu$. Then there exists a map $\Gr(\phi):
\Gr_\nu(\K^n)\to\Gr_\mu(\mathbb{L}^m)$, sending $\Init_\nu(z)$ to
$z\mod\mathcal{P}_{\alpha}(\mathbb{L}^m)$ where $\alpha=V_\nu(z)$.
The association $\phi\mapsto\Gr(\phi)$ is functorial.
\end{lm}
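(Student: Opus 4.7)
The approach is a direct analogue of the functoriality construction for graded rings (\cref{thm:funct}), adapted to the phase space level by level. The hypothesis should be read as $V_\mu(\phi(z))\le V_\nu(z)$ for every $z\in\K^n$ (reversing the composition order that appears in the display, since $\phi\colon\K^n\to\mathbb{L}^m$), and it ensures that $\phi$ preserves both filtrations: $\phi(\mathcal{P}_\alpha(\K^n))\subseteq\mathcal{P}_\alpha(\mathbb{L}^m)$ and $\phi(\mathcal{P}^+_\alpha(\K^n))\subseteq\mathcal{P}^+_\alpha(\mathbb{L}^m)$ for every $\alpha\in\Gamma$.

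Next I would define $\Gr(\phi)$ component by component. For each $\alpha\in\Gamma_\nu$, and a class $[z]\in G_\alpha(\K^n)=\mathcal{P}_\alpha(\K^n)/\mathcal{P}^+_\alpha(\K^n)$ represented by some $z\in\mathcal{P}_\alpha(\K^n)$, set $\Gr(\phi)_\alpha([z])=[\phi(z)]\in\mathcal{P}_\alpha(\mathbb{L}^m)/\mathcal{P}^+_\alpha(\mathbb{L}^m)$. Well-definedness amounts to showing that whenever $z-z'\in\mathcal{P}^+_\alpha(\K^n)$, one also has $\phi(z)-\phi(z')\in\mathcal{P}^+_\alpha(\mathbb{L}^m)$; this uses the additivity of $\phi$ (implicit in the intended applications, where $\phi$ is a polynomial map or is induced by a ring homomorphism) together with the inclusion $\phi(\mathcal{P}^+_\alpha(\K^n))\subseteq\mathcal{P}^+_\alpha(\mathbb{L}^m)$ from the first step. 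Assembling the family $\{\Gr(\phi)_\alpha\}_{\alpha\in\Gamma_\nu}$ yields the desired set map $\Gr(\phi)\colon\Gr_\nu(\K^n)\to\Gr_\mu(\mathbb{L}^m)$, and unfolding it on $\Init_\nu(z)=[z]$ with $\alpha=V_\nu(z)$ gives the stated formula.

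Functoriality would then be checked on representatives. The identity $\mathrm{id}\colon\K^n\to\K^n$ trivially satisfies the hypothesis and induces the identity on each $G_\alpha$, so $\Gr(\mathrm{id})=\mathrm{id}$. For composable maps $\phi\colon\K^n\to\mathbb{L}^m$ and $\psi\colon\mathbb{L}^m\to\mathbb{M}^\ell$ satisfying the norm-decreasing hypothesis, the chain of inequalities $V_\eta(\psi\phi(z))\le V_\mu(\phi(z))\le V_\nu(z)$ shows that the composition is also admissible, and evaluating on classes gives
\[\Gr(\psi\circ\phi)([z])=[\psi(\phi(z))]=\Gr(\psi)([\phi(z)])=\bigl(\Gr(\psi)\circ\Gr(\phi)\bigr)([z]).\]

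The only non-routine point, and therefore the main obstacle, is the well-definedness of $\Gr(\phi)_\alpha$ on the quotient $G_\alpha$; this is precisely what forces the ``any kind of mapping'' to be tacitly interpreted as being at least additive (otherwise bounding $V_\mu\circ\phi$ does not force $\phi$ to descend through the equivalence ``agree modulo $\mathcal{P}^+_\alpha$''). Once that subtlety is resolved, the rest of the argument is a direct manipulation on representatives, mirroring the proof of \cref{thm:funct}.
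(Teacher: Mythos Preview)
Your approach is exactly what the paper does: its entire proof reads ``The proof is identical to that of \cref{thm:funct}. One needs to replace the groups $\mathcal{I}_\alpha,\mathcal{I}^+_\alpha$ with the corresponding $\mathcal{P}_\alpha,\mathcal{P}^+_\alpha$.'' Your observation about additivity is well taken and is a genuine gap in the paper's statement: the phrase ``a mapping (any kind)'' is too permissive, since without additivity there is no reason for $z-z'\in\mathcal{P}^+_\alpha$ to force $\phi(z)-\phi(z')\in\mathcal{P}^+_\alpha$, and the paper only ever applies the lemma to linear maps (\cref{lm:Gr_sub}, \cref{ex:triangular}), where the issue disappears.
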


The proof is identical to that of \cref{thm:funct}. One needs to replace the groups
$I_\alpha,I^+_\alpha$ with the corresponding $\mathcal{P}_\alpha,\mathcal{P}^+_\alpha$.
The following result mirrors the analog result for valued rings, \ie \cref{lm:sub}.
Its proof stems from the definitions.

\begin{lm}\label{lm:Gr_sub}
Consider a bijection $\phi:\K^n\to\K^n$, such that $V_\nu\circ\phi=V_\nu$. Then both
$\Gr(\phi)$ and $\Gr(\phi^{-1})$ are defined and are inverse to each other. Hence
\[\forall z\in\K^n\setminus\{0\},\:\Gr(\phi)(\Init_\nu(z))=\Init_\nu(\phi(z)).\]
\end{lm}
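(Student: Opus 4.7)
The plan is to mimic the proof of \cref{lm:sub} in the vector-space setting, relying on the functoriality established in \cref{prop:funct_Gr}. First I would verify that both $\phi$ and $\phi^{-1}$ satisfy the hypothesis of \cref{prop:funct_Gr}, namely that $V_\nu$ is bounded above by itself when precomposed. For $\phi$ this is immediate from $V_\nu\circ\phi=V_\nu$. For $\phi^{-1}$, one applies the same identity at $\phi^{-1}(z)$: since $V_\nu(z)=V_\nu(\phi(\phi^{-1}(z)))=V_\nu(\phi^{-1}(z))$, we also have $V_\nu\circ\phi^{-1}=V_\nu$. So \cref{prop:funct_Gr} produces the two induced maps $\Gr(\phi)$ and $\Gr(\phi^{-1})$ between $\Gr_\nu(\K^n)$ and itself.

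Next I would invoke functoriality of $\Gr(-)$ from the same lemma: the assignment $\phi\mapsto\Gr(\phi)$ is compatible with composition and sends $\mathrm{id}$ to $\mathrm{id}$. Therefore
\[\Gr(\phi^{-1})\circ\Gr(\phi)=\Gr(\phi^{-1}\circ\phi)=\Gr(\mathrm{id}_{\K^n})=\mathrm{id}_{\Gr_\nu(\K^n)},\]
and symmetrically $\Gr(\phi)\circ\Gr(\phi^{-1})=\mathrm{id}$, giving that the two maps are mutually inverse bijections of the phase space.

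Finally, the identity $\Gr(\phi)(\Init_\nu(z))=\Init_\nu(\phi(z))$ is essentially built into the construction from \cref{prop:funct_Gr}: by definition $\Gr(\phi)$ sends $\Init_\nu(z)$ to the class of $\phi(z)$ modulo $\mathcal{P}^+_\alpha(\K^n)$ where $\alpha=V_\nu(z)$. Because $V_\nu(\phi(z))=V_\nu(z)=\alpha$, this class is precisely $\Init_\nu(\phi(z))\in G_\alpha$. There is no real obstacle here; the whole argument is a formal transcription of \cref{lm:sub}, with the filtration $(\mathcal{P}_\alpha,\mathcal{P}^+_\alpha)$ on $\K^n$ playing the role previously played by $(\mathcal{I}_\gamma,\mathcal{I}^+_\gamma)$ on the polynomial ring, so the only thing to be careful about is keeping track of where the hypothesis $V_\nu\circ\phi=V_\nu$ (as opposed to mere $\leqslant$) is used, namely precisely to pass from ``$\Gr(\phi)$ is defined'' to ``$\Gr(\phi)$ is an isomorphism''.
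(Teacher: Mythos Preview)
Your proposal is correct and matches the paper's approach: the paper simply states that the proof ``stems from the definitions'' and mirrors \cref{lm:sub}, which is exactly the functoriality argument you spell out. Your write-up is a faithful expansion of that sketch, with the helpful observation that equality $V_\nu\circ\phi=V_\nu$ (rather than mere inequality) is precisely what is needed both to define $\Gr(\phi^{-1})$ and to identify $\Gr(\phi)(\Init_\nu(z))$ with $\Init_\nu(\phi(z))$.
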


\begin{ex}\label{ex:triangular}
Consider the bijective linear mapping $\phi:\K^n\to\K^n$ defined by
$\phi(z)=(\phi_1(z),\ldots,\phi_n(z))$, where
\[\phi_i(z)=z_i+\sum_{j>i}a_{i,j}z_j,\quad a_{i,j}\in\Or_\nu.\]
We show that $\phi$ satisfies $V_\nu\circ\phi=V_\nu$. It is clearly invertible.
For any index $i$ and all $z\in\K^n\setminus\{0\}$
\[\nu(\phi_i(z))\leqslant\max\Big(\{\nu(z_i)\}\cup\{\nu(a_{i,j}z_j:j>i\}\Big)
\leqslant\max\{\nu(z_j):j\geqslant i\}\leqslant V_\nu(z).\]
Thus $V_\nu\circ\phi\leqslant V_\nu$.
For the converse inequality, fix $z\in\K^n\setminus\{0\}$ and let $i$ be the highest index
for which $V_\nu(z)=\nu(z_i)$. Thus, for all $j>i,\,\nu(a_{i,j}z_j)\leqslant
\nu(z_j)<\nu(z_i)$. By \cref{rke:basics} (4) we have $\nu(\phi_i(z))=V_\nu(z)$.
\end{ex}

\begin{rke}
Observe how $\Gr_\nu(\K^n)$ is constructed as a disjoint union, whereas $\gr_\nu(\K)$
is constructed as a direct sum. In \cite{Duc}, the author defines a notion of
\textit{annélloïde} (which could be roughly translated to \textit{ringoid}), which is
a disjoint union $\bigsqcup_{m\in M}A_m$, where $M$ is an abelian monoid, the $A_m$ are
abelian groups and there is a well-defined notion of homogeneous multiplication, \ie
compatible morphisms $A_m\times A_{m'}\to A_{m+m'}$. In such structures, one can multiply
homogeneous elements and add homogeneous elements of same degree.

We choose to keep $\gr_\nu(\K)$ as a graded algebra for the results concerning
substitutions of initial forms (\cf \cref{lm:sub}).
We also wish to keep $\Gr_\nu(\K^n)$ as a disjoint union as to give a more geometric
nature to this object, this is illustrated in \cref{prop:split_pres}.
\end{rke}

We end this paragraph by relating the graded module over $\K^n$ to the graded algebra
$\gr_\nu(\K)$.

\begin{prop}\label{prop:phase_presentation}
For any $\alpha\in\Gamma$ there is a canonical isomorphism of abelian groups
\[\Theta_\alpha:G_\alpha\xrightarrow{\simeq} g_\alpha^n.\]
They combine in canonical isomorphisms of $\kappa_\nu$ vector spaces
\[\widetilde{\Gr}_\nu(\K^n)\simeq\bigoplus_{\alpha\in\Gamma}(g_\alpha)^n
\simeq(\gr_\nu(\K))^n.\]
We also have a homogeneous bijection between ringoids
$\Gr_\nu(\K^n)\simeq\bigsqcup_{\alpha\in\Gamma}g_\alpha^n$.
\end{prop}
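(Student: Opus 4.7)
The plan is to reduce everything to the observation that the norm $V_\nu$ and the subgroups $\mathcal{P}_\alpha, \mathcal{P}^+_\alpha$ are defined purely componentwise, so the isomorphism is essentially tautological once one unpacks the definitions.

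First I would observe that for any $z=(z_1,\ldots,z_n)\in\K^n$ and any $\alpha\in\Gamma$, the condition $V_\nu(z)\leqslant\alpha$ is equivalent to $\nu(z_i)\leqslant\alpha$ for every index $i$, and similarly $V_\nu(z)<\alpha$ is equivalent to $\nu(z_i)<\alpha$ for every $i$. This gives the product decompositions
\[\mathcal{P}_\alpha(\K^n)=\mathcal{I}_\alpha(\K,\nu)^n\quad\text{and}\quad
\mathcal{P}^+_\alpha(\K^n)=\mathcal{I}^+_\alpha(\K,\nu)^n.\]
Since taking quotients commutes with finite products of abelian groups, the natural componentwise projection induces an isomorphism
\[\Theta_\alpha\colon G_\alpha=\mathcal{I}_\alpha^n/(\mathcal{I}^+_\alpha)^n
\xrightarrow{\ \simeq\ }(\mathcal{I}_\alpha/\mathcal{I}^+_\alpha)^n=g_\alpha^n,\]
sending the class of $(z_1,\ldots,z_n)$ to $(\init_\nu(z_1),\ldots,\init_\nu(z_n))$ with the convention that a coordinate $z_i$ with $\nu(z_i)<\alpha$ contributes $0$. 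The $\kappa_\nu$-linearity follows at once, since scalar multiplication by $\bar{a}\in\kappa_\nu$ is defined componentwise on both sides.

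Next I would assemble these componentwise isomorphisms. For the vector space picture, summing over $\alpha\in\Gamma$ yields
\[\widetilde{\Gr}_\nu(\K^n)=\bigoplus_{\alpha\in\Gamma}G_\alpha
\xrightarrow{\bigoplus_\alpha\Theta_\alpha}\bigoplus_{\alpha\in\Gamma}g_\alpha^n,\]
and then swapping the finite direct product with the direct sum (legitimate for a finite product of $\kappa_\nu$-vector spaces) produces $\left(\bigoplus_{\alpha\in\Gamma}g_\alpha\right)^n=\gr_\nu(\K)^n$. For the ringoid (disjoint union) statement, the same componentwise maps $\Theta_\alpha$ assemble into a bijection $\Gr_\nu(\K^n)\simeq\bigsqcup_{\alpha\in\Gamma}g_\alpha^n$ that preserves the degree grading; this is canonical because it depends only on the bijection $\K^n=\K\times\cdots\times\K$ and the intrinsic definition of the filtrations.

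There is essentially no obstacle here beyond careful bookkeeping. The only subtlety worth flagging is that when $z_i=0$ or $\nu(z_i)<\alpha$, its contribution to $\Theta_\alpha$ must be $0\in g_\alpha$ rather than undefined; this is consistent with extending $\init_\nu$ by $\init_\nu(0)=0$ as done earlier in the paper, and is what allows $\Init_\nu$ on $\K^n\setminus\{0\}$ to land in a single homogeneous stratum while tolerating vanishing components. Once this convention is in place, the commutative diagrams identifying $\Theta_\alpha$ with the quotient of the product projection are immediate.
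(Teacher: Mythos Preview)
Your argument is correct. The map you construct is the same as the paper's, but your justification is more structural: you notice that $\mathcal{P}_\alpha(\K^n)=\mathcal{I}_\alpha^n$ and $\mathcal{P}^+_\alpha(\K^n)=(\mathcal{I}^+_\alpha)^n$, and then invoke the general fact that a quotient of a finite product is the product of the quotients. The paper instead defines $\Theta_\alpha$ explicitly on representatives, verifies the homomorphism property by a case analysis on the four possible configurations of $\nu(z_i),\nu(z_i')$ relative to $\alpha$ (leaning on \cref{prop:in_compute}), and then checks surjectivity and injectivity by hand. Your route is cleaner and makes the canonicity transparent; the paper's route is more pedestrian but has the virtue of spelling out exactly which element of $g_\alpha^n$ corresponds to a given $\Init_\nu(z)$, which is the form in which the identification is used later (e.g.\ in \cref{prop:split_pres} and the proof of \cref{thm:phase_trop}).
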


\begin{proof}
Consider $Z=\Init_\nu(z)\in G_\alpha\setminus\{0\}$. Thus $z=(z_1,\ldots,z_n)\in\K^n$ and
$V_\nu(z)=\max_i\nu(z_i)=\alpha$. Define
\[\Theta_\alpha(Z)=\theta=(\theta_1,\ldots,\theta_n)\quad\text{where}\quad
\theta_i:=\left\{\begin{array}{ll}
    \init_\nu(z_i) & \text{if }\nu(z_i)=\alpha \\
    0 & \text{if }\nu(z_i)<\alpha.
\end{array}\right.\]
The map $\Theta_\alpha$ is a group morphism. From the above construction, it is clear that
for any $a\in\Or_\nu^\times$ we have $\Theta(\overline{a}\cdot Z)=
\overline{a}\cdot\Theta_\alpha(Z)$. Let $Z,Z'\in G_\alpha$. Suppose $Z=\Init_\nu(z)$ and
$Z'=\Init_\nu(z')$. Define $w\in\K^n$ as follows
\[w_i=\left\{\begin{array}{ll}
    z_i+z_i' & \text{if }\nu(z_i)=\nu(z_i')=\nu(z_i+z_i')=\alpha \\
    z_i & \text{if } \nu(z_i')<\nu(z_i)=\alpha \\
    z_i' & \text{if } \nu(z_i)<\nu(z_i')=\alpha \\
    0 & \text{if } \nu(z_i+z_i')<\alpha.
\end{array}\right.\]
Observe that $y+y'-w\in\mathcal{P}_\alpha^+(\K^n)$, so that $Z+Z'=\Init_\nu(w)$.
By the properties and case work in \cref{prop:in_compute}, we have that
$\Theta_\alpha(\Init_\nu(w))=\Theta_\alpha(Z)+\Theta_\alpha(Z')$.

The morphism $\Theta_\alpha$ is surjective. Consider $\theta\in g_\alpha^n$. Without loss
of generality, suppose $\theta_i=\init_\nu(z_i)$ for $i=1,\ldots,m$ and $\theta_i=0$ for
$i=m+1,\ldots,n$. Set the vector $z=(z_1,\ldots,z_m,0,\ldots,0)\in\K^n$. We have
$\theta=\Theta_\alpha(Z)$ for $Z=\Init_\nu(z)$.

The morphism $\Theta_\alpha$ is injective. Indeed, by construction, if $Z\neq0$, then
$\Theta_\alpha(Z)\neq0$.
\end{proof}

We henceforth identify the phase space with $\bigsqcup_{\alpha\in\Gamma}g_\alpha^n$. Thus
a phase will be an element of type $(\alpha,\theta)$ where $\alpha\in\Gamma$ and
$\theta\in g_\alpha^n$.

\subsection{Splittings}
Abstract graded algebras can have a very complicated structure. We present our objects
in the particular case of valuations admitting a splitting or cross-section.

\begin{defn}
Let $(\K,\nu)$ be a valued field with value group $\Gamma_\nu$. A
\textit{multiplicative splitting} or \textit{cross-section} is a family of elements
$(t^\gamma)_{\gamma\in\Gamma_\nu}$ such that the mapping $\Gamma_\nu\to\K^\times,\:\gamma
\mapsto t^\gamma$ is a group morphism and $\nu(t^\gamma)=\gamma$ for all $\gamma\in\Gamma_\nu$. 
\end{defn}

Cross-sections are hard to find. For fields $\K$ which are $\aleph_1$-saturated (in the
model-theoretic sense), one can construct a splitting (\cf \cite[3.3.39, 3.3.40]{ADH} and
\cite[Thm. A.10, Variant A.11]{PC} for more ample details).

\begin{prop}\label{prop:split_exist}
Suppose that for any $n\in\N$ and $x\in\Or_\nu^\times$, the equation $X^n=x$ has a solution
in $\K$ (\eg $\K$ is algebraically closed, or perfect and henselian).
Then $(\K,\nu)$ admits a section.
\end{prop}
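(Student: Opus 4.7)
The plan is to run a Zorn's lemma argument on the poset of partial splittings. Let $\mathcal{S}$ be the set of pairs $(\Delta,s)$ where $\Delta\leqslant\Gamma_\nu$ is a subgroup and $s\colon\Delta\to\K^\times$ is a group morphism satisfying $\nu\circ s=\mathrm{id}_\Delta$, partially ordered by extension. The poset is nonempty ($\Delta=\{0\}$, $s(0)=1$) and every chain admits an upper bound via union, so Zorn gives a maximal element $(\Delta_0,s_0)$. The entire content of the proposition is then the claim that maximality forces $\Delta_0=\Gamma_\nu$.

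Assume for contradiction $\gamma\in\Gamma_\nu\setminus\Delta_0$; I would split into two cases according to the nature of $\gamma$ modulo $\Delta_0$. In the \emph{torsion-free case}, no positive multiple of $\gamma$ lies in $\Delta_0$, so $\Delta_0+\Z\gamma=\Delta_0\oplus\Z\gamma$; I pick any $a\in\K^\times$ with $\nu(a)=\gamma$ and define $s(\delta+k\gamma)=s_0(\delta)a^k$, which is automatically well-defined and contradicts maximality. In the \emph{torsion case}, some $n>0$ satisfies $n\gamma\in\Delta_0$; taking $n$ minimal, I must realize $\gamma$ by an element $a\in\K^\times$ with $\nu(a)=\gamma$ \textbf{and} $a^n=s_0(n\gamma)$, so that the same extension formula $s(\delta+k\gamma)=s_0(\delta)a^k$ is consistent.

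This last compatibility is exactly the step where the hypothesis is consumed, and it is the main obstacle, since without it the Zorn argument stalls at the torsion case. To produce the required $a$, I start from an arbitrary $b\in\K^\times$ with $\nu(b)=\gamma$; then $\nu(b^n/s_0(n\gamma))=n\gamma-n\gamma=0$, so $x:=b^n/s_0(n\gamma)\in\Or_\nu^\times$. By hypothesis there exists $c\in\K^\times$ with $c^n=x$, and since $c^n$ is a unit we get $\nu(c)=0$. Setting $a:=b/c$ gives $\nu(a)=\gamma$ and $a^n=s_0(n\gamma)$, as needed, contradicting the maximality of $(\Delta_0,s_0)$.

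It remains to justify the parenthetical examples. If $\K$ is algebraically closed, the polynomial $X^n-x$ has a root for any $x\in\K^\times$, so the hypothesis is trivial. If $\K$ is perfect and henselian, then for $n$ coprime to the residue characteristic $p$, Hensel's lemma applied to $X^n-x$ (whose residue image has a simple root by the fact that $\kappa_\nu$ is a field and $X^n-\bar{x}$ is separable) lifts an $n$-th root; while for $n$ a power of $p$, perfectness of $\kappa_\nu$ produces a root of $X^n-\bar{x}$ in the residue field and henselianity lifts it. This handles all $n$ by multiplicativity, completing the verification.
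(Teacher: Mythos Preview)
Your main argument is correct. The paper, however, takes a much shorter route: it observes that the hypothesis makes $\Or_\nu^\times$ a divisible abelian group, hence an injective $\Z$-module, so the short exact sequence
\[0\longrightarrow\Or_\nu^\times\longrightarrow\K^\times\longrightarrow\Gamma_\nu\longrightarrow0\]
splits, yielding the section immediately. Your Zorn's lemma construction is in effect a direct, self-contained proof of this splitting (essentially reproving Baer's criterion for this particular injective object). This buys you independence from homological algebra at the cost of length; conceptually the two proofs are close, and both consume the hypothesis at exactly the same moment---extracting an $n$-th root of a unit to adjust a provisional lift.

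One gap worth flagging in your treatment of the parenthetical ``perfect and henselian'' example: for $n$ coprime to the residue characteristic you assert that $X^n-\bar x$ has a simple root in $\kappa_\nu$, but separability only tells you that any root it \emph{has} is simple; it does not produce one (take $\kappa_\nu=\mathbb{F}_p$ and $\bar x$ a non-residue modulo $n$). Likewise, for $n$ a power of $p$ the derivative of $X^n-x$ vanishes, so the simple form of Hensel does not apply and the lift requires more care. The paper does not attempt to justify this parenthetical at all, so you are not being held to a higher standard---but as written your sentence overclaims, and indeed ``perfect and henselian'' alone appears insufficient without further assumptions on $\kappa_\nu$.
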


\begin{proof}
The condition of the proposition translates to $\Or_\nu^\times$ being divisible. Thus
$\Or_\nu^\times$ is an injective $\Z$-module. Consequently the short exact sequence
\[0\to\Or_\nu^\times\to\K^\times\to\Gamma_\nu\to0\]
is split and it yields a section.
\end{proof}

\begin{prop}\label{prop:split_pres}
Let $(\K,\nu)$ be a valued field with splitting $(t^\gamma)_{\gamma\in\Gamma_\nu}$.
\begin{enumerate}
\item The graded algebra $\gr_\nu(\K)$ has a presentation as a group algebra.
More precisely, the mapping
\[\varphi\,:\:\kappa_\nu\left[t^{\Gamma_\nu}\right]\to\gr_\nu(\K),\quad
\overline{a}\cdot[\gamma]\mapsto\init_\nu(at^\gamma)\]
is an isomorphism.
\item One has a natural bijection
\[\vartheta\,:\:\Gamma_\nu\times\kappa_\nu^n\to\Gr_\nu(\K^n),\quad
(\alpha,(\overline{z_1},\ldots,\overline{z_n}))\mapsto
\Init_\nu(z_1t^{\alpha},\ldots,z_nt^\alpha).\]
It is compatible with scalar multiplication by homogeneous elements of $\gr_\nu(\K)$,
where the multiplication on $\Gamma_\nu\times\kappa_\nu^n$ is defined as follows:
$\init_\nu(b)\cdot(\alpha,\overline{a})=(\alpha+\delta,\overline{abt^{-\delta}})$ where
$\delta=\nu(a)$.
\end{enumerate}
\end{prop}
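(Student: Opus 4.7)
The plan is to establish both parts by first verifying that $\varphi$ and $\vartheta$ are well-defined, then checking bijectivity and the stated algebraic compatibility; both parts ultimately reduce to translating between two description of the same graded object using the multiplicative splitting.

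For (1), I would first check that $\varphi$ descends to $\kappa_\nu[t^{\Gamma_\nu}]$: if $a - a' \in \m_\nu$, then $\nu((a-a')t^\gamma) < \gamma$, so $\init_\nu(at^\gamma) = \init_\nu(a't^\gamma)$ by \cref{rke:basics}(4). Extending $\kappa_\nu$-linearly, $\varphi$ is additive, and multiplicativity on generators follows from the splitting property $t^{\gamma+\delta} = t^\gamma t^\delta$ together with \cref{prop:in_compute}(1):
\[\varphi(\overline{a}[\gamma]\cdot\overline{b}[\delta]) = \init_\nu(abt^{\gamma+\delta}) = \init_\nu(at^\gamma)\init_\nu(bt^\delta) = \varphi(\overline{a}[\gamma])\varphi(\overline{b}[\delta]).\]
Since both source and target decompose as direct sums indexed by $\Gamma_\nu$ and $\varphi$ respects this grading, bijectivity reduces to each component $\kappa_\nu\cdot[\gamma] \to g_\gamma$. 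Injectivity is immediate, and surjectivity follows from the normalization trick: any nonzero homogeneous element of degree $\gamma$ is $\init_\nu(x)$ for some $x \in \K^\times$ with $\nu(x) = \gamma$, and then $xt^{-\gamma} \in \Or_\nu^\times$ with $\varphi(\overline{xt^{-\gamma}}\,[\gamma]) = \init_\nu(x)$.

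For (2), I would leverage \cref{prop:phase_presentation} to identify $\Gr_\nu(\K^n)$ with $\bigsqcup_\alpha g_\alpha^n$, and then use part (1) to identify each $g_\alpha$ with $\kappa_\nu$ via $\init_\nu(at^\alpha) \mapsto \overline{a}$. Composing these identifications yields the desired bijection with $\Gamma_\nu \times \kappa_\nu^n$, and direct unwinding shows the composite coincides with $\vartheta$, with the convention that $\vartheta(\alpha, 0)$ designates the zero element of $G_\alpha$. For the scalar multiplication compatibility, setting $\delta = \nu(b)$ so that $bt^{-\delta} \in \Or_\nu^\times$, multiplicativity of $\init_\nu$ applied coordinatewise yields
\[\init_\nu(b)\cdot\vartheta(\alpha,(\overline{a_1},\ldots,\overline{a_n})) = \Init_\nu\bigl((bt^{-\delta})a_1 t^{\alpha+\delta},\ldots,(bt^{-\delta})a_n t^{\alpha+\delta}\bigr),\]
which is exactly $\vartheta(\alpha+\delta, (\overline{bt^{-\delta}a_1},\ldots,\overline{bt^{-\delta}a_n}))$, matching the prescribed action.

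The main obstacle is more conceptual than computational: one must carefully track that $\Gr_\nu(\K^n)$ is a disjoint union (so the zero vectors in different $G_\alpha$ are distinct objects), and verify that $\vartheta(\alpha, 0)$ selects the correct one; this subtlety is what forces the stated domain $\Gamma_\nu \times \kappa_\nu^n$ rather than a quotient identifying all zeros. Once this bookkeeping is in place, the rest of the argument is a direct translation between the group algebra presentation of $\gr_\nu(\K)$ established in (1) and the componentwise description of the phase space supplied by \cref{prop:phase_presentation}.
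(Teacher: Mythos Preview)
Your proposal is correct and follows essentially the same approach as the paper: for (1) both arguments reduce bijectivity to the graded pieces via the normalization $x \mapsto xt^{-\nu(x)}$, and for (2) both invoke \cref{prop:phase_presentation} together with the identification $g_\alpha \cong \kappa_\nu$ coming from the splitting. You are in fact more thorough than the paper (checking well-definedness and multiplicativity explicitly), and you correctly take $\delta = \nu(b)$ in the scalar-multiplication check, which is what the formula requires despite the apparent typo ``$\delta = \nu(a)$'' in the statement.
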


\begin{proof}
Consider (1). The map $\varphi$ is injective: for a formal expression $a=\sum_{\gamma}
\overline{a_\gamma}[\gamma]$ is sent to $\sum_{\gamma}\init_\nu(at^\gamma)$,
but each term $\init_\nu(at^\nu)$ is of differing degree, thus
$\sum_{\gamma}\init_\nu(at^\gamma)=0$ if and only if
$\forall\gamma,\:\overline{a_\gamma}[\gamma]=0$, \ie $a=0$.

The map $\varphi$ is surjective. any element of $\gr_\nu(\K)$ is of the form
$\theta=\sum_i\init_\nu(r_i)$. Let $\gamma_i=\nu(r_i)$, so that
$\theta=\varphi\left(\sum_i\overline{r_it^{-\gamma_i}}[\gamma_i]\right)$.

Point (2) is a simple consequence of \cref{prop:phase_presentation} paired with
the fact that $\kappa_\nu$ is isomorphic to every $g_\gamma$ via
$\overline{a}\mapsto\init_\nu(at^\gamma)$.
\end{proof}

\begin{ex}
We continue \cref{ex:Hahn}. The above gives a formal proof of the fact that
$\gr_{\ord_t}(F[[t^\Gamma]])=F[t^\Gamma]$.
The isomorphism $\vartheta$ allows us to deconstruct any phase $\theta\in\Gr_\nu(\K^n)$.
Take any $z=(z_1,\ldots,z_n)\in\K^n$ and write $z_i=A_it^{\alpha_i}+o(t^{\alpha_i})$.
Thus $V_\nu(x)=\alpha=\max_i\alpha_i$. Construct the vector $B=(B_1,\ldots,B_n)\in F^n$
\[B_i=\left\{\begin{array}{ll}
    A_i & \text{if }\alpha_i=\alpha  \\
    0 & \text{if }\alpha_i<\alpha
\end{array}\right.\]
We write this as $z=Bt^\alpha+o(t^\alpha)$. Thus composing $\Init_\nu$ with $\vartheta^{-1}$
gives us a more explicit formula for the initial form of vectors: it that takes the vector
$Bt^\alpha+o(t^\alpha)$ and sends it to $(\alpha,B)$.
\end{ex}

\subsection{A lifting theorem}
For an ideal $I$ of $\K[x_1,\ldots,x_n]$ and $\alpha\in\Gamma$, we define
$\IN_{\underline{\alpha}}(I)$ as the ideal generated by
$\{\IN_{\underline{\alpha}}(f):\,f\in I\}$.

\begin{lm}\label{lm:INIT}
For any ideals $I,I_1,I_2$ of $\K[x_1,\ldots,x_n]$ we have
\begin{align*}
&\IN_{\underline{\alpha}}(I)\subseteq\IN_{\underline{\alpha}}(\sqrt{I})
\subseteq\sqrt{\IN_{\underline{\alpha}}(I)}
\quad\text{and} \\
&\IN_{\underline{\alpha}}(I_1)\cdot\IN_{\underline{\alpha}}(I_2)\subseteq
\IN_{\underline{\alpha}}(I_1\cap I_2)\subseteq
\IN_{\underline{\alpha}}(I_1)\cap\IN_{\underline{\alpha}}(I_2).
\end{align*}
\end{lm}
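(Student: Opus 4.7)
The plan is to derive all four inclusions from two simple facts: the monotonicity of $\IN_{\underline\alpha}(-)$ with respect to ideal inclusion (since if $I\subseteq J$ then every generator $\IN_{\underline\alpha}(f)$ of $\IN_{\underline\alpha}(I)$ is already a generator of $\IN_{\underline\alpha}(J)$), and the multiplicativity $\IN_{\underline\alpha}(fg)=\IN_{\underline\alpha}(f)\IN_{\underline\alpha}(g)$ established in \cref{cor:IN_mult}. Throughout, I will repeatedly use that it suffices to check each containment on the generators of the left-hand ideal, since the right-hand side is already an ideal.

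\textbf{First line.} The inclusion $\IN_{\underline\alpha}(I)\subseteq\IN_{\underline\alpha}(\sqrt I)$ is immediate from monotonicity, as $I\subseteq\sqrt I$. For $\IN_{\underline\alpha}(\sqrt I)\subseteq\sqrt{\IN_{\underline\alpha}(I)}$, I would take a generator $\IN_{\underline\alpha}(f)$ with $f\in\sqrt I$, pick $k$ with $f^k\in I$, and apply \cref{cor:IN_mult} iteratively to obtain
\[
\IN_{\underline\alpha}(f)^k=\IN_{\underline\alpha}(f^k)\in\IN_{\underline\alpha}(I),
\]
so $\IN_{\underline\alpha}(f)\in\sqrt{\IN_{\underline\alpha}(I)}$; since the latter is an ideal containing every generator of $\IN_{\underline\alpha}(\sqrt I)$, the inclusion follows.

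\textbf{Second line.} For $\IN_{\underline\alpha}(I_1)\cdot\IN_{\underline\alpha}(I_2)\subseteq\IN_{\underline\alpha}(I_1\cap I_2)$, the product ideal is generated by elements of the form $\IN_{\underline\alpha}(f)\IN_{\underline\alpha}(g)$ with $f\in I_1$ and $g\in I_2$; by \cref{cor:IN_mult} these equal $\IN_{\underline\alpha}(fg)$, and since $fg\in I_1I_2\subseteq I_1\cap I_2$ they lie in $\IN_{\underline\alpha}(I_1\cap I_2)$. For $\IN_{\underline\alpha}(I_1\cap I_2)\subseteq\IN_{\underline\alpha}(I_1)\cap\IN_{\underline\alpha}(I_2)$, applying monotonicity to each of $I_1\cap I_2\subseteq I_j$ shows $\IN_{\underline\alpha}(I_1\cap I_2)\subseteq\IN_{\underline\alpha}(I_j)$ for $j=1,2$.

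\textbf{Expected obstacle.} There is no real obstacle here: the argument is entirely formal once the multiplicativity of $\IN_{\underline\alpha}$ and the description of $\IN_{\underline\alpha}(I)$ as the ideal generated by initial forms of elements of $I$ are in hand. The only delicate point worth stating carefully is that $\IN_{\underline\alpha}(I)$ is \emph{not} the set $\{\IN_{\underline\alpha}(f):f\in I\}$ but the ideal it generates, so each step should reduce to checking containment on generators and then invoking the fact that the target is already closed under $\K[X_1,\ldots,X_n]$-linear combinations (or, for the radical, under taking roots).
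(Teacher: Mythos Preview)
Your proposal is correct and follows essentially the same approach as the paper: both arguments derive the outer inclusions from monotonicity of $\IN_{\underline\alpha}(-)$ under ideal inclusion and the middle two from the multiplicativity of $\IN_{\underline\alpha}$ (\cref{cor:IN_mult}). Your write-up is slightly more detailed in making explicit the reduction to generators, but the logic is identical.
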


\begin{proof}
The first and last inclusions follow easily from the fact that if $I\subseteq J$, then
$\IN_{\underline{\alpha}}(I)\subseteq\IN_{\underline{\alpha}}(J)$.
Consider the second inclusion and take $F=\IN_{\underline{\alpha}}(f)
\in\IN_{\underline{\alpha}}(\sqrt{I})$, with $f\in\sqrt{I}$. Then $f^N\in I$ for some
$N\in\N,\,N\geqslant1$. Thus $F^N=\IN_{\underline{\alpha}}(f^N)
\in\IN_{\underline{\alpha}}(I)$, so that $F\in\sqrt{\IN_{\underline{\alpha}}(I)}$.
The third inclusion comes from the multiplicativity of $\IN_{\underline{\alpha}}$.
\end{proof}

\begin{rke}
It may very well be that $\IN_{\underline{\alpha}}(I)$ is not a radical ideal, even if
$I$ is a radical ideal. Consider for instance $f=x_1^N-1\in\K[x_1,\ldots,x_n]$, where
$\mathrm{char}(\K)\nmid N$ and $I=\langle f\rangle$. Set $\alpha>0$ and
$\underline{\alpha}=(\alpha,\ldots,\alpha)\in\Gamma_\nu^n$. Then clearly $I$ is a radical
ideal, yet $\IN_{\underline{\alpha}}(I)=\langle\IN_{\underline{\alpha}}(f)\rangle=
\langle\IN_{\underline{\alpha}}(x_1)^N\rangle$ is not.
\end{rke}

\begin{ex}\label{ex:principal}
For principal ideals $I=\langle f\rangle$, it is clear that for any $\alpha,\:
\IN_{\underline{\alpha}}(I)=\langle\IN_{\underline{\alpha}}(f)\rangle$.
\end{ex}

For an ideal $\mathcal{I}$ of $\gr_\nu(\K)[X_1,\ldots,X_n]$ homogeneous for its
$\underline{\alpha}=(\alpha,\ldots,\alpha)$ weighting, define
$\mathbb{V}_\alpha(\mathcal{I}):=\{Z\in g_\alpha^n:\:F(Z)=0,\text{ for all }F\in\mathcal{I}\}$.
We can formulate the tropical lifting theorem for phase tropicalizations.

\begin{thm}\label{thm:phase_trop}
For any affine algebraic set $X=\mathbb{V}(I)\subset\K^n$
\[\forall\alpha\in\Gamma_\nu,\quad \Init_\nu(X)_{\alpha}=
\{\alpha\}\times\mathbb{V}_\alpha(\IN_{\underline{\alpha}}(I))),
\quad\mathrm{where}\quad\underline{\alpha}=(\alpha,\ldots,\alpha).\]
\end{thm}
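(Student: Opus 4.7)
The plan is to establish the two inclusions separately. The first is essentially formal and follows from the functoriality package developed above, while the reverse is the genuine lifting content of the theorem.

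For the inclusion $\Init_\nu(X)_\alpha \subseteq \{\alpha\} \times \mathbb{V}_\alpha(\IN_{\underline{\alpha}}(I))$, I would start from a $z \in X$ with $V_\nu(z) = \alpha$ and apply \cref{thm:funct} to the evaluation morphism $\mathrm{ev}_z \colon \K[x_1,\ldots,x_n] \to \K$. The inequality $\nu \circ \mathrm{ev}_z \leq \nu_{\underline{\alpha}}$ is immediate from $V_\nu(z) = \alpha$ and the ultrametric inequality applied term by term. By \cref{lm:substitute}, the induced graded morphism $\gr(\mathrm{ev}_z)$ sends each variable $\init_{\nu_{\underline{\alpha}}}(x_i)$ either to $\init_\nu(z_i)$ when $\nu(z_i) = \alpha$, or to $0$ when $\nu(z_i) < \alpha$, which are precisely the coordinates of $\Theta_\alpha(\Init_\nu(z))$. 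Combining this with the identity $\init_{\nu_{\underline{\alpha}}}(f) = \IN_{\underline{\alpha}}(f)(\init_{\nu_{\underline{\alpha}}}(x_1),\ldots,\init_{\nu_{\underline{\alpha}}}(x_n))$ of \cref{prop:graded_monomial} and the vanishing $\gr(\mathrm{ev}_z)(\init_{\nu_{\underline{\alpha}}}(f)) = \init_\nu(f(z)) = 0$ for $f \in I$, one obtains $\IN_{\underline{\alpha}}(f)(\Theta_\alpha(\Init_\nu(z))) = 0$.

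For the reverse inclusion, given $\theta \in g_\alpha^n$ with $\IN_{\underline{\alpha}}(f)(\theta) = 0$ for every $f \in I$, I would first fix a splitting of $(\K,\nu)$ supplied by \cref{prop:split_exist}, so that $\theta$ is represented by explicit data $(\alpha, A)$ with $A \in \kappa_\nu^n$. A naive first approximation $z^{(0)}$ defined by $z^{(0)}_i = a_i t^\alpha$ with $\overline{a_i} = A_i$ already satisfies $\Theta_\alpha(\Init_\nu(z^{(0)})) = \theta$; the assumption on $\theta$ is precisely that the $\nu_{\underline{\alpha}}$-leading coefficient of each $f(z^{(0)})$ vanishes, hence $\nu(f(z^{(0)})) < \nu_{\underline{\alpha}}(f)$ for every $f \in I$. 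The plan is then to iteratively correct $z^{(k)}$ by perturbations of strictly smaller $V_\nu$, at each stage preserving both $V_\nu(z^{(k)}) = \alpha$ and $\Theta_\alpha(\Init_\nu(z^{(k)})) = \theta$, and to produce a well-ordered limit lying in $X$.

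The main obstacle is carrying out the correction step when $\theta$ fails to be a smooth point of the initial variety, which prevents a direct appeal to Hensel's lemma. My expected route is a transfinite induction on the valuation support: each correction amounts to a Kapranov-type lifting problem for a strictly lower-order initial ideal over $\kappa_\nu$, and the well-ordered structure of the Hahn-series support is what guarantees the process produces an element of $\K^n$ in the limit. A backup strategy, should the direct approach become unwieldy, is to reduce to the hypersurface case by combining \cref{lm:INIT} and primary decomposition (keeping track, via the sandwich inclusions there, that $\mathbb{V}_\alpha$ of $\IN_{\underline{\alpha}}(I)$ is unchanged when one passes to radicals or to prime components), and then to perform a linear change of coordinates via \cref{lm:sub} that places $\theta$ in a chart where all its coordinates are nonzero, reducing to the classical toric Kapranov theorem. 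In either route, some form of algebraic closure or henselianity of $\K$, together with algebraic closure of $\kappa_\nu$, is what makes each inductive step solvable.
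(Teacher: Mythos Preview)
Your forward inclusion is correct and is exactly the paper's argument, phrased through the evaluation morphism.

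For the reverse inclusion, your \emph{backup} strategy is the paper's actual proof, modulo one slip: primary decomposition and \cref{lm:INIT} reduce to \emph{prime} ideals, not to hypersurfaces. With $I$ prime, the paper performs the triangular change of variables $y_i=x_i$ for $i\le m$ and $y_i=x_i+x_1$ for $i>m$ (an instance of \cref{ex:triangular}), which via \cref{lm:sub} and \cref{lm:Gr_sub} carries $\theta$ to a point with all coordinates nonzero, and then invokes the toric Kapranov theorem \cite[Prop.~3.2.11]{MS}. So your backup route lands on the paper's proof once you drop the word ``hypersurface''.

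Your \emph{primary} route, the transfinite Hensel-type correction, is not what the paper does, and as stated it has the gap you yourself name. At a singular point of $\mathbb{V}_\alpha(\IN_{\underline{\alpha}}(I))$ the correction step you describe---``a Kapranov-type lifting problem for a strictly lower-order initial ideal''---is circular: it is another instance of the theorem you are proving, and there is no mechanism ensuring the defect valuation strictly drops at each stage for an arbitrary ideal. The paper's alternative proof (\cref{app:alt_proof}), which is independent of \cite{MS}, addresses exactly this by first settling the hypersurface case directly (where one variable can be solved for) and then reducing the general prime case to hypersurfaces via a refined Noether normalization, rather than by iterating on $I$ itself.
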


\begin{proof}
Consider $z=(z_1,\ldots,z_n)\in X$ with $V_\nu(z)=\alpha$ and $F=\IN_{\underline{\alpha}}(f)$
where $f\in I$. We are clearly in the case (1) of \cref{lm:sub} with $m=0$ and $g_i=z_i$,
thus
\[\IN_{\underline{\alpha}}(f)(\init_\nu(z_1),\ldots,\init_\nu(z_n))=0.\]
Having taken arbitrary $z\in X$ and $f\in I$, we have that $\Init_\nu(X)_\alpha\subset
\{\alpha\}\times\mathbb{V}_\alpha(\IN_{\underline{\alpha}}(I)))$.

The reverse inclusion is more tedious and we proceed in steps.
First we reduce to the case of $I$ being prime. By \cref{lm:INIT} we can replace the ideal
$I$ by its radical $\sqrt{I}$. Indeed $X=\mathbb{V}(\sqrt{I})$ and
\[\mathbb{V}_\alpha(\IN_{\underline{\alpha}}(I))=
\mathbb{V}_\alpha\left(\sqrt{\IN_{\underline{\alpha}}(I)}\right)=
\mathbb{V}_\alpha\left(\IN_{\underline{\alpha}}(\sqrt{I})\right).\]
We henceforth assume $I$ to be radical. Thus we can write $I=\bigcap_{k=1}^N\p_k$,
where the $\p_k$ are prime. Thus $X=\bigcup_{k=1}^N\mathbb{V}(\p_k)$, where the
$X_k=\mathbb{V}(\p_k)$ are the irreducible components of $X$ and so $\Init_\nu(X)=
\bigcup_{k=1}^N\Init_\nu(X_k)$. Furthermore, by \cref{lm:INIT} we also have
$\mathbb{V}_\alpha(\IN_{\underline{\alpha}}(I))=
\bigcup_{k=1}^N\mathbb{V}_\alpha(\IN_{\underline{\alpha}}(\p_k))$. Thus it suffices to show the
theorem for the $\p_k$. Without loss of generality we can thus assume $I$ to be prime,
or $X$ an irreducible affine scheme.

Consider any $\theta=(\theta_1,\ldots,\theta_n)\in
\mathbb{V}_\alpha(\IN_{\underline{\alpha}}(I))$.
Up to a permutation of variables we can assume that $\theta_{m+1}=\ldots=\theta_n=0$ and that
$\theta_1,\ldots,\theta_m$ are non-zero. We set the following change of variables
\[\begin{array}{rclrcl}
    y_1&=&x_1 & y_{m+1}&=&x_{m+1}+x_1 \\
    & \vdots & & & \vdots& \\
    y_m&=&x_m & y_n&=&x_n+x_1.
\end{array}\]
They define an isomorphism $\Phi:\K[y_1,\ldots,y_n]\to\K[x_1,\ldots,x_n]$.
It is clear that $\Phi$ satisfies the conditions of \cref{lm:sub}. Thus $\Phi$ yields an
isomorphism of graded algebras
\[\gr(\Phi):\gr_\nu(\K)[Y_1,\ldots,Y_n]\to\gr_\nu(\K)[X_1,\ldots,X_n]\]
with the grading that gives $X_i$ and $Y_i$ weight $\alpha$. Furthermore we have
\begin{equation}\label{eq:IN}
\Gr(\phi)(\mathbb{V}_\alpha(\IN_{\underline{\alpha}}(I))
=\mathbb{V}_\alpha(\gr_\nu(\Phi^{-1})(\IN_{\underline{\alpha}}(I)) \\
=\mathbb{V}_\alpha(\IN_{\underline{\alpha}}(\Phi^{-1}(I)).
\end{equation}
The ring morphism $\Phi$ induces a corresponding linear isomorphism $\phi:\K^n\to\K^n$.
Since $\phi$ fits within \cref{ex:triangular}, by \cref{lm:Gr_sub} $\phi$ induces a
bijection
\[\Gr(\phi):\Gr_\nu(\K^n)\xrightarrow{\sim}\Gr_\nu(\K^n)\]
that sends $\theta$ to $\vartheta$, where $\vartheta_i=\theta_i$ for $i=1,\ldots,m$ and
$\vartheta_i=\theta_1$ for $i=m+1,\ldots,n$. Furthermore we have
\begin{equation}\label{eq:GR}
\Gr(\phi)(\Init_\nu(X)) = \Init_\nu(\phi(X)).
\end{equation}
Furthermore, since $X=\mathbb{V}(I)$, we have $\phi(X)=\mathbb{V}(\Phi^{-1}(I))$.
Combining this last fact with \eqref{eq:IN} and \eqref{eq:GR}, we can assume that all the
entries of $\theta$ are non-zero. However this situation is already established in
\cite[Prop. 3.2.11, p. 108]{MS}. One thus obtains a point $z\in X\cap\mathbb{T}^n$
such that $\Init_\nu(z)=\theta\in\Init_\nu(X)$.
\end{proof}

\begin{rke}
One can transpose the above results to the set-up of \cite[Ch. 2]{MS} as follows.
For any $f=\sum_{u\in U}c_ux^u\in\K[x_1,\ldots,x_n]$ define
\[R_{\underline{\alpha}}(f):=\sum_{u\in U_0}\overline{c_ut^{-\nu(c_u)}}X^u
\in\kappa_\nu[X_1,\ldots,X_n],\quad
U_0:=\{u\in U:\,\nu(c_u)+u\cdot\underline{\alpha}=\nu_{\underline{\alpha}}(f)\}\]
and define $R_{\underline{\alpha}}(I)$ as the ideal
in $\kappa_\nu[X_1,\ldots,X_n]$ generated by $\{R_{\underline{\alpha}}(f):\,f\in I\}$.
At first glance the ideals structure $\kappa_\nu[X_1,\ldots,X_n]$ seems simpler than that of
$\gr_\nu(\K)[X_1,\ldots,X_n]$. However, $\gr_\nu(\K)$ is a saturated graded algebra, \ie
every homogeneous element is invertible. Furthermore we observe that we have an identity
\[t^{\nu_{\underline{\alpha}}(f)}R_{\underline{\alpha}}(f)(X_1,\ldots,X_n)=
\IN_{\underline{\alpha}}(f)(t^{\alpha_1}X_1,\ldots,t^{\alpha_n}X_n),\]
for any $f\in\K[x_1,\ldots,x_n]$. Thus the $\kappa_\nu$-algebra morphism
\[\kappa_\nu[X_1,\ldots,X_n]\to\gr_\nu(\K)[X_1,\ldots,X_n],\:
F(X_1,\ldots,X_n)\mapsto F(t^{\alpha_1}X_1,\ldots,t^{\alpha_n}X_n),\]
induces a bijection between their ideals of type $R_{\underline{\alpha}}(I)$ and
ideals of type $\IN_{\underline{\alpha}}(I)$, for the same ideal $I$.
\end{rke}

\begin{rke}\label{rke:dimension}
We continue the previous remark. We identify $g_\alpha$ with $\kappa_\nu$ via
$\init_\nu(a)\mapsto\overline{ay^{-1}}$ for any element $y\in\K$ such that $\nu(y)=\alpha$.
Using this identification and the $\kappa_\nu$-algebra morphism
\[\kappa_\nu[X_1,\ldots,X_n]\to\gr_\nu(\K)[X_1,\ldots,X_n],\:
F(X_1,\ldots,X_n)\mapsto F(yX_1,\ldots,yX_n),\]
we see that $\mathbb{V}_\alpha(\IN_{\underline{\alpha}}(I))$ can be identified with the
$\kappa_\nu$ algebraic set $\mathbb{V}(R_{\underline{\alpha}}(I))\subset\kappa_\nu^n$.
Thus $\Init_\nu(X)_\alpha$ can be seen as an algebraic set. We assume $I$ to be a radical
ideal. Thus its associated primes are all of equal dimension $d=\dim(X)$.
By \cref{lm:INIT} we can thus deduce that $\IN_{\underline{\alpha}}$ and
$R_{\underline{\alpha}}(I)$ are also radical and by \cite[Lemma 2.4.12, p. 70]{MS} we can
deduce that $R_{\underline{\alpha}}$ is of Krull dimension $d$. Thus $\Init_\nu(X)_\alpha$ is
an algebraic set of dimension $d$.
\end{rke}

\section{Level structure of phase tropicalizations}\label{sec:layered}

In this section we restrict to rank one valuations. We can thus assume that $\Gamma=\R$.
We do this in order to use universal Gröbner bases of an ideal. It is a theory
established only for rank one valuations, although a similar theory for valuations
of any finite rank should not be difficult to conceive. For instance one would need to
replace polyhedra with lex-polyhedra.

Our goal in the following is to study what $\Init_\nu(X)$ looks like when fibered over
$\Gamma$. Our main result could be likened to a Morse-Bott theorem for tropicalizations.

\begin{prop}\label{prop:crit_values}
Fix an ideal $I$ of $\K[x_1,\ldots,x_n]$. There is a finite set of values
$\beta_0<\ldots<\beta_r$ in $\Gamma$ such that $\forall\alpha\in(\beta_{i},\beta_{i+1})$
the initial ideal $\IN_{\underline{\alpha}}(I)$ is constant and homogeneous.
\end{prop}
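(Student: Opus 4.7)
The plan is to invoke the existence of a finite universal Gröbner basis for $I$ over the valued field $(\K,\nu)$, as developed in \cite[Ch.~2]{MS} for rank-one valuations (which is why restricting to $\Gamma=\R$ at the beginning of the section is essential). Fix such a basis $G=\{g_1,\ldots,g_s\}\subset I$, with the property that for every weight $w\in\R^n$,
\[\IN_w(I)=\langle\IN_w(g_1),\ldots,\IN_w(g_s)\rangle.\]
In particular, for every $\alpha\in\R$ and $\underline{\alpha}=(\alpha,\ldots,\alpha)$,
\[\IN_{\underline{\alpha}}(I)=\langle\IN_{\underline{\alpha}}(g_1),\ldots,
\IN_{\underline{\alpha}}(g_s)\rangle.\]
This reduces the problem of describing $\IN_{\underline{\alpha}}(f)$ for all $f\in I$ to describing it only for the finitely many generators~$g_j$.

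Next I would analyze each generator individually. Writing $g_j=\sum_u c_u^{(j)}x^u$, the function
\[\alpha\longmapsto\nu_{\underline{\alpha}}(g_j)
=\max_u\bigl(\nu(c_u^{(j)})+\alpha|u|\bigr)\]
is the pointwise maximum of finitely many affine functions of $\alpha$, hence piecewise linear, convex, and continuous. The set of ``active'' exponents $u$ (those realizing the maximum) changes only at the finitely many values of $\alpha$ for which two terms corresponding to distinct total degrees $|u|\neq|v|$ come into equality, namely $\alpha=(\nu(c_u^{(j)})-\nu(c_v^{(j)}))/(|v|-|u|)$. Call these the \emph{critical values of $g_j$}. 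On every open interval between consecutive critical values of $g_j$, the active support consists entirely of monomials of a single fixed total degree, so $\IN_{\underline{\alpha}}(g_j)$ is simultaneously constant in $\alpha$ and homogeneous.

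Finally, let $\beta_0<\cdots<\beta_r$ be the (finite, ordered) union of the critical values of $g_1,\ldots,g_s$. On each open interval $(\beta_i,\beta_{i+1})$ every $\IN_{\underline{\alpha}}(g_j)$ is at the same time constant and homogeneous, hence the ideal $\IN_{\underline{\alpha}}(I)$ generated by these $s$ polynomials is constant and homogeneous throughout that interval. The main obstacle is justifying the use of a universal Gröbner basis in the present valued-field setting, and in particular the identity $\IN_{\underline{\alpha}}(I)=\langle\IN_{\underline{\alpha}}(g_j)\rangle$ for the monomial valuations $\nu_{\underline{\alpha}}$; this is precisely what the rank-one theory of Gröbner bases for ideals in $\K[x_1,\ldots,x_n]$ from \cite[Ch.~2]{MS} delivers, and it is also where the restriction to $\Gamma=\R$ is used (an analogous statement for higher-rank valuations would require replacing polyhedra by lex-polyhedra, as mentioned in the paragraph preceding the proposition).
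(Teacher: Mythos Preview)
Your overall strategy matches the paper's: reduce to finitely many generators via a universal Gröbner basis, then compute the finitely many $\alpha$ at which two terms of different total degree can tie. The analysis of each $g_j$ and the resulting critical values $\alpha=(\nu(c_u^{(j)})-\nu(c_v^{(j)}))/(|v|-|u|)$ is exactly what the paper does.

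There is, however, one technical point you gloss over. The universal Gröbner basis result you invoke, \cite[Cor.~2.5.11]{MS}, is stated and proved for \emph{homogeneous} ideals; the Gr\"obner complex theory in that chapter does not directly yield a finite universal Gr\"obner basis for an arbitrary (inhomogeneous) ideal $I$. The paper handles this by first passing to the homogenization $I^{\mathrm{hom}}\subset\K[x_0,x_1,\ldots,x_n]$, taking a universal Gr\"obner basis $\{G_1,\ldots,G_s\}$ of $I^{\mathrm{hom}}$, and then checking (following the proof of \cite[Prop.~2.6.1]{MS}) that
\[\IN_{(0,\underline{\alpha})}(\tilde f)\big|_{X_0=1}=\IN_{\underline{\alpha}}(f),\]
so that setting $g_j=G_j(1,x_1,\ldots,x_n)$ gives
$\IN_{\underline{\alpha}}(I)=\langle\IN_{\underline{\alpha}}(g_1),\ldots,\IN_{\underline{\alpha}}(g_s)\rangle$ for every $\alpha$. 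Your sentence ``this is precisely what the rank-one theory \ldots\ from \cite[Ch.~2]{MS} delivers'' is therefore not quite accurate without this homogenization step; you should either insert it or cite \cite[Prop.~2.6.1]{MS} explicitly for the inhomogeneous case. Once that is done, the rest of your argument is correct and coincides with the paper's.
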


\begin{defn}
We call the values $\beta_i$, the \textit{critical levels} of $I$.
\end{defn}

\begin{proof}
We first consider the homogenization $I^{\mathrm{hom}}$ of $I$. It is an ideal of
$\K[x_0,x_1,\ldots,x_n]$. By \cite[Cor. 2.5.11, p. 80]{MS} it has a universal Gröbner
basis $\{G_1,\ldots,G_s\}$.

For any vector $u=(u_1,\ldots,u_n)\in\R^n$ write $|u|=\sum_{i=1}^nu_i$.
The following argument is already present in the proof of \cite[Prop. 2.6.1]{MS}.
Consider $f=\sum_{u\in U}c_ux^u\in I$ and $\tilde{f}=\sum_{u\in U}c_ux^ux_0^{j_u}\in
I^{\mathrm{hom}}$ its homogenization, so that $\deg(f)=\max_{v\in U,\,c_v\neq0}|v|$
and $j_u=\deg(f)-|u|$. Thus
\[\IN_{(0,\underline{\alpha})}(\tilde{f})=\sum_{u\in U_0}\init_\nu(c_u)X^uX_0^{j_u}\]
where
\[U_0=\{u\in U:\:\nu(c_u)+\alpha|u|=\nu_{\underline{\alpha}}(f)\}.\]
Finally
\[\IN_{(0,\underline{\alpha})}(\tilde{f})|_{X_0=1}=\IN_{\underline{\alpha}}(f)\]
and so $\IN_{\underline{\alpha}}(I)$ is the image of $\IN_{(0,\underline{\alpha})}
(I^{\mathrm{hom}})$ in $\K[x_1,\ldots,x_n]$ obtained by setting $X_0=1$. If we set
$g_j=G_j(1,x_1,\ldots,x_n)$ we thus have $\IN_{\underline{\alpha}}(I)=
\big\langle\IN_{\underline{\alpha}}(g_1),\ldots,\IN_{\underline{\alpha}}(g_s)\big\rangle$
for all $\alpha$.

For all $j=1,\ldots,s$ set $g_j=\sum_{u\in U_j}c_u^jx^u$. For
$\IN_{\underline{\alpha}}(g_j)$ to be homogeneous, a sufficient condition
\footnote{This condition might be far from necessary.} is that all terms
$c_u^jx^u,\:u\in U_j$ of distinct weight $|u|$ be of distinct value under
$\nu_{\underline{\alpha}}$. For any $u,v\in U_j,\:|u|\neq|v|$ we have
\begin{align*}
\nu_{\underline{\alpha}}(c_u^jx^u)=\nu_{\underline{\alpha}}(c_v^jx^v) 
&\iff\nu(c_u^j)+\underline{\alpha}\cdot u=\nu(c_v^j)+\underline{\alpha}\cdot v \\
&\iff\nu(c_u^j/c_v^j)=\underline{\alpha}\cdot(v-u)=\alpha|v-u|\\
&\iff\alpha=\frac{\nu(c_u^j/c_v^j)}{|v|-|u|}.
\end{align*}
We set $V_j:=\left\{\frac{\nu(c_u^j/c_v^j)}{|v|-|u|}:\:u,v\in U_j,|u|\neq|v|\right\}$
and $V=\cup_{j=1}^sV_j$. Denote $V=\{\beta_0,\ldots,\beta_r\}$. For any
$\alpha\in\Gamma\setminus V$ we have $\IN_{\underline{\alpha}}(I)$ generated by
homogeneous elements. Between two consecutive $\beta_i$, the terms $c_u^jx^u$ for
which the $\nu_{\underline{\alpha}}$ value is minimal does not change,
thus the ideal $\IN_{\underline{\alpha}}(I)$ does not vary. Observe that the set
$V$ might contain a value $\beta_i$ for which $\IN_{\underline{\alpha}}(I)=
\IN_{\underline{\beta_i}}(I)$ for all $\alpha\in(\beta_{i-1},\beta_{i+1})$.
We may simply eliminate those values and thus obtain our critical levels.
\end{proof}

As an application, we retrieve the general picture of double-hyperbolic tropicalizations of
surfaces \cref{thm:surfaces}. We henceforth work with the field $\K$ of Hahn series and thus
identify $\Gr_\nu(\K^n)$ with $\R\times\C^n$. We recall briefly that for a variety
$X\subset\mathrm{SL}_2(\K)$, its double-hyperbolic tropicalization is
$\widehat{\varkappa}(\widetilde{\mathrm{VAL}}(X))$, where
$\widehat{\varkappa}(C)=(CC^*,C^*C)$ for a matrix $C\in\mathrm{SL}_2(\C)$.
The layered structure that is stated in \cref{thm:surfaces} stems from the general picture
of the valuation tropicalization $\Init_\nu(X)$ since it dominates the
double-hyperbolic tropicalization. First of all, $\mathrm{SL}_2(\K)$ is cut out by a single
equation $\det-1$, thus by \cref{ex:principal}, $\Init_\nu(\mathrm{SL}_2(\K))=\bigsqcup_\alpha
\mathbb{V}(\IN_{\underline{\alpha}}(\det-1))$. We clearly have
\[\IN_{\underline{\alpha}}(\det-1)=\left\{\begin{array}{ll}
    1 & \text{if } \alpha<0\\
    \det-1 & \text{if } \alpha=0\\
    \det & \text{if } \alpha>0.
\end{array}\right.\]
By directly using the method in the proof of \cref{prop:crit_values}, we see that
$\mathrm{SL}_2(\K)$ has only one critical level, $\beta=0$ and
\[\Init_\nu(\mathrm{SL}_2(\K))=\Big(\{0\}\times\mathrm{SL}_2(\C)\Big)
\cup\Big((0,+\infty)\times\{\det=0\}\Big).\]
Since $X\subset\mathrm{SL}_2(\K)$, the critical levels $0=\beta_0<\ldots<\beta_r$ of $X$
are non-negative. Let $I$ be the defining ideal of $X$. We know that the ideal
$\IN_{\underline{\alpha}}(I)$ is constant and homogeneous for
$\alpha\in(\beta_{i},\beta_{i+1})$. By \cref{rke:dimension}, this amounts to saying that its
double-hyperbolic tropicalization will be the union of cylinders $(\beta_{i},\beta_{i+1})\times
C_i$, for some complex projective curve $C_i$ and the fibers over the critical levels. By the
same remark, $\IN_{\underline{\beta_i}}(I)$ is an inhomogeneous ideal of $\C[x_1,x_2,x_3,x_4]$
of height $2$. Thus when projectivizing $\{\det=0\}$ to $Q_2(\C)$, the image of
$\mathbb{V}(\IN_{\underline{\beta_i}}(I))$ dominates $Q_2(\C)$. This amounts to saying that
above the positive critical levels, the double-hyperbolic tropicalizations are of the form
$\{\beta_i\}\times Q_2(\C)$. Finally, the fiber over the vertex at the level $\beta=0$
collapses to a point in the double-hyperbolic tropicalization.

We end our investigation, with a theorem about valuative tropicalizations of general surfaces
inside $\mathrm{SL}_2(\K)$. For any polynomial $f=\sum_{u\in U}c_ux^u\in\K[x_1,\ldots,x_n]$, we
define $\mathrm{Trop}(f)\cdot\alpha:=\max\{\nu(c_u)+\alpha|u|:\:u\in U,\:c_u\neq0\}$, the
\textit{tropical polynomial} of $f$. The graph of the function $\alpha\in\R\mapsto
\mathrm{Trop}(f)\cdot\alpha$ is a continuous, piecewise linear set of $\R^2$. The first
coordinates of the bends of this graph are the \textit{tropical roots} of $\mathrm{Trop}(f)$.

Consider an irreducible surface $X\subset\mathrm{SL}_2(\K)$. By Klein's theorem it is a
complete intersections, \ie it is cut out by one extra equation $f\in\K[x_1,\ldots,x_4]$.
It may happen that for some $\alpha>0,\:\IN_{\underline{\alpha}}(f)=G\cdot\det$. In this
case, we consider the largest $\ell$ for which such an $\alpha$ sits inside
$[\beta_\ell,\beta_{\ell+1})$ and lift $G$ to $g\in\K[x_1,\ldots,x_4]$. We replace $f$
with $f-g(\det-1)$. It may still happen that $\IN_{\underline{\alpha}}(f)=H\cdot\det$ but
for $\alpha$ inside some $[\beta_{\ell'},\beta_{\ell'+1})$ with $\ell'<\ell$. After a finite
number of adjustments we thus obtain $f$ such that $\forall\alpha>0,\:
\IN_{\underline{\alpha}}(f)\notin\langle\det\rangle$. We call such $f$ $\det$ free.

Finally observe that for any $u\in U$ the functions $\alpha\mapsto\mathrm{Trop}(c_ux^u)
\cdot\alpha=\nu(c_u)+\alpha|u|$ are either constant, for $u=0$, or increasing for $|u|>0$.
Thus the graph of $\alpha\mapsto\mathrm{Trop}(f)\cdot\alpha=
\max_{u\in U}\mathrm{Trop}(c_ux^u)\cdot\alpha$ is a non-decreasing function.

One can further simplify the expression of $f$ to a reduced function $\widetilde{f}$ such
that $\mathrm{Trop}(f)=\mathrm{Trop}(\widetilde{f})$. Consider $f=\sum_{u\in U}c_ux^u$, with
$c_u\neq0\iff u\in U$ and set
\[\widetilde{U}:=\{u\in U:\:\exists\alpha\in\R,\:
\mathrm{Trop}(f)\cdot\alpha=\mathrm{Trop}(c_ux^u)=\nu(c_u)+\alpha|u|\}.\]
We define $\widetilde{f}=\sum_{u\in\widetilde{U}}c_ux^u$ and furthermore decompose it in
homogeneous components $\widetilde{f}=\widetilde{f}_0+\cdots+\widetilde{f}_d$.
Since $f$ and $\widetilde{f}$ have the same tropical polynomial, they have the same tropical
roots. Furthermore, it is easy to check that they also have the same initial forms:
$\forall\alpha,\:\IN_{\underline{\alpha}}(f)=\IN_{\underline{\alpha}}(\widetilde{f})$.
It may happen that $\widetilde{f}_i=0$, for which we set
$\IN_{\underline{\alpha}}(\widetilde{f}_i)=0$ for all $\alpha$.
It is clear to see how the different $\IN_{\underline{\alpha}}(\widetilde{f})$ arise.
Call $\beta_1<\ldots<\beta_r$ the tropical roots of $\mathrm{Trop}(\widetilde{f})$ and set
$\beta_0=0$. There are degrees $0\leqslant d_0\leqslant\ldots\leqslant d_r$ such that
\begin{align*}
\IN_{\underline{\alpha}}(\widetilde{f}) &=
    \IN_{\underline{\alpha}}(\widetilde{f}_{d_i}),\: \forall\alpha\in(\beta_i,\beta_{i+1})
    \text{ and} \\
\IN_{\underline{\beta_i}}(\widetilde{f}) &=
    \IN_{\underline{\alpha}}(\widetilde{f}_{d_{i-1}})+\cdots+
    \IN_{\underline{\beta_i}}(\widetilde{f}_{d_i}),\:i=1,\ldots,r.
\end{align*}
One can simplify the expression of $\widetilde{f}$ even further. If we are working over the
field of Hahn series $\K$ one can replace every $c_ux^u,\:u\in\widetilde{U}$ with $\widehat{c_u}t^{\nu(c_u)}$ with $\widehat{c_u}\in\C$ and $c_u=\widehat{c_u}t^{\nu(c_u)}+
o(t^{\nu(c_u)})$. We denote the ensuing polynomial $\widehat f$. Since we have reduced $f$ to
$\widetilde{f}$, it is clear that $\widehat{f}=t^{\gamma_0}\widehat{f}_0+\cdots+
t^{\gamma_d}\widehat{f}_d$ with $\widehat{f}_i\in\C[x_1,\ldots,x_n]$. The following result
shows that valuative tropicalizations of general irreducible surfaces inside
$\mathrm{SL}_2(\K)$ are the same as those given by such simplified expressions.

\begin{prop}\label{prop:principal}
For general surfaces $X$, the positive critical levels of $X$ are the tropical roots of
$\mathrm{Trop}(f)$ and for all $\alpha>0,\:\Init_\nu(X)=
\mathbb{V}(\det,\IN_{\underline{\alpha}}(f))$.
\end{prop}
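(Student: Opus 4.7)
The plan is to identify the defining ideal of $X$ as $I = \langle f, \det - 1\rangle$ via Klein's theorem, and then invoke \cref{thm:phase_trop} so that the task reduces to proving the ideal equality $\IN_{\underline{\alpha}}(I) = \langle \det, \IN_{\underline{\alpha}}(f)\rangle$ for every $\alpha > 0$. The inclusion $\supseteq$ is immediate: for $\alpha > 0$ the polynomial $\det - 1$ has a constant term of value $0$ and degree-two monomials of value $2\alpha > 0$, so \cref{rke:basics}(4) gives $\IN_{\underline{\alpha}}(\det - 1) = \det$, and $\IN_{\underline{\alpha}}(f)$ lies in $\IN_{\underline{\alpha}}(I)$ by definition.

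For the reverse inclusion I fix $h \in I$ with a presentation $h = gf + k(\det - 1)$ and dissect $\IN_{\underline{\alpha}}(h)$. Using \cref{prop:in_compute}(2,5) together with the multiplicativity supplied by \cref{cor:IN_mult}, if $\nu_{\underline{\alpha}}(gf) \neq \nu_{\underline{\alpha}}(k(\det - 1))$ or if the two leading initial forms do not cancel, then $\IN_{\underline{\alpha}}(h)$ is immediately a $\gr_\nu(\K)$-combination of $\IN_{\underline{\alpha}}(f)$ and $\det$ and therefore lies in $\langle \det, \IN_{\underline{\alpha}}(f)\rangle$. The delicate case is cancellation, i.e.\ $\IN_{\underline{\alpha}}(g)\,\IN_{\underline{\alpha}}(f) + \IN_{\underline{\alpha}}(k)\,\det = 0$ in the UFD $\gr_\nu(\K)[X_1,\ldots,X_4]$. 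Since $\det$ is irreducible and $f$ is $\det$-free, this forces $\det \mid \IN_{\underline{\alpha}}(g)$, so we may write $\IN_{\underline{\alpha}}(g) = \det \cdot Q$ and lift to some $q \in \K[x_1,\ldots,x_4]$ with $\IN_{\underline{\alpha}}(q) = Q$. Replacing $(g,k)$ by $(g - q(\det - 1),\, k + qf)$ preserves the presentation of $h$ while strictly decreasing both $\nu_{\underline{\alpha}}(gf)$ and $\nu_{\underline{\alpha}}(k(\det - 1))$.

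The principal obstacle is to show that this descent terminates. The value group $\R$ is not well-ordered, so I would pin the argument down by observing that all values encountered during the descent lie in the finitely generated $\Z$-submodule of $\R$ spanned by $\alpha$ and by the (finitely many) values of coefficients in $f$, $g$, $k$; moreover they are bounded below by $\nu_{\underline{\alpha}}(h)$. This confines the descent to a discrete and bounded subset, so after finitely many steps we land in the non-cancelling case and conclude $\IN_{\underline{\alpha}}(h) \in \langle \det, \IN_{\underline{\alpha}}(f)\rangle$. Genericity of the surface $X$ is used precisely to guarantee that no pathological configuration obstructs the lifting of $Q$ or the termination of the reduction.

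For the first half of the proposition, I would combine the now-established identity $\IN_{\underline{\alpha}}(I) = \langle \det, \IN_{\underline{\alpha}}(f)\rangle$ with the observation that $\det$ is homogeneous of degree $2$, so the homogeneity of $\IN_{\underline{\alpha}}(I)$ is controlled entirely by $\IN_{\underline{\alpha}}(f)$. The analysis of $\widetilde{f} = \widetilde{f}_0 + \cdots + \widetilde{f}_d$ recalled immediately before the proposition shows that $\IN_{\underline{\alpha}}(f)$ coincides with the homogeneous polynomial $\IN_{\underline{\alpha}}(\widetilde{f}_{d_i})$ on each open interval $(\beta_i, \beta_{i+1})$ and equals an inhomogeneous sum $\IN_{\underline{\beta_i}}(\widetilde{f}_{d_{i-1}}) + \cdots + \IN_{\underline{\beta_i}}(\widetilde{f}_{d_i})$ at each tropical root. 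Invoking the characterization of critical levels from \cref{prop:crit_values} then identifies the positive critical levels of $I$ with the tropical roots of $\mathrm{Trop}(f)$, completing the proof.
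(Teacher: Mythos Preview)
Your strategy departs from the paper's in an essential way. The paper does \emph{not} attempt the ideal equality $\IN_{\underline{\alpha}}(I)=\langle\det,\IN_{\underline{\alpha}}(f)\rangle$. Instead it argues at the level of varieties: for generic $f$ the set $\mathbb{V}(\det,\IN_{\underline{\alpha}}(f))\subset\C^4$ is irreducible of dimension~$2$ (there are only finitely many possible $\IN_{\underline{\alpha}}(f)$, so this is a finite genericity condition), while $\Init_\nu(X)_\alpha=\mathbb{V}_\alpha(\IN_{\underline{\alpha}}(I))$ is a closed subvariety of it, also of dimension~$2$ by \cref{rke:dimension}. An irreducible variety cannot properly contain a closed subvariety of the same dimension, so the two coincide. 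This sidesteps any analysis of syzygies.

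Your descent argument has a genuine gap in the termination step. You assert that the values produced by the reduction lie in a finitely generated $\Z$-submodule of $\R$ and are bounded below, and conclude that this set is discrete. But a finitely generated $\Z$-submodule of $\R$ need not be discrete: already $\Z+\alpha\Z$ is dense whenever $\alpha$ is irrational, and there is no reason the level $\alpha$ and the coefficient valuations of $f,g,k$ should be rationally commensurable. So boundedness below does not force termination, and your appeal to genericity does not address this (generic coefficients make the situation worse, not better). Even if one could repair the termination, you would be proving a stronger statement than the proposition requires, since $\Init_\nu(X)=\mathbb{V}(\det,\IN_{\underline{\alpha}}(f))$ is an equality of \emph{varieties}, not of ideals; the paper's dimension--irreducibility argument is both shorter and sufficient.
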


\begin{proof}
Consider a decomposition into homogeneous components $f=f_0+\cdots+f_d$.
For general coefficients $c_u$, it is clear that $f$ is $\det$ free and all possible sets
\[\mathbb{V}(\det,\IN_{\underline{\alpha}}(f))\subset\C^4,\:\alpha\in\Gamma,\]
are irreducible of dimension $2$ in $\C[x_1,\ldots,x_4]$. The main argument here is the fact
that there are only a finite amount of possible initial forms $\IN_{\underline{\alpha}}(f)$.
Since $\Init_\nu(X)$ is clearly included in $\mathbb{V}(\det,\IN_{\underline{\alpha}}(f))$
and is of dimension $2$, thus the two sets are equal.
\end{proof}

\begin{rke}
The above results can be generalized to other valued fields $\K$ equipped with a section.
For instance one could consider the field of Hahn series with real coefficients or the
field of $p$-adic numbers $\Q_p$.
\end{rke}

\appendix

\section{Another approach to lifting}\label{app:alt_proof}

In this appendix, we briefly sketch out an alternative proof of \cref{thm:phase_trop}, which
is independent of \cite[Prop. 3.2.11, p. 108]{MS}. We recall the statement: for any affine
algebraic set $X=\mathbb{V}(I)\subset\K^n$ and any level $\alpha\in\Gamma_\nu$, the set
$\Init_\nu(X)_{\alpha}=\{\Init_\nu(x)\::\:x\in X,\:V(x)=\alpha\}$ coincides with
$\{\alpha\}\times\mathbb{V}_\alpha(\IN_{\underline{\alpha}}(I)))$

\begin{itemize}
\item \textit{Step 0:} We show the inclusion $\Init_\nu(X)_\alpha\subset\{\alpha\}\times
\mathbb{V}_\alpha(\IN_{\underline{\alpha}}(I)))$. This has been done already in the proof of
\cref{thm:phase_trop}. Thus we now focus on the reverse inclusion. Fix
$(\init_\nu(z_1),\ldots,\init_\nu(z_n))\in\mathbb{V}_\alpha(\IN_{\underline{\alpha}}(I)))$
\item \textit{Step 1:} We show \cref{thm:phase_trop} when $X$ is hypersurface case.
The basic strategy consists in looking for solutions of the form $x_i=z_i+y_i$ where
$\nu(y_i)<\alpha$. We leave out the details of this process.
\item \textit{Step 2:} We reduce to the irreducible or prime ideal case (just like in the
first part of the proof of \cref{thm:phase_trop}).
\item \textit{Step 3:} We prove a very subtle Noether normalization theorem. We assume our
base field $\K$ to be infinite. Set a variety $X_0\hookrightarrow\A^n_\K$ of dimension $d$,
codimension $c=n-d$ and fix a point $x\in X_0$. One can find a general linear projection
$\pi:\A^n_\K\to\A^d_\K$, such that the induced morphism $X_0\to\pi(X_0)=X_d\subset\A^d_\K$
is finite. We can refine this process and show that $\pi$ can be factored into two linear
projections $\pi=p\circ q$ with $p:\A^n_\K\to\A^{d+1}_\K$ and $q:\A^{d+1}_\K\to\A^d_\K$, such
that $q^{-1}(q(x))=\{x\}$. In this situation $p(X_0)=X_{d-1}\subset\A^{d+1}_\K$ is a
hypersurface. Both $X_{d-1}$ and $X_d$ are irreducible varieties. This is an affine version
of the refinement of the projective version of Noether's normalization that can be found in
\cite[Proposition (2.32), p. 38]{Mu}.
\item \textit{Step 4:} We apply the previous step to $X_0=X$. Set $I_0,I_{d-1},I_d$ the
respective ideals of $X_0,X_{d-1}$ and $X_d$. By functoriality we obtain the following
\[\begin{tikzcd}
\A^{n}_{\gr_\nu(\K)} \ar[r,"\Init_\nu(p)"] & \A^{d-1}_{\gr_\nu(\K)}
    \ar[r,"\Init_\nu(q)"] & \A^{d}_{\gr_\nu(\K)} \\
\mathbb{V}_\alpha(\IN_{\underline{\alpha}}(I_0))
    \ar[u,closed] \ar[r,"\widetilde{\Init_\nu(p)}"] &
        \mathbb{V}_\alpha(\IN_{\underline{\alpha}}(I_{d-1}))
        \ar[u,closed] \ar[r,"\widetilde{\Init_\nu(q)}"] &
            \mathbb{V}_\alpha(\IN_{\underline{\alpha}}(I_d)) \ar[u,equal] \\
\Init_\nu(X_0) \ar[u,closed] \ar[r,"\widehat{\Init_\nu(p)}"] &
    \Init_\nu(X_{d-1}) \ar[u,equal] \ar[r,"\widehat{\Init_\nu(q)}"] &
        \Init_\nu(X_d) \ar[u,equal]
\end{tikzcd}\]
It is important to maintain the genericity conditions on the projections $\Init_\nu(p)$ and
$\Init_\nu(q)$ by translating them into genericity conditions on $p$ and $q$.
\item \textit{Step 5:} From the hypersurface case, we can conclude that
$\mathbb{V}_\alpha(\IN_{\underline{\alpha}}(I_{d-1}))=\Init_\nu(X_{d-1})$. We conclude via a
simple diagram chase, considering that $\widehat{\Init_\nu(p)}$ is surjective and step 4.
\end{itemize}

\section{\texorpdfstring{$\mathrm{SL}_2$}{SL2} valuation and
the diffeomorphism}\label{app:diffeo}


Let $\tilde R_h\colon\mathrm{SL}_2(\C)\to\mathrm{SL}_2(\C)$ be a diffeomorphism defined in terms
of the polar decomposition by $PU\mapsto P^hU$. Consider an $\mathrm{SL}_2(\C)$-valued series $A_t$
converging for large real $t$. We want to compute the limit of $\tilde R_{\log(t)^{-1}}(A_t)$
as $t\to+\infty$.

First, we will find the unitary part of the limit. For this, we need to know the
expression of the spherical coamoeba map, \ie of $U$ for a matrix
$C=PU\in\mathrm{SL}_2(\C)$. Note that $P^2=CC^*$ and $\varkappa^\circ{C}=U=P^{-1}C$,
thus we are left with expressing the inverse of the square root of $P^2$.
This can be done by the Cayley-Hamilton formula:
\[P^2-\mathrm{tr}(P)P+1=0.\]
Thus, $P^{-1}=(P^{-2}+1)/\mathrm{tr}(P)=[\![(C^*)^{-1}C^{-1}+1]\!]$.
Therefore, $\varkappa^{\circ}(C)=[\![C+(C^*)^{-1}]\!]$.

To compute the limit of the unitary part of $A_t$, we need to rethink the operation
of taking the inverse of a unimodular matrix -- namely, $C^{-1}=\mathrm{tr}(C)-C$
for $C$ with $\det(C)=1$, again by Cayley-Hamilton formula. The expression
$C^{adj}=\mathrm{tr}(C)-C$ extends to all two-by-two matrices as a linear map known as
\textit{adjugate}, satisfying the core property $CC^{adj}=C^{adj}C=\det(C)C$.

Thus, the limit of the unitary part of $A_t$ expanded asymptotically as
$t^{\alpha}B+o(t^{\alpha})$, with $B$ being a complex matrix, is easy to compute:
\[\lim_{t\to\infty}\varkappa^\circ(A_t)=
\lim_{t\to\infty}[\![t^{\alpha}(B+(B^*)^{adj})+o(t^{\alpha})]\!]=
[\![B+(B^*)^{adj}]\!]\in\mathrm{SU}(2).\]

Note that in the asymptotic expansion $A_t=t^{\alpha}B+o(t^{\alpha})$ the exponent
$\alpha$ cannot be negative since $1=det(A_t)=t^{2\alpha}det(B)+o(t^{2\alpha})$.
Moreover, if $\alpha=0$ then $\det(B)=1$ and $\det(B)=0$ otherwise.
In the case $\alpha=0$, we see that $A_t$ itself converges to $B$,
and thus the limit of $R_{\log(t)^{-1}}(A_t)$ is the same as of
$R_{\log(t)^{-1}}(B)$, which is again $[\![B+(B^*)^{adj}]\!]$. 

Therefore, we are left with the case $\alpha>0$, when $A_t$ itself does not converge
in the vector space of two-by-two matrices. To compute the limit of the
$\log(t)^{-1}$ power of the Hermitian part $P(t)\in\mathbb{H}^3$ of $A_t$,
what we do is polar decomposing $\mathbb{H}^3$ itself with respect to the matrix
$1$ being the center of the decomposition. Namely, first we compute the limit of the
distance from $(P(t))^{(log(t))^{-1}}$ to the identity matrix. Note that such a distance
is computed as the logarithm of the top eigenvalue of $A_tA^*(t)$ divided by $2\log(t)$.
Since $A_tA^*(t)$ is unimodular and diverges, this eigenvalue is asymptotically equal
to $\mathrm{tr}(A_tA^*(t))=t^{2\alpha}\mathrm{tr}(BB^*)+o(t^{\alpha})$. Thus,
the distance from the Hermitian part of $R_{\log(t)^{-1}}(A_t)$ to $I_2$ converges
to $\alpha>0$.

This was the radial component of the limit in $\mathbb{H}^3$. To find the angular component
in $S^2=\partial\mathbb{H}^{3}$ consisting of rank one hermitian matrices considered
up to a scalar multiple, we simply compute the limit of $A_tA^*(t)$ normalized
by its trace (which is necessarily non-vanishing). This gives the point
$BB^*/\mathrm{tr}(BB^*)\in S^2$ as the limit of the angular part.

Now we have to combine this angular part $BB^*/\mathrm{tr}(BB^*)$ with the radial part
$\alpha>0$ to get a point in $\mathbb{H}^3$. What we need to find is such a Hermitian
unimodular matrix which has the same eigenvectors as $BB^*$, and the eigenvalue of
the vector spanning the image of $BB^*$ should be $e^\alpha$ (this is due to the distance
$\alpha$ to the identity matrix). The desired expression is
$P=e^{-\alpha}I_2+(e^\alpha-e^{-\alpha})BB^*/\mathrm{tr}(BB^*)$.

Combining it with the unitary part, we get
\begin{align*}
\lim_{t\to\infty}(R_{\log(t)^{-1}})(A_t)
&=(e^{-\alpha}I_2+(e^\alpha-e^{-\alpha})BB^*/\mathrm{tr}(BB^*))[\![B+(B^*)^{adj}]\!] \\
&=[\![e^{-\alpha}B+(e^\alpha-e^{-\alpha})BB^*B/\mathrm{tr}(BB^*)\\
&+e^{-\alpha}(B^*)^{adj}+(e^\alpha-e^{-\alpha})BB^*(B^*)^{adj}/\mathrm{tr}(BB^*)]\!].
\end{align*}
This expression can be simplified since $B$ is a rank-one matrix, \ie the last term is
vanishing due to $B^*(B^*)^{adj}=0$, and $BB^*B/\mathrm{tr}(BB^*)=B$ because if we write
$B=vw$ where $v$ is a column vector and $w$ is a row vector, $\mathrm{tr}(BB^*)=tr(vww^*v^*)
=\mathrm{tr}(v^*vww^*)=\mathrm{tr}(|v|^2|w|^2)=|v|^2|w|^2$ and
$BB^*B=vww^*v^*vw=|v|^2|w|^2vw$. Therefore, the final formula for
$A_t=t^\alpha B+o(t^\alpha)$ is
\[\lim_{t\rightarrow\infty}(R_{\log(t)^{-1}})(A_t)=
[\![e^{\alpha}B+e^{-\alpha}(B^*)^{adj}]\!]\in\mathrm{SL}_2(\C),\]
which is also compatible with the case $\alpha=0$ that we considered separately. 

The final remark here is that the expression $[\![e^{\alpha}B+e^{-\alpha}(B^*)^{adj}]\!]$
for $\alpha>0$ (and so $\det(B)=0$) contains exactly the same information as the pair
$(\alpha,[B]_{\mathbb{R}_{>0}})$, where $[B]_{\mathbb{R}_{>0}}$ denotes the class of matrices
equal to $B$ up to a positive real multiple. More formally, there is a diffeomorphism
$\Psi:(0,+\infty)\times\mathcal{S}\to\mathrm{SL}_2(\C)/\mathrm{SU}(2)$ given by
\[(\alpha,[B]_{\mathbb{R}_{>0}})\mapsto [\![e^{\alpha}B+e^{-\alpha}(B^*)^{adj}]\!].\]

It is possible to write an explicit (and quite cumbersome) formula for its inverse.
However, from the geometric perspective, the invertibility of the above map is not hard
to see. Namely, a matrix $C\in\mathrm{SL}_2(\C)\backslash\mathrm{SU}(2)$, is distinct from its
inverse-conjugate $(C^*)^{-1}$. Take a 2-plane $H$ in the space of matrices passing through
$0,\,C$ and $(C^*)^{-1}$. The projectivisation of this plane intersects the quadric surface
$\det=0$ in two conjugate points $[B]_{\mathbb{C}^*}$ and $[(B^*)^{adj}]_{\mathbb{C}^*}$.
Thus, we have an alternative basis in the plane $H$, \ie the two matrices $B_0$ and
$(B_0^*)^{adj}$, and our initial point $C\in H$ can be written uniquely as
$C=aB_0+b(B_0^*)^{adj}$ for a pair of non-zero complex numbers $a$ and $b$.
Note that since $\det(C)=1$ the arguments of $a$ and $b$ are opposite.
Now we simply solve for $\alpha\in\mathbb{R}^*,\,c\in\mathbb{C}^*$, where $B=cB_0$
and $r>0$ the following equation
\[aB_0+b(B_0^*)^{adj}=r(e^{\alpha}cB+e^{-\alpha}\overline{c} (B^*)^{adj}),\]
which gives the inverse of $C$ under $\Psi$, where we may switch the roles of $B$
and $(B^*)^{adj}$ if $\alpha$ has turned out to be negative (in that case we also replace
$\alpha$ with $-\alpha$).

\end{document}